\def\supp{\operatorname{supp}}
\def\dim{\operatorname{dim}}
\def\id{\operatorname{id}}
\def\sup{\operatorname{sup}}
\def\max{\operatorname{max}}
\def\min{\operatorname{min}}
\def\id{\operatorname{id}}
\def\supp{\operatorname{supp}}
\def\Ext{\operatorname{Ext}}
\def\ad{\operatorname{Ad}}
\def\supp{\operatorname{supp}}
\def\dim{\operatorname{dim}}
\def\id{\operatorname{id}}
\def\sup{\operatorname{sup}}
\def\max{\operatorname{max}}
\def\min{\operatorname{min}}
\def\mul{\operatorname{m}}
\def\Leb{\operatorname{Leb}}
\def\ent{\operatorname{h}}
\def\cl{\operatorname{cl}}
\def\diam{\operatorname{diam}}
\def\supp{\operatorname{supp}}
\def\Lip{\operatorname{Lip}}
\def\Kt{\operatorname{K}}
\def\KKt{\operatorname{KK}}
\def\Ext{\operatorname{Ext}}
\def\Li{\operatorname{Li}}
\def\rank{\operatorname{rank}}
\newcommand{\gs}{G^s(Q)}
\newcommand{\gu}{G^u(P)}
\def\Ddots{\mathinner{\mkern1mu\raise\p@
\vbox{\kern7\p@\hbox{.}}\mkern2mu
\raise4\p@\hbox{.}\mkern2mu\raise7\p@\hbox{.}\mkern1mu}}
\newcommand{\ep}{\varepsilon}
\tikzstyle{vertex}=[circle]
\tikzstyle{goto}=[->,shorten >=1pt,>=stealth,semithick]
\newtheorem{thm}{Theorem}[section]
\newtheorem{cor}[thm]{Corollary}
\newtheorem{lemma}[thm]{Lemma}
\newtheorem{prop}[thm]{Proposition}
\newtheorem*{thm2}{Main Theorem}
\theoremstyle{definition}
\newtheorem{definition}[thm]{Definition}
\newtheorem{notation}[thm]{Notation}
\theoremstyle{remark}
\newtheorem{remark}[thm]{Remark}
\newtheorem*{Acknowledgements}{Acknowledgements}
\begin{document}

\begin{abstract}
A fundamental ingredient in the noncommutative geometry program is the notion of $\KKt$-duality, often called $\Kt$-theoretic Poincar\'{e} duality, that generalises Spanier-Whitehead duality. In this paper we construct a $\theta$-summable Fredholm module that represents the fundamental class in $\KKt$-duality between the stable and unstable Ruelle algebras of a Smale space. To find such a representative, we construct dynamical partitions of unity on the Smale space with highly controlled Lipschitz constants. This requires a generalisation of Bowen's Markov partitions. Along with an aperiodic point-sampling technique we produce a noncommutative analogue of Whitney's embedding theorem, leading to the Fredholm module.
\end{abstract}

\title[A geometric fundamental class for Smale spaces]{A geometric representative for the fundamental class in KK-duality of Smale spaces}

\author[D.M. Gerontogiannis]{Dimitris Michail Gerontogiannis}
\address{Leiden University, Niels Bohrweg 1, 2333 CA
Leiden, The Netherlands}
\email{d.m.gerontogiannis@math.leidenuniv.nl}

\author[M.F. Whittaker]{Michael F. Whittaker}
\address{University of Glasgow, Glasgow Q12 8QQ, United Kingdom}
\email{Mike.Whittaker@glasgow.ac.uk\\Joachim.Zacharias@glasgow.ac.uk}

\author[J. Zacharias]{Joachim Zacharias}
\email{Joachim.Zacharias@glasgow.ac.uk}
\keywords{Smale spaces, fundamental class, $\Kt$-homology, Fredholm module.}
\subjclass[2020]{37D20, 19K33, 58B34 (Primary); 37B10 (Secondary)}

\maketitle

\section{Introduction}\label{intro}
In \cite[Chapter 6]{Connes}, Connes asserts that the notion of manifold in noncommutative geometry will be reached only after an understanding of duality in $\KKt$-theory. This generalises the situation of a compact spin$^c$-manifold $M$ where its fundamental class in $\Kt$-homology, that implements Poincar{\'e} duality in $\KKt$-theory, has a concrete Fredholm module representation. Specifically, it is given by the phase of a Dirac operator acting on the Hilbert space of $L^2$-spinors on $M$. Spanier-Whitehead duality in $\KKt$-theory, here referred to as $\KKt$-duality, provides a general framework to this end.

With this in mind, we bring the stable and unstable Ruelle algebras of a Smale space a step closer to being realised as noncommutative manifolds. Specifically, we build a $\theta$-summable Fredholm module representation of their $\KKt$-duality fundamental class in \cite{KPW}. The heart of our method lies in approximating Smale spaces through dynamical open covers, introduced in \cite{Gero}, that are built from Markov partitions. This representation opens the window to Lefschetz fixed point theorems for Ruelle algebras, as in \cite{EmersonLef,EEK}.

Smale spaces, defined by Ruelle \cite{Ruelle}, extend Smale's non-wandering sets of Axiom A systems \cite{Smale} to the topological setting. For a Smale space $(X,\varphi)$, Putnam \cite{Putnam_algebras} defined simple, nuclear, purely infinite groupoid $C^*$-algebras; the stable and unstable Ruelle algebras $\mathcal{R}^s(X,\varphi)$ and $\mathcal{R}^u(X,\varphi)$. The zero dimensional Smale spaces are the subshifts of finite type and their Ruelle algebras are stabilised Cuntz-Krieger algebras, thus we think of the Ruelle algebras as higher dimensional Cuntz-Krieger algebras. 

Further, the stable and unstable Ruelle algebras were shown to be $\KKt$-dual \cite{KPW}. The fundamental class $\Delta$ was given by an extension $\tau_{\Delta}$ of $\mathcal{R}^s(X,\varphi) \otimes \mathcal{R}^u(X,\varphi)$ by the compact operators. The class $\Delta$ abstractly has Fredholm module representatives due to nuclearity, Choi-Effros lifting, and Stinespring dilation. However, concrete representatives are difficult to define, particularly those that arise geometrically and have summability properties. In this paper we find such a representative.

\begin{thm2}[c.f. Theorem \ref{thm:KK_1_lift_Ruelle}]\label{introthm:KK_1_lift_Ruelle}
Let $(X,\varphi)$ be an irreducible Smale space. Then there is an odd Fredholm module over $\mathcal{R}^s(X,\varphi)\otimes \mathcal{R}^u(X,\varphi)$ that is explicitly associated with the underlying dynamics and represents $\Delta$. Moreover, there are dense Lipschitz groupoid algebras $\Lambda_s\subset \mathcal{R}^s(X,\varphi),\, \Lambda_u\subset \mathcal{R}^u(X,\varphi)$ such that the Fredholm module is $\theta$-summable on $\Lambda_s\otimes_{\text{alg}}\Lambda_u$.


\end{thm2}

Before explaining the theorem we present some context around $\KKt$-duality. This duality is a noncommutative generalisation of the classical Spanier-Whitehead duality which relates the homology of a complex with the cohomology of a dual complex. Following \cite{KS}, the separable $C^*$-algebras $A$ and $DA$ are $\KKt$-dual if there are duality classes $\mu \in \KKt_i(A \otimes DA,\mathbb C),\, \nu \in \KKt_i(\mathbb C,A \otimes DA)$ with Kasparov products 
\begin{equation}\label{Spanier-Whitehead_def}
\nu \otimes_{A} \mu =(- 1)^i1_{DA} \in KK_0(DA,DA)\quad \text{and}\quad \nu \otimes_{DA} \mu =1_{A} \in KK_0(A,A).
\end{equation}
Kasparov slant products with the duality classes produce isomorphisms between the $\Kt$-theory of $A$ and the $\Kt$-homology of the dual algebra $DA$, and vice versa. Further details appear in Subsection \ref{sec:SW-duality}. $\KKt$-duality was first introduced by Connes \cite{Connes} and Kasparov \cite{Kasparov2} as $\Kt$-theoretic Poincar{\'e} duality. As discussed in \cite{KP, KPW, KS} though, Poincar{\'e} duality relates the homology and cohomology of the same manifold given it is orientable, while the duality we consider here extends Spanier-Whitehead duality to the noncommutative framework.

The key to finding a Fredholm module representative of the fundamental class $\Delta$ is to use the underlying geometry of the Smale space. Bowen's Markov partitions \cite{Bowen2,Bowen3,Bowen4} provide a combinatorial model of arbitrary precision for Smale spaces and are essential in their dimension theory \cite{Barreira} and homology \cite{Putnam_Book}. However, Markov partitions do not fit directly in the geometric aspect of Smale spaces as they are closed covers with zero Lebesgue covering numbers, unless the Smale space is zero-dimensional. Nevertheless, one can inflate them to open covers realising the same combinatorial data, as in \cite[Theorem 6.2]{Gero}, that approximate the metric through refinement. These covers admit Lipschitz partitions of unity with controlled Lipschitz constants and yield a dense aperiodic sample of homoclinic points. Then, inspired by Whitney's Embedding Theorem \cite{Whitney} we produce an isometry. Remarkably, when the metric exhibits self-similarity, the isometry $V$ compresses a representation $\rho$ of $\mathcal{R}^s(X,\varphi)\otimes \mathcal{R}^u(X,\varphi)$ back to the extension $\tau_{\Delta}$, up to compact operators and unitary equivalence. The pair $(\rho,V)$ then gives a Fredholm module representative of $\Delta$ that is $\theta$-summable on an algebraic tensor product of Lipschitz groupoid algebras. The general problem reduces to the self-similar case as the metric is self-similar up to topological conjugacy, see Subsection \ref{Sec:Smale}. Interestingly, the first author has used \cite[Theorem 6.2]{Gero} to prove Ahlfor's regularity of Smale spaces and to study $\Kt$-homological finiteness of the Ruelle algebras. However, our main theorem is the raison d'\^etre for this approximation tool. We believe the first author's innovation has many more applications still to come.

A Fredholm module representation has also been given by Goffeng and Mesland \cite{GM} for the fundamental class \cite{KP} of Cuntz-Krieger algebras, by considering Fock spaces. Echterhoff, Emerson and Kim \cite{EEK2} obtained such a result for crossed products of complete Riemannian manifolds by countable groups acting isometrically, co-compactly and properly. However, Ruelle algebras are not associated to Fock spaces in general, and Smale spaces are typically fractal rather than smooth manifolds. A concrete representation of the $\Kt$-theory duality class of an irrational rotation algebra is given by Duwenig and Emerson in \cite{DE}. Further, the work of Rennie, Robertson and Sims \cite{RRS} on crossed products by $\mathbb Z$ with coefficient algebras satisfying $\KKt$-duality does not apply to our situation, since the coefficient algebras of $\mathcal{R}^s(X,\varphi)$ and $\mathcal{R}^u(X,\varphi)$ do not have $\KKt$-duals in general, see \cite[Subsection 5.1.2]{Gero_Thesis}. Finally, it would be interesting if our result was related to a Fredholm module representation of the fundamental extension class of Emerson \cite{EmersonDuality} for crossed products of hyperbolic group boundary actions, as in some cases these are strongly Morita equivalent to Ruelle algebras \cite{LacaSp}.

The structure of this paper is as follows. Section \ref{prelims} has several parts outlining the objects and tools required in the paper. Specifically, Subsection \ref{Sec:Smale} defines Smale spaces along with standard properties. This includes a key observation of Artigue \cite{Artigue} that all Smale spaces are topologically conjugate to self-similar Smale spaces. Subsection \ref{sec:Markovpartitions} outlines the first author's $\delta$-enlarged Markov partitions, which are key to our construction. We then introduce Smale space $C^*$-algebras in Subsection \ref{sec:Smale_C_alg}, including the stable and unstable algebras as well as their crossed products by $\mathbb Z$, called Ruelle algebras. Subsection \ref{sec:K-theoretic_prelim} briefly introduces $\KKt$-theory along with the notion of smoothness for an extension and of summability for a Fredholm module. In the final two sections of the preliminaries we introduce $\KKt$-duality and the specific duality classes for Ruelle algebras, focussing on the fundamental class.

The remainder of the paper describes the lift of the $\KKt$-duality fundamental class to the $\theta$-summable Fredholm module picture. Section \ref{sec:Towards_a_KK_1_lift} defines an untwisted representation of the tensor product $\mathcal{R}^s(X,\varphi)\otimes \mathcal{R}^u(X,\varphi)$ on an enlarged Hilbert space. Moreover, we symmetrise the dynamics of the representation in order to facilitate the construction of our compressing isometry later on. To aid the reader's understanding we include Subsection \ref{sec:intuition_isometry} that outlines the ideas behind our compressing isometry. In this section we explain how we aim to prove the main theorem and give intuition towards our analogue of Whitney's embedding.

Section \ref{sec:pou_Markov} uses a refining sequence of $\delta$-enlarged Markov partitions to construct a sequence of dynamical Lipschitz partitions of unity. Each has highly controlled Lipschitz constants. In combination with a particular choice function from the refining sequence, with values in a dense set of homoclinic points in the Smale space, we are able to construct the compressing isometry $V$ in Section \ref{sec:averaging_isom}. The isometry $V$ is the direct sum of averaging isometries $V_n$ that correspond to certain levels of the refining sequence. Also these isometries satisfy quasi-invariance properties relevant to the $\mathbb Z$-actions in the Ruelle algebras.

In the final Section \ref{sec:DilateKPW}, we define our Fredholm module representative of the fundamental class defining the $\KKt$-duality from \cite{KPW}. Here we prove the main theorem of the paper, Theorem \ref{thm:KK_1_lift_Ruelle}, through a sequence of highly technical lemmas.

\section{Preliminaries}\label{prelims}

In this section we briefly introduce Smale spaces and their $C^*$-algebras. We also recall key results of the first author's extension of Bowen's Markov partitions to refining sequences of well-behaved open covers for Smale spaces \cite{Gero}. Then we outline aspects of $\KKt$-theory needed in this paper and define $\KKt$-duality. Finally, we state the $\KKt$-duality theorem for Ruelle algebras of an irreducible Smale space and quickly recap the construction of the fundamental class from \cite{KPW}.

\subsection{Smale spaces}\label{Sec:Smale}
In this paper we focus on dynamical systems with standard topological recurrence conditions -- specifically, {\em non-wandering}, {\em irreducible} as well as {\em mixing} dynamics, see \cite[Definitions 2.1.3, 2.1.4, and 2.1.5]{Putnam_Book}. Heuristically, a Smale space is a dynamical system with a hyperbolic local product structure. Specifically, we have the following definition, originally due to Ruelle \cite{Ruelle}.

\begin{definition}[{\cite[p.19]{Putnam_Book}, \cite{Ruelle}}]\label{defSmaSpa}
A \emph{Smale space} $(X,\varphi)$ consists of an infinite compact metric space $X$ with metric $d$ along with a homeomorphism $\varphi: X \to X$ such that, there exist constants $ \ep_{X} > 0, \lambda > 1$ and a locally defined bi-continuous bracket map
\[ [\cdot, \cdot]:\{(x,y) \in X \times X : d(x,y) \leq \ep_{X}\} \mapsto X \]
satisfying the bracket axioms:
\begin{itemize}
\item[B1] $\left[ x, x \right] = x$,
\item[B2] $\left[ x, [ y, z] \right] = [ x, z]$,
\item[B3] $\left[ [ x, y], z \right] = [ x,z ]$, and
\item[B4] $\varphi[x, y] = [ \varphi(x), \varphi(y)]$,
\end{itemize}
for any $x, y, z$ in $X$ when both sides are defined. In addition, for $x \in X$ and $ 0 < \ep \leq \ep_{X}$ we can define the local stable and unstable sets
\begin{align*}
X^{s}(x, \ep) & = \{ y \in X : d(x,y) < \ep, [y,x] =x \}, \\ 
X^{u}(x, \ep) & = \{ y \in X : d(x,y) < \ep, [x,y] =x \}
\end{align*}
on which we have the contraction axioms:
\begin{itemize}
\item[C1] For $y,z \in X^s(x,\ep)$ we have $d(\varphi(y),\varphi(z)) \leq \lambda^{-1} d(y,z)$,
\item[C2] For $y,z \in X^u(x,\ep)$ we have $d(\varphi^{-1}(y),\varphi^{-1}(z)) \leq \lambda^{-1} d(y,z)$.
\end{itemize}
\end{definition}

The local product structure is encoded in the bracket axioms. Specifically, for $x\in X$ and $0<\ep \leq \ep_X/2$, it follows that the bracket map 
\begin{equation}\label{eq:bracketmap}
[\cdot, \cdot]:X^u(x,\varepsilon)\times X^s(x,\varepsilon)\to X
\end{equation}
is a homeomorphism onto its image. Moreover, there is some $0<\ep_X'\leq \ep_X/2$ that for $x,y \in X$ with $d(x,y)\leq \ep_X'$ one has $d(x,[x,y]),d(y,[x,y])< \ep_X/2$, and $[x,y]$ is the unique intersection of the local stable set of $x$ and the local unstable set of $y$, as depicted in Figure \ref{Bracket intersection}.
\begin{figure}[ht]
\begin{center}
\begin{tikzpicture}
\tikzstyle{axes}=[]
\begin{scope}[style=axes]
	\draw[<->] (-3,-1) node[left] {$X^s(x,\ep_X/2)$} -- (1,-1);
	\draw[<->] (-1,-3) -- (-1,1) node[above] {$X^u(x,\ep_X/2)$};
	\node at (-1.2,-1.4) {$x$};
	\node at (1.1,-1.4) {$[x,y]$};
	\pgfpathcircle{\pgfpoint{-1cm}{-1cm}} {2pt};
	\pgfpathcircle{\pgfpoint{0.5cm}{-1cm}} {2pt};
	\pgfusepath{fill}
\end{scope}
\begin{scope}[style=axes]
	\draw[<->] (-1.5,0.5) -- (2.5,0.5) node[right] {$X^s(y,\ep_X/2)$};
	\draw[<->] (0.5,-1.5) -- (0.5,2.5) node[above] {$X^u(y,\ep_X/2)$};
	\node at (0.7,0.2) {$y$};
	\node at (-1.6,0.2) {$[y,x]$};
	\pgfpathcircle{\pgfpoint{0.5cm}{0.5cm}} {2pt};
	\pgfpathcircle{\pgfpoint{-1cm}{0.5cm}} {2pt};
	\pgfusepath{fill}
\end{scope}
\end{tikzpicture}
\caption{The bracket map as the intersection of local stable and unstable sets}
\label{Bracket intersection}
\end{center}
\end{figure}
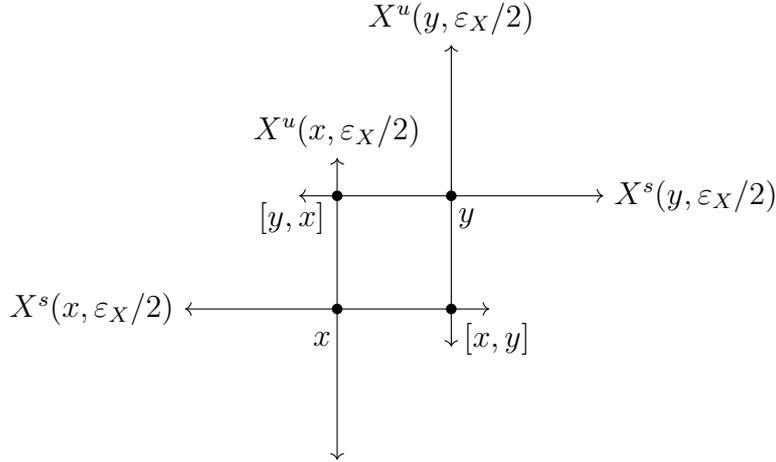

Given a point $x \in X$, global stable and unstable sets $X^s(x)$ and $X^u(x)$ are defined by
\begin{align*}
X^s(x) & = \{y \in X : \lim_{n \rightarrow +\infty} d(\varphi^n(x),\varphi^n(y)) = 0\} \text{ and } \\
X^u(x) & = \{y \in X : \lim_{n \rightarrow +\infty} d(\varphi^{-n}(x),\varphi^{-n}(y)) = 0\}.
\end{align*}
It is straightforward to prove that $X^s(x,\ep) \subset X^s(x)$ and $X^u(x,\ep) \subset X^u(x)$. Further, the open sets $\{X^s(y,\ep) : y \in X^s(x), 0 <\ep \leq \ep_X\}$ form a neighbourhood base for a locally compact and Hausdorff topology on $X^s(x)$. Similarly for $X^u(x)$. Also, the stable and unstable equivalence on $X$ will be denoted by $x \sim_s y$ and $x \sim_u y$. 

Smale spaces were defined by Ruelle to give a topological description of the non-wandering sets of differentiable dynamical systems satisfying Smale's \textit{Axiom A} \cite{Ruelle}. The zero dimensional Smale spaces are exactly the \emph{subshifts of finite type} (SFT) \cite{Putnam_Book}. More recently, Wieler characterised the Smale spaces with zero dimensional stable or unstable sets, as what are now called \textit{Wieler solenoids} \cite{Wieler}. These include Smale spaces associated with William's solenoids, aperiodic substitution tilings, and the limit space of self-similar groups. There are many higher dimensional examples as well, notably the hyperbolic toral automorphisms.

Smale spaces are expansive dynamical systems \cite[Proposition~2.1.9]{Putnam_Book} and hence have finite topological entropy \cite[Theorem~3.2]{Walters}, which we denote by $\ent(\varphi)$. Further, they always have finite Hausdorff dimension \cite[Chapter 7]{Ruelle}. Fractal dimensions of a large class of Smale spaces were calculated in \cite{Gero}.

The following theorem shows that recurrence properties of Smale spaces decompose the space in a rather nice manner.

\begin{thm}[Smale's Decomposition Theorem {\cite[Theorem 2.1.13]{Putnam_Book}}] \label{thm: Smale decomposition}
Assume that the Smale space $(X,\varphi)$ is non-wandering. Then $X$ can be decomposed into a finite disjoint union of clopen, $\varphi$-invariant, irreducible sets $X_0,\ldots , X_{N-1}$. Each of these sets can then be decomposed into a finite disjoint union of clopen sets $X_{i0},\ldots ,X_{iN_i}$ that are cyclically permuted by $\varphi$, and where $\varphi^{N_i+1}|_{X_{ij}}$ is mixing, for every $0\leq j\leq N_i$.
\end{thm}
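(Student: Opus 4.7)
The plan is to follow the classical Smale spectral decomposition argument, adapted to the topological Smale space setting where hyperbolicity is encoded by the bracket axioms B1--B4 and the contraction axioms C1--C2, rather than by smooth stable and unstable foliations.

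First, I would prove an Anosov-style closing lemma to deduce that periodic points are dense in $X$. Given any nonempty open $U \subset X$, the non-wandering hypothesis supplies $n > 0$ and $x \in U$ with $\varphi^n(x) \in U$; after shrinking $U$ we may arrange $d(x, \varphi^n(x)) < \ep_X'$. Setting $p_0 = x$ and iteratively $p_{k+1} = [\varphi^n(p_k), p_k]$, the associativity axioms B2--B3 together with the exponential contraction C1--C2 make $(p_k)$ Cauchy, and the limit $p$ satisfies $\varphi^n(p) = p$ and lies close to $x$. Moreover, for any periodic $q$, forward and backward iterates of $q$-neighbourhoods accumulate along $X^s(q)$ and $X^u(q)$ respectively.

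Next, I would define an equivalence relation on the set of periodic points by declaring $p \sim q$ iff $X^s(p) \cap X^u(q) \neq \emptyset$ and $X^u(p) \cap X^s(q) \neq \emptyset$. Reflexivity and symmetry are immediate; transitivity uses the bracket to manufacture a heteroclinic point in $X^s(p) \cap X^u(r)$ from given heteroclinic points for $(p,q)$ and $(q,r)$, where C1--C2 again guarantee convergence of the bracket iterates. Define the basic sets $X_i$ as the closures of the $\sim$-classes. Using density of periodic points together with the fact that the bracket locally identifies neighbourhoods with products of local stable and unstable pieces (equation \eqref{eq:bracketmap}), one shows that any point in such a closure has an entire local product neighbourhood lying in the same closure; hence each $X_i$ is open as well as closed. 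Compactness of $X$ forces the decomposition $X = X_0 \sqcup \cdots \sqcup X_{N-1}$ to be finite, and $\varphi$-invariance is automatic from B4. Irreducibility of each $X_i$ is read off the defining heteroclinic property, since any two nonempty open subsets contain periodic points that are heteroclinically connected.

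For the mixing refinement of a fixed irreducible piece $X_i$, I would pick a periodic point $p \in X_i$ of minimal period $N_i + 1$ and introduce the finer relation $x \approx y$ iff $X^s(x) \cap X^u(y) \neq \emptyset$; on the irreducible component this has exactly $N_i + 1$ classes, cyclically permuted by $\varphi$. Taking $X_{ij}$ to be the closure of the $\approx$-class containing $\varphi^j(p)$ gives clopen pieces with $\varphi(X_{ij}) = X_{i, j+1 \bmod N_i+1}$. To verify mixing of $\psi := \varphi^{N_i+1}|_{X_{ij}}$, for nonempty open $U,V \subset X_{ij}$ I would choose periodic $q \in U$, $r \in V$; the relation $q \approx r$ supplies a bracketing heteroclinic connection, and iterating $\psi$ expands a local unstable slice through $q$ exponentially while staying inside the $\approx$-class, so by density of periodic points $\psi^m(U) \cap V \neq \emptyset$ for every sufficiently large $m$. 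The main obstacle will be the careful bookkeeping behind the bracket and closing arguments: ensuring that iterated brackets stay in the domain of $[\cdot,\cdot]$, that transitivity of $\sim$ respects the $\ep_X'$ threshold, and that openness of basic sets genuinely follows from density combined with B1--B4 and C1--C2. Once those technicalities are handled, both decompositions reduce to bracket algebra and exponential contraction.
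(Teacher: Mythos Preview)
The paper does not supply its own proof of this theorem; it is stated with a citation to Putnam's monograph and used as background. Your outline follows exactly the classical spectral decomposition argument (density of periodic points via a closing lemma, heteroclinic equivalence classes on periodic points, closures giving clopen basic pieces, then a finer one-sided heteroclinic relation for the cyclic mixing refinement), which is the route taken in the cited source, so there is nothing to compare against in this paper.

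One genuine technical gap in your sketch: the iteration $p_{k+1} = [\varphi^n(p_k), p_k]$ does not produce a periodic point. A fixed point $p$ of $z \mapsto [\varphi^n(z), z]$ only satisfies $p \in X^s(\varphi^n(p), \ep_X)$, i.e.\ $p$ lies on the local stable set of $\varphi^n(p)$; this does not force $\varphi^n(p) = p$. The standard closing argument instead shadows the periodic pseudo-orbit $x, \varphi(x), \ldots, \varphi^{n-1}(x), x, \ldots$ by a true orbit (using the bracket to correct errors step by step in both stable and unstable directions simultaneously), and then invokes expansiveness of $(X,\varphi)$ to conclude the shadowing orbit is itself periodic. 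Equivalently, one sets up a contraction on the product chart $X^u(x,\ep) \times X^s(x,\ep)$ whose fixed point corresponds to a genuine period-$n$ point. Once you replace your iteration by one of these, the remainder of your plan goes through as written.
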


To finish this subsection we define the notion of a self-similar Smale space along with Artigue's Theorem that all Smale spaces admit a self-similar metric. For additional details see \cite[Section 4.5]{Gero}. 

\begin{definition}\label{def:selfsimilarSmalespace}
A Smale space $(X,\varphi)$ with metric $d$ will be called \textit{self-similar} if both $\varphi,\,\varphi^{-1}$ are $\lambda$-Lipschitz with respect to the metric.
\end{definition}

As we can see from the next theorem, combining results of Fathi \cite{Fathi} and Fried \cite{Fried}, Artigue showed there is a plethora of self-similar Smale spaces.

\begin{thm}[\cite{Artigue}]\label{lem:Artiguelemma}
Every Smale space $(X,d,\varphi)$ admits a compatible (self-similar) metric $d'$ so that $(X,d',\varphi)$ is a self-similar Smale space.
\end{thm}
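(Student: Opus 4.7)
The plan is to build an adapted (Lyapunov-type) metric $d'$ from the data of the contraction axioms C1, C2 together with the bracket axioms B1--B4, proceeding in three stages: local stable and unstable Lyapunov pseudo-metrics, a local bracket-product metric, and a global metric obtained by a chain construction. First, on each local stable set I would define
\[ d_s(y,z) := \sum_{n=0}^{\infty} \lambda^n \, d(\varphi^n(y), \varphi^n(z)), \]
which converges by C1. A telescoping computation gives the clean identity $d_s(\varphi(y), \varphi(z)) = \lambda^{-1}(d_s(y,z) - d(y,z))$, so $\varphi$ contracts $d_s$ by exactly $\lambda^{-1}$. Replacing $\varphi$ by $\varphi^{-1}$ and using C2 produces a local unstable metric $d_u$ along which $\varphi^{-1}$ is a $\lambda^{-1}$-contraction. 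One also checks that on $X^s(x,\varepsilon)$ the metric $d_s$ is bi-Lipschitz comparable to $d$ on smaller balls, so the local topology is unchanged; the analogous statement holds for $d_u$.

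Second, I would use the bracket to fuse $d_s$ and $d_u$ into a local metric on $X$. For $x,y$ close enough that $[x,y]$ is defined, set $d'_\text{loc}(x,y) := \max\{d_u(x,[x,y]),\, d_s([x,y],y)\}$. Axiom B4, $\varphi[x,y] = [\varphi(x),\varphi(y)]$, guarantees that $\varphi$ and $\varphi^{-1}$ act diagonally in bracket coordinates, so the local Lipschitz bounds from the previous step upgrade to $d'_\text{loc}(\varphi(x),\varphi(y)) \le \lambda\, d'_\text{loc}(x,y)$ and symmetrically for $\varphi^{-1}$. Axioms B1--B3 are precisely what is needed to make $d'_\text{loc}$ symmetric and to handle the degenerate and associative cases consistently. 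Third, I would globalise by a chain construction
\[ d'(x,y) := \inf \Bigl\{ \sum_{i=0}^{k-1} d'_\text{loc}(x_i,x_{i+1}) : x=x_0,\ldots,x_k=y,\ d(x_i,x_{i+1})\le \varepsilon_X' \Bigr\}. \]
Compactness of $X$ and finiteness of an $\varepsilon_X'$-net show this infimum is finite; applying $\varphi$ (respectively $\varphi^{-1}$) to a near-optimal chain produces a competing chain of total length scaled by at most $\lambda$, giving the desired global Lipschitz bound. The bracket axioms of the metric $d$ transfer verbatim to $d'$ because they are dynamical identities on $X$ and independent of the chosen metric, so $(X,d',\varphi)$ remains a Smale space.

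The main obstacle is the passage from local to global: one must show the chain metric $d'$ is actually a metric (i.e.\ $d'(x,y)>0$ for $x\neq y$) and is compatible with the original topology. Non-degeneracy is where expansivity and the shadowing structure of Smale spaces are essential, as exploited by Fathi and Fried in the references cited just before the theorem; the point is that two distinct points eventually separate under the dynamics, preventing any chain from collapsing their $d'$-distance to zero. Topological compatibility follows because on small scales $d'_\text{loc}$ is sandwiched between $d$ and a power of $d$, so the two metrics define the same uniform structure; since $X$ is compact Hausdorff, coincidence of the neighbourhood bases is enough. Once these points are settled, verifying the four bracket and two contraction axioms for $(X,d',\varphi)$ is a mechanical check using the same diagonal-in-bracket-coordinates argument that gave the Lipschitz bounds.
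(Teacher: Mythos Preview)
The paper does not give a proof of this theorem; it simply cites Artigue, noting that the result combines earlier work of Fathi and Fried. So there is no paper proof to compare against beyond the attribution.

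Your proposal, however, has a genuine gap at the very first step. The series
\[
d_s(y,z)=\sum_{n\ge 0}\lambda^n\,d(\varphi^n(y),\varphi^n(z))
\]
need not converge: axiom C1 only gives $d(\varphi^n(y),\varphi^n(z))\le\lambda^{-n}d(y,z)$, so each term is bounded by $d(y,z)$ but there is no decay, and the sum typically diverges. The standard fix (taking $\mu^n$ with $1<\mu<\lambda$) rescues convergence but does not rescue the argument, because a more serious problem remains: your leafwise Lyapunov metrics only control contraction in the favourable direction. From $d_u(\varphi^{-1}a,\varphi^{-1}b)\le\mu^{-1}d_u(a,b)$ you get $d_u(\varphi a,\varphi b)\ge\mu\,d_u(a,b)$, a \emph{lower} bound on expansion, not the upper bound you need for $\varphi$ to be $\lambda$-Lipschitz in the combined metric. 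The Smale space axioms give no a priori upper bound on how fast $\varphi$ expands along unstable leaves, so your $d'_{\mathrm{loc}}(\varphi x,\varphi y)\le\lambda\,d'_{\mathrm{loc}}(x,y)$ is unjustified. (There is also no argument for the triangle inequality of $d'_{\mathrm{loc}}$, which is not automatic for a bracket-product of this shape.)

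Artigue's route, following Fathi, avoids all of this by working two-sidedly with \emph{decaying} weights: one considers quantities of the form $\sup_{n\in\mathbb Z}\lambda^{-|n|}\min\{d(\varphi^n x,\varphi^n y),c\}$ for a suitable expansivity scale $c$. The inequality $|n\pm 1|\ge|n|-1$ then gives $\lambda$-Lipschitz bounds for both $\varphi$ and $\varphi^{-1}$ for free, and expansivity supplies non-degeneracy. The one-sided growing-weight sums you propose are the wrong primitive here.
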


\subsection{Approximating Smale spaces}\label{sec:Markovpartitions}
Bowen's seminal Theorem \cite[Theorem 12]{Bowen2} asserts that every irreducible Smale space $(X,\varphi)$ admits Markov partitions of arbitrarily small diameter; that is, the space $X$ can be partitioned into closed with non-empty interior subsets called rectangles, that respect the dynamical structure and overlap only on their boundaries.

\begin{definition}
A non-empty subset $R\subset X$ is called a \textit{rectangle} if $\text{diam}(R)\leq \varepsilon_X'$ and $[x,y]\in R$, for any $x,y \in R$.
\end{definition}

Bowen's Theorem leads to the construction of a SFT for which $(X,\varphi)$ is a factor. The factor map is finite-to-one and one-to-one on a dense $G_\delta$-subset of the shift space. Although, Markov partitions provide a combinatorial framework for understanding the topological structure of Smale spaces, a more delicate approach is required for studying their metric structure. In \cite[Theorem 6.2]{Gero}, the first author extended Markov partitions to special open covers that can be used to approximate the metric structure of Smale spaces. Before being more specific, we introduce notation used in \cite[Theorem 6.2]{Gero} that we require here. 

Let $\#S$ denote the cardinality of a finite set $S$. Suppose $(Z,r)$ is a compact metric space and $\mathcal{U}$ is a finite cover of $Z$ such that $\varnothing, Z\not \in \mathcal{U}$. Define
\begin{align*}
\text{diam}(\mathcal{U})&=\max\limits_{U\in \, \mathcal{U}}\text{diam}(U);\\
\mul(\mathcal{U})&= \max \{n: U_{i_1}\cap \ldots \cap U_{i_n}\neq \varnothing \text{ such that } U_{i_j}\neq U_{i_k} \text{ for } j \neq k\};\\
\text{Leb}(\mathcal{U})&= \min\limits_{z\in Z} \max\limits_{U\in \mathcal{U}}r(z,Z\setminus U).
\end{align*}
The last quantity is the Lebesgue covering number of $\mathcal{U}$ and it holds that for every $z\in Z$ there is some $U\in \mathcal{U}$ so that the ball $B(z,\ell)\subset U$, where $\ell=\text{Leb}(\mathcal{U})$.

In order to study dynamical properties on a compact metric space by means of finite approximations we use refining sequences of open covers, first introduced in \cite[Corollary p. 314]{AKM} to study topological entropy.

\begin{definition}\label{def:refining sequence}
Let $(Z,r)$ be a compact metric space. A sequence $(\mathcal{V}_n)_{n\geq 0}$ of finite open covers of $Z$ is called \textit{refining} if, $\mathcal{V}_0=\{Z\}$ and for every $n\geq 0$ any element of $\mathcal{V}_{n+1}$ lies inside some element of $\mathcal{V}_n$ such that
\[
\lim\limits_{n\to \infty}\text{diam}(\mathcal{V}_n)=0.
\]
\end{definition}

A refining sequence of open covers naturally gives rise to an approximation graph \cite[Section 2.4]{Gero}; that is, a rooted graph, with vertices given by the sets in the covers and edges defined by inclusion of the sets in preceding refinements. Its infinite path space approximates the underlying space. In the sequel, it will often be useful to visualise refining sequences as rooted graphs.

Resuming our discussion, \cite[Theorem 6.2]{Gero} is the key tool for understanding the Ahlfors regularity of Smale spaces. The starting point is a Markov partition $\mathcal{R}_1$ with sufficiently small diameter. Inductively, $\mathcal{R}_1$ is then refined to Markov partitions $\mathcal{R}_n$, where each $\mathcal{R}_{n}$ refines $\mathcal{R}_{n-1}$ and $\diam(\mathcal{R}_n)$ goes to zero. The sequence $(\mathcal{R}_n)_{n\in \mathbb N}$ is the main ingredient for deriving Bowen's factor map. However, for our purposes it has one drawback. Specifically, the Lebesgue numbers of $\mathcal{R}_n$ are zero, unless the Smale space is zero-dimensional. To remedy this, we $\delta$-enlarge $\mathcal{R}_1$ to a cover of open rectangles $\mathcal{R}_1^{\delta}$. We note that $\delta>0$ is carefully chosen and is independent of $n\in \mathbb N$. Using induction we then define a refining sequence of covers of open rectangles $\mathcal{R}_n^{\delta}$ that are $\delta$-enlarged versions of $\mathcal{R}_n.$ While we do not go into the specifics of the construction here, \cite[Theorem 6.2]{Gero} is also key to defining a Fredholm module representative of the fundamental class for Smale spaces. We encourage the reader to delve into specifics of the first author's construction in \cite{Gero}. Here we require the following result.

\begin{thm}[{\cite[Theorem 6.2]{Gero}}]\label{thm:theoremgraphSmalespaces}
Suppose $(X,\varphi)$ is an irreducible Smale space and $\mathcal{R}_1$ is a Markov partition with sufficiently small diameter (see \cite[Section 6]{Gero}). Then, there are positive constants $C,c$ and $\theta \leq \varepsilon_X$, as well as a sufficiently small $\delta>0$, so that for every $n\in \mathbb N$,
\begin{enumerate}[(1)]
\item the cover $\mathcal{R}_n^{\delta}$ consists of open rectangles and $\mathcal{R}_{n+1}^{\delta}$ refines $\mathcal{R}_n^{\delta}$;
\item if $R^{\delta}\in \mathcal{R}_n^{\delta}$ and $x,y\in R^{\delta}$, then $d(\varphi^r(x),\varphi^r(y))< \varepsilon_X'$ when $|r|\leq n-1$;
\item  if $|r|\leq n-1$, then the open cover $\varphi^r(\mathcal{R}_n^{\delta})$ refines $\mathcal{R}_{n-|r|}^{\delta}$;
\item $\diam(\mathcal{R}_n^{\delta})\leq \lambda^{-n+1}\theta;$
\item $\mul(\mathcal{R}_n^{\delta})\leq (\#\mathcal{R}_1^{\delta})^2.$
\end{enumerate}
Moreover, for every $\varepsilon \in (0,1)$, there is some $n_0\in \mathbb N$ such that, for $n\geq n_0$, we have
\begin{enumerate}[(6)]
\item $ce^{2(\ent(\varphi)-\varepsilon) n}< \#\mathcal{R}_n^{\delta} <Ce^{2(\ent(\varphi)+\varepsilon) n}.$
\end{enumerate} 
If, in addition, $\varphi, \varphi^{-1}$ are Lipschitz and $\Lambda\coloneqq \max \{\Lip (\varphi),\Lip (\varphi^{-1})\}$, then there is some $\eta\in (0,\theta]$ so that, for every $n\in \mathbb N$, we have
\begin{enumerate}[(7)]
\item $\Leb(\mathcal{R}_n^{\delta})\geq \Lambda^{-n+1}\eta$.
\end{enumerate}
The sequence $(\mathcal{R}_n^{\delta})_{n\geq 0}$ is called a refining sequence of $\delta$-enlarged Markov partitions. 
\end{thm}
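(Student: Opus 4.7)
The plan is to extend Bowen's construction of Markov partitions and then carry out a carefully chosen $\delta$-enlargement that respects the product structure of rectangles. First I would fix a Markov partition $\mathcal{R}_1$ whose diameter is sufficiently small relative to $\varepsilon_X$ and $\varepsilon_X'$ so that every rectangle lies inside the domain of the bracket map and so that $\varphi(R)\cap R'$ is again a rectangle for any $R,R'\in\mathcal{R}_1$. From this I would build the closed refinements $\mathcal{R}_n$ in the standard dynamical way, namely as the non-empty intersections $\bigcap_{|r|\leq n-1}\varphi^{-r}(R_{i_r})$ with $R_{i_r}\in\mathcal{R}_1$; the Markov property ensures each such intersection is again a rectangle, and the contraction axioms C1 and C2 immediately yield the diameter bound in~(4) with $\theta$ of the order of $\diam(\mathcal{R}_1)$.

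Next I would produce the $\delta$-enlargement. For each $R\in\mathcal{R}_1$ I would pick a base point $x_R$ and use the bracket homeomorphism to identify $R$ with $R^u\times R^s$ inside $X^u(x_R,\varepsilon_X/2)\times X^s(x_R,\varepsilon_X/2)$, then thicken $R^u$ and $R^s$ by $\delta$ inside their respective local unstable and stable sets. Choosing a single $\delta>0$ depending only on the geometry of $\mathcal{R}_1$, the enlargements $R^\delta$ remain open rectangles, and propagating the construction through the dynamical refinement gives $\mathcal{R}_n^\delta$. Properties~(1)--(3) then follow in tandem: the enlargement is done in directions that are contracted by $\varphi$ or $\varphi^{-1}$, so $\varphi^r(R^\delta)$ for a level-$n$ rectangle and $|r|\leq n-1$ sits uniformly inside a level-$(n-|r|)$ enlarged rectangle, and points of $R^\delta$ remain $\varepsilon_X'$-close under all such $\varphi^r$. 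The multiplicity bound~(5) follows because whether a point $x$ lies in $R^\delta$ is, up to the bracket identification, determined by its local unstable and stable coordinates relative to some rectangle of $\mathcal{R}_1^\delta$, and there are at most $\#\mathcal{R}_1^\delta$ possibilities in each of the two transverse directions.

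For the entropy estimates in~(6), I would combine Bowen's characterisation of $\ent(\varphi)$ via $(n,\varepsilon)$-separated sets with the observation that distinct level-$n$ rectangles correspond to dynamically separated orbits of length $2n-1$. The lower bound $ce^{2(\ent(\varphi)-\varepsilon)n}$ arises because the refining sequence resolves both the stable and unstable directions, effectively contributing a factor $e^{(\ent(\varphi)-\varepsilon)n}$ each; the upper bound $Ce^{2(\ent(\varphi)+\varepsilon)n}$ comes from a standard covering argument against the equilibrium state. The Lebesgue number estimate~(7) would use the Lipschitz hypothesis to transfer control between scales: a fixed $\delta$-neighbourhood of each rectangle in $\mathcal{R}_1^\delta$ survives as at least a $\Lambda^{-(n-1)}\eta$-neighbourhood after $n$ refinement steps, because the worst-case shrinking under $\varphi^{\pm 1}$ of the enlargement is by a factor $\Lambda^{-(n-1)}$.

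The main obstacle is calibrating the $\delta$-enlargement so that (1) and (3) hold simultaneously for every $n$: enlarging too much breaks the refinement relation at deep levels, while enlarging too little defeats the Lebesgue bound in~(7). The resolution is to pick $\delta$ once, small enough that the $\delta$-thickenings of the rectangles of $\mathcal{R}_1$ in the stable and unstable directions remain pairwise disjoint where their un-enlarged counterparts were disjoint, and then to exploit the fact that the dynamical refinement only contracts these thickenings further. A second delicate point is keeping the multiplicity bound independent of $n$; this relies on the observation that at any fixed $x\in X$ the local stable (resp.\ unstable) leaf through $x$ meets at most $\#\mathcal{R}_1^\delta$ of the enlarged rectangles at every level, because transverse overlaps are inherited from the first level through the bracket product structure.
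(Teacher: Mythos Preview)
The paper does not contain a proof of this theorem. It is quoted verbatim as \cite[Theorem 6.2]{Gero} and the surrounding text explicitly defers to that reference: ``While we don't go into the specifics of the construction here, \cite[Theorem 6.2]{Gero} is also key\ldots\ We encourage the reader to delve into specifics of the first author's construction in \cite{Gero}.'' So there is no proof in this paper against which to compare your proposal.

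That said, your sketch is broadly in the right spirit but leaves the central construction underspecified. The paper's informal description indicates that one first $\delta$-enlarges $\mathcal{R}_1$ and then builds $\mathcal{R}_n^\delta$ \emph{inductively}, and the diameter bound (4) forces the enlargement at level $n$ to scale like $\lambda^{-n}$ rather than be a fixed metric $\delta$. Your phrase ``propagating the construction through the dynamical refinement'' is compatible with defining $\mathcal{R}_n^\delta$ as the non-empty intersections $\bigcap_{|r|\le n-1}\varphi^{-r}(R_{i_r}^\delta)$ with $R_{i_r}^\delta\in\mathcal{R}_1^\delta$, and indeed that is the only reading under which (1), (3), (4) and (7) can all hold with a single $\delta$; you should make this explicit rather than leave it implicit. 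Your multiplicity argument for (5) is the right idea but needs the precise statement that membership in a level-$n$ enlarged rectangle is determined by the pair of itineraries $(R_{i_0}^\delta,\ldots,R_{i_{n-1}}^\delta)$ and $(R_{i_0}^\delta,\ldots,R_{i_{-(n-1)}}^\delta)$, each of which is pinned down at a point $x$ by at most $\#\mathcal{R}_1^\delta$ choices for the initial symbol. Your entropy argument for (6) is only heuristic: the honest route is to identify $\#\mathcal{R}_n^\delta$ with the number of admissible words of length $2n-1$ in the associated subshift of finite type and invoke the standard growth rate $e^{(2n-1)\ent(\varphi)}$ for that count, rather than appeal to separated sets in each transverse direction separately.
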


The following corollary shows that, up to conjugacy, condition (7) of Theorem \ref{thm:theoremgraphSmalespaces} always holds in the best possible way.

\begin{cor}[{\cite[Corollary 6.3]{Gero}}]\label{cor:topconjgraph}
An irreducible Smale space $(X,\varphi)$ is topologically conjugate to a self-similar Smale space $(Y,\psi)$ which admits a refining sequence of $\delta$-enlarged Markov partitions, with contraction constant $\lambda_{Y}>1$ equal to $\Lambda_{Y}.$ 
\end{cor}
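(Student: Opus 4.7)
The plan is to deduce the corollary directly by chaining Artigue's theorem (Theorem~\ref{lem:Artiguelemma}) with the main approximation theorem (Theorem~\ref{thm:theoremgraphSmalespaces}). Applying Artigue's result to $(X,d,\varphi)$ yields a compatible metric $d'$ on $X$ for which $(X,d',\varphi)$ is a self-similar Smale space with contraction constant $\lambda_Y>1$. I would then set $(Y,\psi):=(X,d',\varphi)$ and take the conjugating homeomorphism to be the identity map $\id_X$. Since $d$ and $d'$ are compatible, $\id_X$ is a homeomorphism between the two topological spaces, and it trivially intertwines $\varphi$ with itself, so it is a topological conjugacy. Irreducibility is a purely topological-dynamical property of the pair $(X,\varphi)$ and therefore transfers to $(Y,\psi)$.

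Next I would verify that the self-similar metric $d'$ actually realises the equality $\Lambda_Y=\lambda_Y$ appearing in the statement. By Definition~\ref{def:selfsimilarSmalespace}, both $\psi$ and $\psi^{-1}$ are $\lambda_Y$-Lipschitz with respect to $d'$, giving $\Lambda_Y:=\max\{\Lip(\psi),\Lip(\psi^{-1})\}\leq \lambda_Y$. For the reverse inequality I would invoke the contraction axioms. Axiom C1 forces $d'(\psi(y),\psi(z))\leq \lambda_Y^{-1}d'(y,z)$ for $y,z$ in a local stable set; taking $y\neq z$ with equality realised in the limit along expansive pairs then yields $\Lip(\psi^{-1})\geq \lambda_Y$. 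Axiom C2 gives the symmetric lower bound $\Lip(\psi)\geq \lambda_Y$ from the unstable direction. Combining these, $\Lambda_Y=\lambda_Y$.

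With $(Y,\psi)$ irreducible and $\psi,\psi^{-1}$ Lipschitz in the metric $d'$, the full hypothesis list of Theorem~\ref{thm:theoremgraphSmalespaces} is met, so I would simply apply it to produce a refining sequence $(\mathcal{R}_n^{\delta})_{n\geq 0}$ of $\delta$-enlarged Markov partitions satisfying (1)--(7). In particular, the Lebesgue bound in (7) then reads $\Leb(\mathcal{R}_n^{\delta})\geq \lambda_Y^{-n+1}\eta$, which is what the corollary records as the ``best possible'' outcome: the bound is governed by the intrinsic contraction rate of the Smale space rather than a possibly larger Lipschitz constant. The main (and only) subtle point in this plan is the identification $\Lambda_Y=\lambda_Y$ in paragraph two, since the \emph{a priori} possibility $\Lambda_Y<\lambda_Y$ would need to be excluded using expansiveness on local stable/unstable sets; beyond that, the corollary is a clean repackaging of the two preceding theorems.
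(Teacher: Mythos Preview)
Your overall strategy is exactly what the paper intends: the corollary is stated without proof and simply cited from \cite{Gero}, and the surrounding text makes clear it is meant to follow by combining Artigue's Theorem~\ref{lem:Artiguelemma} with Theorem~\ref{thm:theoremgraphSmalespaces}. Your reduction via $(Y,\psi)=(X,d',\varphi)$ with $\id_X$ as conjugacy, the transfer of irreducibility, and the final application of Theorem~\ref{thm:theoremgraphSmalespaces} are all correct.

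The one place your write-up wobbles is the reverse inequality $\Lambda_Y\geq\lambda_Y$. You invoke ``equality realised in the limit along expansive pairs'', but no limiting argument or near-equality in C1 is needed, and as written the reasoning is not valid. The bound falls out directly: for any $y\neq z$ in a local stable set, axiom C1 gives $d'(\psi(y),\psi(z))\leq\lambda_Y^{-1}d'(y,z)$, hence
\[
\frac{d'(\psi^{-1}(\psi(y)),\psi^{-1}(\psi(z)))}{d'(\psi(y),\psi(z))}=\frac{d'(y,z)}{d'(\psi(y),\psi(z))}\geq\lambda_Y,
\]
so $\Lip(\psi^{-1})\geq\lambda_Y$ immediately, provided some local stable set contains two distinct points (which holds for an infinite irreducible Smale space). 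The analogous use of C2 gives $\Lip(\psi)\geq\lambda_Y$. Combined with $\Lambda_Y\leq\lambda_Y$ from Definition~\ref{def:selfsimilarSmalespace}, this yields $\Lambda_Y=\lambda_Y$. Alternatively, since any $1<\lambda'\leq\lambda_Y$ also serves as a contraction constant in axioms C1--C2, once $\Lambda_Y>1$ is known one may simply \emph{declare} $\lambda_Y:=\Lambda_Y$; either route closes the gap cleanly.
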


\subsection{Smale space C*-algebras}\label{sec:Smale_C_alg}

In this section, we describe various $C^\ast$-algebras arising from an irreducible Smale space. These $C^*$-algebras arise from the stable and unstable equivalence relations on Smale spaces, closely following the approach of Putnam and Spielberg \cite{PS}. While our treatment here is brief, it is self-contained. For further details see any of \cite{KPW,  Ruelle_algebras, Putnam_algebras, PS}, with the first being specific to the situation at hand.

Suppose $(X,\varphi)$ is an irreducible Smale space and choose periodic orbits $P$ and $Q$. Define
\[ X^{s}(P) = \bigcup_{p \in P} X^{s}(p), \quad X^{u}(Q) = \bigcup_{q \in Q} X^{u}(q).  \]
Give $X^s(P)$ a locally compact and Hausdorff topology by forming a neighbourhood base of open sets  $X^s(x,\ep)$, as $x$ varies over $X^s(P)$ and $0 < \ep \leq \ep_X$. For $X^u(Q)$ we use the analogous approach. We obtain the stable and unstable groupoids 
\begin{align*}
\gs & = \{(v,w) : v \sim_s w \textrm{ and } v,w \in X^u(Q) \} \text{ and } \\
\gu & = \{(v,w) : v \sim_u w \textrm{ and } v,w \in X^s(P) \}.
\end{align*}
To topologise the stable groupoid, let $(v,w)$ be in $\gs$. Then for some $N \geq 0$, $\varphi^N(w)$ is in $X^s(\varphi^N(v),\ep_X^\prime/2)$. Continuity of $\varphi$ implies the existence of $\eta > 0$ such that
$\varphi^N(X^u(w,\eta)) \subset X^u(\varphi^N(w),\ep_X^\prime/2)$. Thus we can define the stable holonomy map $h^s: X^u(w,\eta) \rightarrow X^u(v, \ep_X/2)$ by
\[ 
h^s(x) = \varphi^{-N}[\varphi^N(x),\varphi^N(v)],
\]
which is a homeomorphism onto its image. Also, let 
\begin{equation}\label{V_basis}
V^s(v,w,h^s,\eta,N) = \{ (h^s(x),x) : x \in X^u(w,\eta)\}.
\end{equation}
A picture of the map $h^s$ appears in Figure \ref{fig:local_homeo}.

\begin{lemma}\label{stable nbhd}
The sets $V^s(v,w,h^s,\eta,N)$ from \eqref{V_basis} form a neighbourhood base for a second countable, locally compact and Hausdorff topology on $\gs$, for which $\gs$ is an \'etale groupoid. The analogous result holds for $\gu$ by considering the sets $V^u(v,w,h^u,\eta,N)$.
\end{lemma}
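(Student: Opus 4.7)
The plan is to verify that the sets $V^s(v,w,h^s,\eta,N)$ form a basis for a topology on $\gs$, and then check that this topology has the required properties (Hausdorff, second countable, locally compact, étale groupoid), with the $\gu$-case following by symmetric arguments after replacing $\varphi$ by $\varphi^{-1}$ and interchanging the roles of stable and unstable.

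First I would verify the basis axioms. The non-trivial condition is that given $(v_0,w_0) \in V^s(v_1,w_1,h_1^s,\eta_1,N_1) \cap V^s(v_2,w_2,h_2^s,\eta_2,N_2)$, one can produce a basic set $V^s(v_0,w_0,h_0^s,\eta_0,N_0)$ containing $(v_0,w_0)$ and contained in the intersection. I would take $N_0 \geq \max(N_1,N_2)$ large enough so that $\varphi^{N_0}(w_0) \in X^s(\varphi^{N_0}(v_0),\ep_X'/2)$, and then define $h_0^s(x) = \varphi^{-N_0}[\varphi^{N_0}(x),\varphi^{N_0}(v_0)]$ on a small ball $X^u(w_0,\eta_0)$. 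The bracket axioms B2--B3 together with the contraction axiom C2 then force $h_0^s$ to share the germ at $w_0$ with each $h_i^s$: the intersection of a local stable and local unstable set is a unique bracket, and applying $\varphi^{N_0-N_i}$ transfers the equality from scale $N_i$ to scale $N_0$ via B4. Shrinking $\eta_0$ suitably, the resulting basic set lies in both $V^s(v_i,w_i,h_i^s,\eta_i,N_i)$.

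Next I would establish the separation and countability properties. The source map is injective on each basic set, and $X^u(Q)$ is Hausdorff in its own locally compact topology. Hence distinct $(v_1,w_1)\neq (v_2,w_2)$ are separated either by disjoint source neighbourhoods when $w_1\neq w_2$, or by disjoint range neighbourhoods when $w_1=w_2$ and $v_1\neq v_2$; these lift to disjoint basic sets after shrinking $\eta$. Second countability follows from the countability of $Q$ (a finite union of periodic orbits) together with the second countability of $X^u(Q)$, by restricting $(v,w)$ to a countable dense subset, $\eta$ to rationals, and $N$ to $\mathbb N$. For the étale property, by construction the source map $(h^s(x),x)\mapsto x$ restricted to $V^s(v,w,h^s,\eta,N)$ is a homeomorphism onto the open set $X^u(w,\eta) \subset X^u(Q)$ with continuous inverse $x\mapsto(h^s(x),x)$, and similarly the range map is a homeomorphism onto the open set $h^s(X^u(w,\eta))$. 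Local compactness then follows from local compactness of $X^u(Q)$. Continuity of multiplication reduces to noting that the composition of two composable stable holonomies is a stable holonomy, so the product of two composable basic sets lies in a basic set carrying the composed holonomy; continuity of inversion follows analogously because the inverse of a stable holonomy is a stable holonomy on a slightly smaller domain.

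The main obstacle is the basis step: showing that any two stable holonomies joining the same pair $(v,w)$ share a germ at $w$. This rests on the uniqueness of bracket intersections of local stable and unstable sets, combined with a compatibility check between different lift numbers $N$ using axiom B4 and the contraction estimates C1--C2 to propagate coherence from one scale to another. Once this germ-uniqueness is in place, the remaining items follow the standard template for constructing étale topologies on equivalence groupoids, as in \cite{PS}.
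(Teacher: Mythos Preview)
The paper does not supply its own proof of this lemma; it is stated without proof in the preliminaries, with the construction attributed to Putnam and Spielberg \cite{PS} (see also \cite{KPW}). Your proposal is a correct outline of the standard argument found in those references: the germ-uniqueness of stable holonomies via the bracket axioms gives the basis condition, and the remaining properties follow because source and range restrict to homeomorphisms on each basic set. There is nothing to compare against in the paper itself, but your sketch would serve as a valid proof.
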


Note that the stable and unstable groupoids depend on $P$ and $Q$ up to the groupoid equivalence given in \cite{MRW}. These groupoids are proved to be amenable in \cite{PS}. Further, $X^h(P,Q)=X^s(P)\cap X^u(Q)$ is countable and dense in $X$.

Now consider the continuous functions of compact support $C_c(\gs)$ as a complex vector space. The convolution and involution on $C_c(\gs)$, for $f,g \in C_c(\gs)$ and $(v,w) \in \gs$, are given by
\newpage
\begin{align*}
f \cdot g(v,w) & = \sum_{z \sim_s v} f(v,z)g(z,w), \\
f^\ast(v,w) & = \overline{f(w,v)},
\end{align*}
making $C_c(\gs)$ into a complex $\ast$-algebra. A partition of unity argument shows that any function in $C_c(\gs)$ can be written as a sum of functions with support in an element of the neighbourhood base described in Lemma \ref{stable nbhd}.

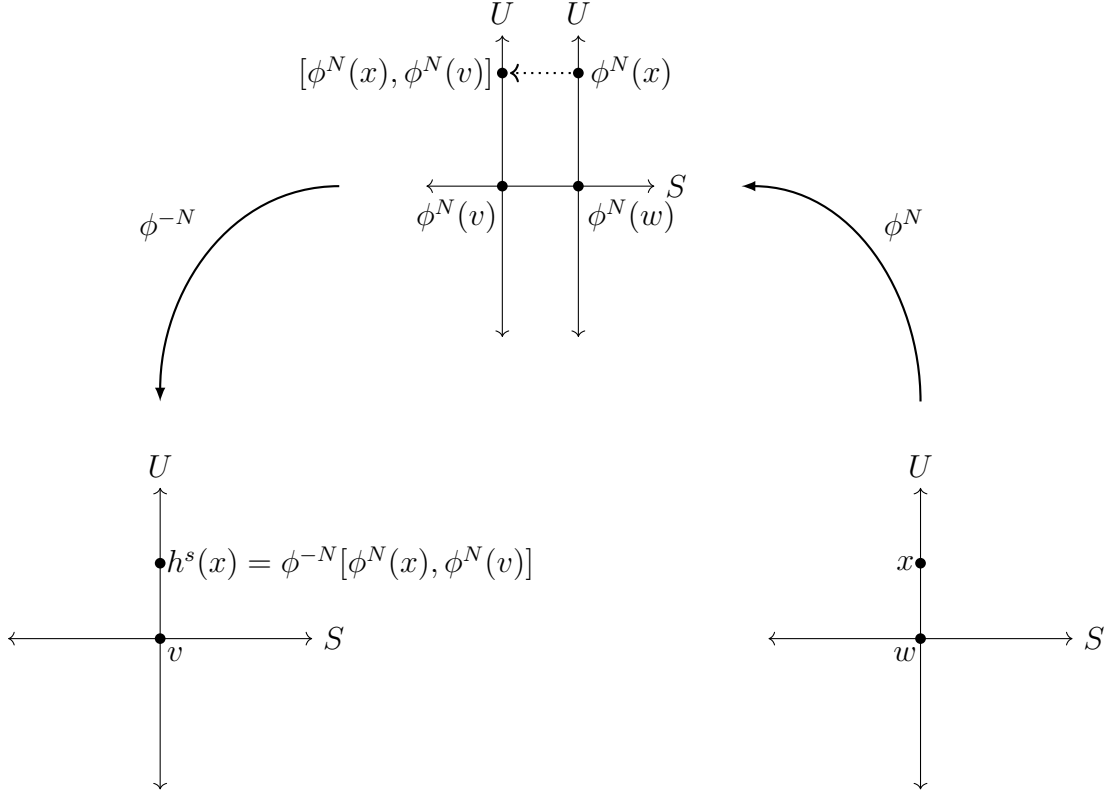
\begin{figure}[htb]
\begin{center}
\begin{tikzpicture}
\tikzstyle{axes}=[]
\begin{scope}[style=axes]
	\draw[<->] (3,0) -- (7,0) node[right] {$S$};
	\draw[<->] (5,-2) -- (5,2) node[above] {$U$};
	\node at (4.8,-0.2) {$w$};
	\node at (4.8,1) {$x$};
	\pgfpathcircle{\pgfpoint{5cm}{0cm}} {2pt};
	\pgfpathcircle{\pgfpoint{5cm}{1cm}} {2pt};
	\pgfusepath{fill}
\end{scope}
\begin{scope}[style=axes]
	\draw[<->] (-7,0) -- (-3,0) node[right] {$S$};
	\draw[<->] (-5,-2) -- (-5,2) node[above] {$U$};
	\node at (-4.8,-0.2) {$v$};
	\node at (-2.5,1) {$h^s(x)=\phi^{-N}[\phi^N(x),\phi^N(v)]$};
	\pgfpathcircle{\pgfpoint{-5cm}{0cm}} {2pt};
	\pgfpathcircle{\pgfpoint{-5cm}{1cm}} {2pt};
	\pgfusepath{fill}
\end{scope}
\begin{scope}[style=axes]
	\draw[<->] (-1.5,6) -- (1.5,6) node[right] {$S$};
	\draw[<->] (-.5,4) -- (-.5,8) node[above] {$U$};
	\draw[<->] (.5,4) -- (.5,8) node[above] {$U$};
	\node at (1.2,5.6) {$\phi^N(w)$};
	\node at (1.2,7.5) {$\phi^N(x)$};
	\node at (-1.1,5.6) {$\phi^N(v)$};
	\node at (-1.9,7.5) {$[\phi^N(x),\phi^N(v)]$};
	\pgfpathcircle{\pgfpoint{-.5cm}{6cm}} {2pt};
	\pgfpathcircle{\pgfpoint{.5cm}{6cm}} {2pt};
	\pgfpathcircle{\pgfpoint{-.5cm}{7.5cm}} {2pt};
	\pgfpathcircle{\pgfpoint{.5cm}{7.5cm}} {2pt};
	\pgfusepath{fill}
	\draw[->,thick,dotted] (0.4,7.5) -- (-0.4,7.5);
\end{scope}
\node (anchor1) at ( 2.5,6) {};
\node (anchor2) at ( 5,3) {}
	edge [->,>=latex,out=90,in=0,thick] node[auto,swap]{$\phi^N$} (anchor1);
\node (anchor3) at ( -2.5,6) {};
\node (anchor4) at ( -5,3) {}
	edge [<-,>=latex,out=90,in=180,thick] node[auto]{$\phi^{-N}$} (anchor3);
\end{tikzpicture}
\caption{The stable holonomy map onto its image $h^s: X^u(w,\eta) \rightarrow X^u(v,\varepsilon_X/2)$}
\label{fig:local_homeo}
\end{center}
\end{figure}

To complete $C_c(G^s(Q))$ into a $C^*$-algebra, define a faithful representation $\rho_s$ of $C_c(G^s(Q))$ on $\mathscr{H}=\ell^2(X^h(P,Q))$, for $f\in C_c(G^s(Q)), \, \xi \in \mathscr{H}$, by 
\begin{equation}\label{eq:regular representation_fund}
\rho_s(f)\xi(v)= \sum_{z\sim_s v}f(v,z) \xi (z).
\end{equation}
The \textit{stable algebra} $\mathcal{S}(Q)$ is defined as the completion of $\rho_s(C_c(G^s(Q)))$ in $\mathcal{B}(\mathscr{H})$ and the \textit{unstable algebra} $\mathcal{U}(P)$ is constructed similarly. See \cite{KPW,PS} for further details.

It is often convenient to choose $P=Q$. However, it will be important later on to choose $P\neq Q$ so that $X^h(P,Q)$ contains no periodic points. We also often suppress the notation of $\rho_s$ and write $a\xi$ instead of $\rho_s(a)\xi$, for $a\in C_c(G^s(Q)), \, \xi \in \mathscr{H}$.

\begin{lemma}[{\cite[Lemma 3.3]{KPW}}]
Suppose that $V^s(v,w,h^s,\eta,N)$ is an open set as in Lemma \ref{stable nbhd}, and $a\in C_c(G^s(Q))$ with $\text{supp}(a) \subset V^s(v,w,h^s,\eta,N)$. Then for every $x\in X^h(P,Q)$ we have $$a\delta_x= a(h^s(x),x)\delta_{h^s(x)},$$ if $x\in X^u(w,\eta)$, and $a\delta_x=0$, otherwise.
\end{lemma}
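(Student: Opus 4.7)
The plan is a direct unwinding of the regular representation formula \eqref{eq:regular representation_fund} combined with the description of $V^s(v,w,h^s,\eta,N)$ as the graph of the holonomy map $h^s$ over $X^u(w,\eta)$. Everything reduces to identifying which pairs $(v',z) \in G^s(Q)$ can contribute to the sum defining $a\delta_x(v')$.

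First I would apply the definition of $\rho_s$ to the basis vector $\xi = \delta_x$. Since $\delta_x(z)$ vanishes unless $z = x$, the sum
\[
(a\delta_x)(v') \;=\; \sum_{z \sim_s v'} a(v',z)\, \delta_x(z)
\]
collapses to $a(v',x)$ when $v' \sim_s x$, and is $0$ otherwise. Hence $a\delta_x$ is supported on the set of points $v' \in X^h(P,Q)$ for which $(v',x)$ lies in $G^s(Q)$ and $a(v',x) \neq 0$.

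Next I would use the support hypothesis. By \eqref{V_basis}, any pair in $V^s(v,w,h^s,\eta,N)$ has the form $(h^s(y), y)$ with $y \in X^u(w,\eta)$. In particular, its second coordinate must lie in $X^u(w,\eta)$, and if the second coordinate equals some $y \in X^u(w,\eta)$ then the first coordinate is uniquely determined as $h^s(y)$. Therefore, if $x \notin X^u(w,\eta)$, no pair $(v',x)$ can lie in $\supp(a)$, so every term $a(v',x)$ vanishes and $a\delta_x = 0$. On the other hand, if $x \in X^u(w,\eta)$, then $a(v',x)$ can be nonzero only for $v' = h^s(x)$, in which case $(a\delta_x)(v') = a(h^s(x),x)$. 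Writing this as a vector in $\mathscr{H}$ gives exactly $a\delta_x = a(h^s(x),x)\,\delta_{h^s(x)}$.

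Since the argument is purely definitional, there is no real obstacle; the only point requiring a moment's care is to confirm that $v' = h^s(x)$ indeed belongs to $X^h(P,Q)$, which follows because $h^s$ sends points of $X^u(w,\eta)$ into $X^u(v,\varepsilon_X/2) \subset X^u(Q)$ while preserving stable equivalence with $x \in X^s(P)$, so $h^s(x) \in X^u(Q) \cap X^s(P) = X^h(P,Q)$, making $\delta_{h^s(x)}$ a legitimate basis vector.
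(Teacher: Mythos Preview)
Your proof is correct and is exactly the natural argument: collapse the convolution sum using $\delta_x$, then use that $V^s(v,w,h^s,\eta,N)$ is the graph of $h^s$ over $X^u(w,\eta)$ to identify the single possible nonzero term. The paper does not actually supply a proof of this lemma---it is simply cited from \cite[Lemma~3.3]{KPW}---so there is nothing to compare against, but your argument is the standard one and matches what one finds in that reference.
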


The primary $C^*$-algebras in this paper are the stable and unstable Ruelle algebras, which are crossed products of $\mathcal{S}(Q)$ and $\mathcal{U}(P)$ by the integers. Indeed, define an automorphism $\alpha_s$ of $C_c(G^s(Q))$ by 
\begin{equation}
\alpha_s(f)=f\circ (\varphi^{-1} \times \varphi^{-1}).
\end{equation}
By continuity, the automorphism $\alpha_s$ extends to $\mathcal{S}(Q)$. For $a\in C_c(G^s(Q))$ with $\supp(a)\subset V^s(v,w,h^s,\eta,N)$ and $x\in X^h(P,Q)$  such that $h^s(\varphi^{-1}(x))$ is defined, we have 
\begin{equation}\label{eq:groupoid_autom}
\alpha_s(a)\delta_x=a(h^s\circ \varphi^{-1}(x), \varphi^{-1}(x))\delta_{\varphi \circ h^s \circ \varphi^{-1}(x)}.
\end{equation}

Moreover, the homeomorphism $\varphi$ induces a unitary $u$ on $\mathscr{H}$ given by $u\delta_x=\delta_{\varphi (x)}$, such that $\alpha_s(a)=uau^*$ for every $a\in \mathcal{S}(Q)$. Thus we obtain a crossed product 
$\mathcal{S}(Q)\rtimes_{\alpha_s} \mathbb Z$ called the \emph{stable Ruelle algebra}. The unstable Ruelle algebra for $G^u(P)$ is constructed analogously. 

\begin{notation}\label{def:SmaleCalg}
The stable and unstable algebras as defined above are denoted by $\mathcal{S}(Q)$ and $\mathcal{U}(P)$, respectively. Also, the stable and unstable Ruelle algebras are denoted by
$\mathcal{R}^s(Q)=\mathcal{S}(Q)\rtimes_{\alpha_s} \mathbb Z$ and $\mathcal{R}^u(P)=\mathcal{U}(P)\rtimes_{\alpha_u} \mathbb Z$, respectively.
\end{notation}

For any choice of $P$ and $Q$, the stable and unstable (Ruelle) algebras are strongly Morita equivalent to the ones in \cite{Putnam_algebras}. Further, since the algebras are $C^*$-stable (see \cite[Theorem A.2]{DY} and \cite[Corollary 4.5]{HRor}), for periodic orbits $Q$ and $Q'$ we have that $\mathcal{S}(Q)\cong \mathcal{S}(Q')$ and $\mathcal{R}^s(Q)\cong \mathcal{R}^s(Q')$. The same holds for the unstable (Ruelle) algebras.

It is worth mentioning that these $C^*$-algebras fit into Elliott's classification program of simple, separable, nuclear $C^*$-algebras, and satisfy the UCT \cite{PS}. In addition, the stable and unstable algebras have finite nuclear dimension \cite[Corollary 3.8]{DS2} and are quasidiagonal \cite{DGY}. If the Smale space is mixing, the stable and unstable algebras are also simple and have unique traces \cite[Theorem 3.3]{Putnam_algebras}. We note that these traces are unbounded. In general, Ruelle algebras are also simple and purely infinite \cite{PS}. As a result, they can be classified up to isomorphism by $\Kt$-theory \cite{KirP}.

The stable and unstable algebras of a subshift of finite type are approximately finite dimensional and the associated Ruelle algebras are strongly Morita equivalent to Cuntz-Krieger algebras. Further, the stable and unstable algebras of the dyadic solenoid are strongly Morita equivalent to the Bunce-Deddens algebra of type $2^{\infty}$. In the case of hyperbolic toral automorphisms, these $C^*$-algebras are equivalent to irrational rotation algebras. For further details see \cite{Putnam_algebras,PS}.

\subsection{K-theoretic preliminaries}\label{sec:K-theoretic_prelim}

We assume that the reader is familiar with Hilbert $C^*$-modules as defined in \cite{Lance}. To a pair $(A,B)$ of separable $\mathbb Z_2$-graded $C^*$-algebras, Kasparov \cite{Kasparov5,Kasparov2} associated an abelian group $\KKt_0(A,B)$ formed by equivalence classes of elements that are described in the following definition.

\begin{definition}[{\cite[Definition 17.1.1]{Blackadar} \cite[Definition 2.1]{Kasparov2}}]\label{def:Kasparovbimodules}
Let $A,B$ be separable $\mathbb Z_2$-graded $C^*$-algebras. A \textit{Kasparov} $(A,B)$-\textit{bimodule} is a triple $(E,\rho,F)$ such that
\begin{enumerate}[(1)]
\item $E$ is a countably generated $\mathbb Z_2$-graded Hilbert $B$-module;
\item $\rho:A\to \mathcal B_B (E)$ is a graded $*$-homomorphism to the adjointable operators;
\item the operator $F\in \mathcal B_B(E)$ anti-commutes with the grading of $E$ and satisfies $$\rho(a)(F^*-F)\in \mathcal{K}_B(E),\quad \rho(a)(F^2-1)\in \mathcal{K}_B(E),\quad [\rho(a),F]\in \mathcal{K}_B(E),$$ for all $a\in A$, where $\mathcal{K}_B(E)$ is the two-sided ideal of $B$-compact operators.
\end{enumerate}
\end{definition}

The triples in the definition above are typically called Kasparov modules and define elements of $\KKt_0(A,B)$, under unitary and operator homotopy equivalence \cite{Kasparov2}. Let $\mathbb C_1$ be the Clifford algebra with one generator. The group $\KKt_0(A\otimes \mathbb C_1,B)$ is denoted by $\KKt_1(A,B)$. 

In the sequel all $C^*$-algebras, except for $\mathbb C_1$, will have trivial grading. The two extremes of $\KKt$-theory are $\Kt$-theory and $\Kt$-homology: $\KKt_i(\mathbb C, B)\cong \Kt_i(B)$ and $\KKt_i(A, \mathbb C)=\Kt^i(A)$, respectively. The $\Kt$-homology groups are built from equivalence classes of \textit{Fredholm modules} over $A$, see \cite{HR}. Of specific interest to us are the odd Fredholm modules over $A$. These are the Kasparov $(A,\mathbb C)$-bimodules without any mention on gradings.

\begin{notation}
By \cite[IV-A-Proposition 13]{Connes}, the odd Fredholm modules correspond to Kasparov $(A\otimes \mathbb C_1, \mathbb C)$-bimodules. Under this identification, from this point on we will denote the classes in $\KKt_1(A,\mathbb C)$ as $\Kt$-homology classes of odd Fredholm modules over $A$.
\end{notation}

Notably, the group $\KKt_1(A,\mathbb C)$ is naturally isomorphic to the group of equivalence classes of invertible extensions of $A$ by the compact operators $\mathcal{K}(H)$, where $H$ is a separable Hilbert space. The isomorphism takes the class $[H,\rho,F]$ in $\KKt_1(A,\mathbb C)$ (under the normalisation $F=F^*, F^2=1$) to the class of the Toeplitz extension $\tau: A\to \mathcal{Q}(H)$ given by $\tau(a)=P\rho(a)P+\mathcal{K}(H)$, where $P$ is the projection $(F+1)/2$. The inverse map is defined through Stinespring dilation. Also, we note that if $A$ is nuclear then all extensions are invertible due to the Choi-Effros Lifting Theorem. For these details we refer to \cite{HR}. In general, this approach provides an abstract description of the inverse map and it is a challenging task to find a concrete one. Computing explicitly the inverse map for a specific case is the main goal of this paper. We refer to Theorem \ref{thm:Liftingtheorem} of the current subsection for more details.

The external tensor product of a $C^*$-algebra $D$ on the right of a class is defined to be 
\begin{equation}\label{eq:tensortriple}
\tau_D[\mathcal{E}]=[E\otimes D, \rho \otimes \id , F\otimes \id]\in \KKt_0(A\otimes D, B\otimes D),
\end{equation}
such that the graded tensor products of $C^*$-algebras are equipped with the spatial norm. Similarly, the left tensor product by $D$ gives the class $\tau^D[\mathcal{E}]$. 

We note that $\KKt$-theory forms a bifunctor of pairs of $C^*$-algebras that is $C^*$-stable, homotopy-invariant in both variables and satisfies \textit{formal Bott periodicity}; that is, $\KKt_1(A\otimes \mathbb C_1,B)\cong \KKt_0(A,B)$ and $\KKt_0(A,B\otimes \mathbb C_1)\cong \KKt_1(A,B)$.

The main tool in $\KKt$-theory is the \textit{Kasparov product}
\begin{equation}\label{eq:Kasparovprod1}
\otimes_D:\KKt_0(A,D)\times \KKt_0(D,B)\to \KKt_0(A,B)
\end{equation}
that generalises the cup-cap product from topological $\Kt$-theory. It is associative and functorial in every possible way and turns $\KKt$-theory into a category \cite{Blackadar}. Moreover, the Kasparov product provides $\KKt_0(A,A)$ with a ring structure with identity $1_A$. Using tensor products, the Kasparov product is generalised to the following operation:
\begin{equation}\label{eq:Kasparovprod2}
\KKt_0(A_1,B_1\otimes D)\times \KKt_0(D\otimes A_2,B_2)\to \KKt_0(A_1\otimes A_2,B_1\otimes B_2).
\end{equation}
Abusing notation, for $x\in \KKt_0(A_1,B_1\otimes D)$ and $y\in \KKt_0(D\otimes A_2,B_2)$, operation (\ref{eq:Kasparovprod2}) is denoted as
\[
x\otimes_D y \coloneqq \tau_{A_2}(x)\otimes_{B_1\otimes D\otimes A_2}\tau^{B_1}(y).
\]
The Kasparov product also naturally encodes \textit{Bott periodicity}, see \cite[Section 19.2.5]{Blackadar}.

We conclude this subsection with a discussion of smooth extensions and summable Fredholm modules.  Recall that $H$ is a separable Hilbert space. Given a compact operator $T$, let $(s_n(T))_{n\in \mathbb N}$ be the sequence of its singular values in decreasing order, counting multiplicities.

For $p>0$, the \textit{Schatten} $p$-\textit{ideal} on $H$ is the set
\begin{equation}\label{eq:Schattenideal}
\mathcal{L}^p(H)=\{T\in \mathcal{K}(H): (s_n(T))_{n\in \mathbb N}\in \ell^p(\mathbb N)\},
\end{equation}
equipped with the \textit{(quasi-)norm} 
\begin{equation}\label{eq:Schattennorm}
\|T\|_p=\|(s_n(T))_{n\in \mathbb N}\|_{\ell^p(\mathbb N)}.
\end{equation}
For $p\geq 1$, the \textit{Logarithmic integral} $p$\textit{-ideal} on $H$ is defined as 
\begin{equation}\label{eq:Liideal}
\Li^{1/p}(H)=\{T\in \mathcal{K}(H): s_n(T)=O((\log n)^{-1/p})\}
\end{equation}
with norm described on \cite[p. 391]{Connes}. For simplicity, denote $\Li^1(H)$ by $\Li(H)$. For $p\geq 1$, the ideals $\mathcal{L}^p(H)$ and $\Li^{1/p}(H)$ are examples of symmetrically normed ideals in $\mathcal{B}(H)$. For $p\in (0,1),$ the ideals $\mathcal{L}^p(H)$ are only quasi-normed but still enjoy the nice properties of symmetrically normed ideals. For details we refer to \cite{Gero_Thesis}.

The following notion was introduced by Douglas \cite{Douglas} to study analytic properties of extensions and later by Douglas and Voiculescu \cite{DV} in index theory. Let $\mathcal{I}$ denote one of the aforementioned ideals or $\mathcal{K}(H)$.

\begin{definition}[\cite{Douglas}]\label{def:summableextension} Let $\mathcal{A}$ be a dense $*$-subalgebra of the $C^*$-algebra $A$. An extension $\tau:A\to \mathcal{Q}(H)$ will be called $\mathcal{I}$\textit{-smooth on} $\mathcal{A}$ if there is a linear map $\eta: \mathcal{A}\to \mathcal{B}(H)$ such that $$\eta(ab)-\eta(a)\eta(b)\in \mathcal{I},\quad  \eta(a^*)-\eta(a)^*\in \mathcal{I}$$ and $\tau(a)=\eta(a)+\mathcal{K}(H)$, for all $a,b \in \mathcal{A}$. If $\mathcal{I}=\mathcal{L}^p(H)$, the extension will be called $p$\textit{-smooth} (or just \textit{finitely smooth}).
\end{definition}

Summability of Fredholm modules was introduced by Connes \cite{Connes} and is regarded as a refinement of the notion of smoothness for extensions. Summability plays a central role in Connes' approach to index theory. 

\begin{definition}[{\cite{Connes, GM}}]
Let $\mathcal{A}$ be a dense $*$-subalgebra of the $C^*$-algebra $A$. Also, consider the ideal $\mathcal{I}^{1/2}=\text{span}\{T\in \mathcal{K}(H):T^*T,\, TT^*\in \mathcal{I}\}$. A Fredholm module $(H,\rho,F)$ over $A$ is $\mathcal{I}^{1/2}$\textit{-summable on} $\mathcal{A}$ if, for every $a\in \mathcal{A}$, we have 
\[
\rho(a)(F^*-F)\in \mathcal{I},\quad \rho(a)(F^2-1)\in \mathcal{I},\quad [\rho(a),F]\in \mathcal{I}^{1/2}.
\]
If $\mathcal{I}^{1/2}=\mathcal{L}^p(H)$, the Fredholm module is called $p$\textit{-summable} and if $\mathcal{I}^{1/2}=\Li^{1/2}(H)$ the Fredholm module is called $\theta$-summable.
\end{definition}

In the extension picture of $\KKt_1(A,\mathbb C)$, given a class of an odd $\mathcal{I}^{1/2}$-summable Fredholm module we obtain a class of an $\mathcal{I}$-smooth extension. On the other hand, the inverse procedure of finding summable Fredholm module representatives for classes of smooth invertible extensions is a subtle process, see \cite{Goffeng}. However, a general recipe to this end is the following. 

\begin{thm}[{\cite[Theorem 2.2.1]{GM}}]\label{thm:Liftingtheorem}
Let $\tau:A\to \mathcal{Q}(H)$ be an extension that is $\mathcal{I}$-smooth on a dense $*$-subalgebra $\mathcal{A}\subset A$ and $\eta:\mathcal{A}\to \mathcal{B}(H)$ be a linear map as in Definition \ref{def:summableextension}. If there is an isometry $V:H\to H'$ and a representation $\rho:A\to \mathcal{B}(H')$ such that $\eta(a)-V^*\rho(a)V\in \mathcal{I},$ for all $a\in \mathcal{A}$, then the triple $(H', \rho , 2VV^*-1)$ is an odd Fredholm module over $A$. Moreover, it is $\mathcal{I}^{1/2}$-summable on $\mathcal{A}$ and represents the equivalence class of $\tau$.
\end{thm}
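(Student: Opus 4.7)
The plan is a direct verification exploiting the structure $F = 2P - 1$ with $P = VV^*$. First I would note that since $V$ is an isometry, $P$ is a self-adjoint projection on $H'$, whence $F^* = F$ and $F^2 = 1$ hold identically. This immediately makes $\rho(a)(F^*-F) = \rho(a)(F^2-1) = 0$, so the only substantive conditions to verify are $[\rho(a), F] \in \mathcal{K}(H')$ for all $a \in A$ (to get a Fredholm module) and the sharper $[\rho(a), F] \in \mathcal{I}^{1/2}$ for $a \in \mathcal{A}$ (to get $\mathcal{I}^{1/2}$-summability).

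Setting $Q = 1 - P$, a direct computation gives $[\rho(a), F] = 2(Q\rho(a)P - P\rho(a)Q)$. Since $\mathcal{I}^{1/2}$ is closed under adjoints, it is enough to prove $Q\rho(a)P \in \mathcal{I}^{1/2}$, and since $Q\rho(a)P = (Q\rho(a)V)V^*$ with $\mathcal{I}^{1/2}$ a two-sided ideal, it suffices to show $Q\rho(a)V \in \mathcal{I}^{1/2}$. By the definition of $\mathcal{I}^{1/2}$ and the fact that in a symmetric ideal $T^*T \in \mathcal{I}$ already forces $TT^* \in \mathcal{I}$ (via polar decomposition, using $s_n(T^*T) = s_n(TT^*)$), everything reduces to the single check $(Q\rho(a)V)^*(Q\rho(a)V) \in \mathcal{I}$.

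The key computation is the telescoping identity
\[ (Q\rho(a)V)^*(Q\rho(a)V) = V^*\rho(a^*a)V - (V^*\rho(a)^*V)(V^*\rho(a)V), \]
obtained from $\rho$ being a $\ast$-homomorphism and $P = VV^*$. I would then chain the hypotheses modulo $\mathcal{I}$: $V^*\rho(a^*a)V \equiv \eta(a^*a)$ (applying the standing hypothesis to $a^*a \in \mathcal{A}$), $V^*\rho(a)^*V \equiv \eta(a^*) \equiv \eta(a)^*$ (combining the hypothesis with the $\ast$-compatibility in Definition \ref{def:summableextension}), $V^*\rho(a)V \equiv \eta(a)$, and finally $\eta(a^*a) \equiv \eta(a^*)\eta(a) \equiv \eta(a)^*\eta(a)$ by $\mathcal{I}$-smoothness. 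Bounded factors preserve $\mathcal{I}$, so the error terms still lie in $\mathcal{I}$, and the right-hand side is congruent to $\eta(a^*a) - \eta(a)^*\eta(a) \in \mathcal{I}$. Norm-density of $\mathcal{A}$ in $A$, combined with norm-continuity of $a \mapsto [\rho(a), F]$ and $\mathcal{I}^{1/2} \subset \mathcal{K}(H')$, upgrades compactness of the commutator to every $a \in A$.

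To identify the resulting class with $[\tau]$, I would appeal to the standard bijection between normalised odd Fredholm modules and Toeplitz-type extensions: the module $(H',\rho,F)$ corresponds to the extension $a \mapsto P\rho(a)P + \mathcal{K}(PH')$ acting on $PH'$. Identifying $PH'$ with $H$ through the unitary $V: H \to PH'$, this compression becomes $a \mapsto V^*\rho(a)V + \mathcal{K}(H)$, which by the standing hypothesis coincides with $\eta(a) + \mathcal{K}(H) = \tau(a)$ on $\mathcal{A}$ and hence on all of $A$ by continuity in $\mathcal{Q}(H)$. Thus both extensions define the same class in $\KKt_1(A,\mathbb{C})$. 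The main obstacle I anticipate is handling the symmetric-ideal implication $T^*T \in \mathcal{I}$ implying $T \in \mathcal{I}^{1/2}$ uniformly across the list $\mathcal{L}^p$ for $p \in (0,\infty)$, $\Li^{1/p}$, and $\mathcal{K}$, which needs the quasi-normed symmetric ideal facts referenced in the preliminaries; once that is in hand, the remainder is a careful bookkeeping of the smoothness relations.
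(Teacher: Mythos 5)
Your argument is correct: the computation $[\rho(a),F]=2(Q\rho(a)P-P\rho(a)Q)$, the reduction to $(Q\rho(a)V)^*(Q\rho(a)V)=V^*\rho(a^*a)V-(V^*\rho(a)^*V)(V^*\rho(a)V)\in\mathcal{I}$ via the smoothness relations, the density upgrade to compact commutators on all of $A$, and the identification of the compressed Toeplitz extension $a\mapsto V^*\rho(a)V+\mathcal{K}(H)$ with $\tau$ are all sound, and the symmetric-ideal point you flag ($T^*T\in\mathcal{I}\Rightarrow TT^*\in\mathcal{I}$ via $s_n(T^*T)=s_n(TT^*)$) is standard for the ideals in play. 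Note that the paper itself gives no proof of this statement — it is imported from \cite[Theorem 2.2.1]{GM} — and your proof is essentially the standard compression argument given there.
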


\subsection{KK-duality}\label{sec:SW-duality} Spanier-Whitehead duality \cite{BG} relates the homology of a finite complex with the cohomology of a dual complex. The following noncommutative analogue is based on definitions given in \cite{EmersonDuality,KP,KPW}.

\begin{definition}\label{def:SW-Kduality}
Let $A$ and $B$ be separable $C^*$-algebras. We say that $A$ and $B$ are \textit{Spanier-Whitehead} $\KKt$\textit{-dual} (or just $\KKt$\textit{-dual}) if there is a $\Kt$-homology class $\mu \in \KKt_i(A\otimes B,\mathbb C)$ and a $\Kt$-theory class $\nu \in \KKt_i(\mathbb C, A\otimes B)$ such that 
\begin{align*}
\nu \otimes_B \mu=1_A \quad \text{and} \quad \nu  \otimes_A \mu=(-1)^i1_B,
\end{align*}
where the flip isomorphism $A\otimes B\to B\otimes A$ is employed where required to ensure the Kasparov product makes sense. In analogy with the commutative setting, we call the $\Kt$-homology class in a $\KKt$-duality pair a \emph{fundamental class}.
If $A$ is $\KKt$\textit{-dual} to its opposite algebra $A^{\textit{op}}$, we say $A$ satisfies \textit{Poincar{\'e} duality}.
\end{definition}

Given a $\KKt$-duality pair $(\mu,\nu)$ between $A$ and $B$, there are isomorphisms
\begin{equation}\label{eq:slantprod1}
\nu \otimes_B -:\KKt_j(B, \mathbb C)\to \KKt_{j+i}(\mathbb C, A),
\end{equation}
\begin{equation}\label{eq:slantprod2}
- \otimes_A \mu: \KKt_j(\mathbb C, A)\to \KKt_{j+i}(B,\mathbb C),
\end{equation}
see \cite[Lemma 9]{EmersonDuality} for the most general framework. It is also worth mentioning that $\nu$ is unique for $\mu$ and vice versa. The question whether a separable $C^*$-algebra has a Spanier-Whitehead $\KKt$-dual is addressed in \cite{KS}, which gives an excellent overview of $\KKt$-duality.

Connes \cite[Chapter VI]{Connes} proved that the irrational rotation algebras satisfy Poincar{\'e} duality. The first odd Poincar\'e duality was proved by Kaminker and Putnam \cite{KP} for Cuntz-Krieger algebras. Further, Popescu and Zacharias \cite{PZ} showed that higher rank graph algebras satisfy Poincar\'e duality. Emerson \cite{EmersonDuality} showed that $C(\partial \Gamma)\rtimes \Gamma$ satisfies Poincar{\'e} duality for a large class of hyperbolic groups $\Gamma$. Also, Echterhoff, Emerson, and Kim \cite{EEK2} showed $\KKt$-duality for certain classes of orbifold $C^*$-algebras. Nishikawa and Proietti studied Spanier-Whitehead duality for discrete groups and compared it with the Baum-Connes conjecture \cite{NP}. As we will see in the next section, Kaminker, Putnam, and the second author \cite{KPW} proved $\KKt$-duality between the stable and unstable Ruelle algebras.

\subsection{KK-duality for Ruelle algebras}\label{sec:K-duality_Ruelle}

We now explain the $\KKt$-duality of Ruelle algebras by focussing on the fundamental class, represented by an extension in \cite{KPW}. Then, due to nuclearity of Ruelle algebras, the fundamental class has an abstract Fredholm module representative. The thrust of this paper is to define a concrete Fredholm module version of the fundamental class. We refer to \cite[Section 5]{KPW} for the $\Kt$-theory duality class since we do not require it here.

As a standing assumption for this section, let $(X,\varphi)$ be an irreducible Smale space with periodic orbits $P,Q$ such that $P\cap Q=\varnothing$. Let $\mathscr{H}=\ell^2(X^h(P,Q))$ and note that $X^h(P,Q)$ has no periodic points.

\begin{thm}[{\cite[Theorem 1.1 and Corollary 4.3]{KPW}}]\label{thm:Ruelleduality}
The Ruelle algebras $\mathcal{R}^s(Q)$ and $\mathcal{R}^u(P)$ are Spanier-Whitehead $\KKt$-dual. Moreover, if the $\Kt$-theory group of $\mathcal{S}(Q)$ or $\mathcal{U}(P)$ have finite rank, then $\mathcal{R}^s(Q)\cong \mathcal{R}^u(P)$ and both Ruelle algebras satisfy Poincar{\'e} duality.
\end{thm}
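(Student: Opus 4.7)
The plan is to produce explicit representatives of a fundamental class $\mu \in \KKt_1(\mathcal{R}^s(Q)\otimes\mathcal{R}^u(P),\C)$ and a dual class $\nu \in \KKt_1(\C,\mathcal{R}^s(Q)\otimes\mathcal{R}^u(P))$, verify the two Kasparov product identities of Definition \ref{def:SW-Kduality}, and finally deduce the Poincar\'e duality addendum via Kirchberg--Phillips classification. The standing hypothesis $P\cap Q=\varnothing$ ensures that $X^h(P,Q)$ contains no periodic points, which removes several technical nuisances below.

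For $\mu$, I would exploit the fact that the regular representations of $\mathcal{S}(Q)$ and $\mathcal{U}(P)$ on $\mathscr{H}=\ell^2(X^h(P,Q))$ commute: by the bracket axioms of Definition \ref{defSmaSpa}, a stable holonomy $h^s$ and an unstable holonomy $h^u$ move points along transverse leaves of the local product structure, so their actions on homoclinic pairs commute. This yields a representation of $\mathcal{S}(Q)\otimes\mathcal{U}(P)$ on $\mathscr{H}$. Both crossed product actions $\alpha_s$ and $\alpha_u$ are implemented by the single unitary $u\delta_x=\delta_{\varphi(x)}$, so to accommodate two independent copies of $\mathbb{Z}$ I would dilate to $\mathscr{H}\otimes\ell^2(\mathbb{Z})$ and compress to the Hardy subspace $\mathscr{H}\otimes\ell^2(\mathbb{N})$, representing $u_s$ as $u\otimes S$ and $u_u$ as $u\otimes S^*$, where $S$ is the bilateral shift. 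The commutators with the Hardy projection are of rank one, so after compression the two unitaries commute modulo $\mathcal{K}$, giving an extension
\[
0\to \mathcal{K}\to E \to \mathcal{R}^s(Q)\otimes \mathcal{R}^u(P)\to 0
\]
whose class is $\mu$. For $\nu$, the natural candidate comes from the homoclinic groupoid $\gh=G^s(Q)\cap G^u(P)$ restricted to $X^h(P,Q)$: its $C^*$-algebra embeds diagonally into $\mathcal{S}(Q)\otimes\mathcal{U}(P)$, and paired with the unitary implementing the joint $\mathbb{Z}$-action it supplies a $\Kt_1$-cycle that I would take as $\nu$.

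Verifying $\nu\otimes_{\mathcal{R}^u(P)}\mu=1_{\mathcal{R}^s(Q)}$ and $\nu\otimes_{\mathcal{R}^s(Q)}\mu=-1_{\mathcal{R}^u(P)}$ would then proceed by tracing these representatives through the Kasparov product and showing that contracting out one factor collapses the Toeplitz extension onto the identity Kasparov cycle of the remaining Ruelle algebra, up to Morita equivalence and compact perturbation; the minus sign is forced by the odd-degree orientation of the Hardy projection. For the Poincar\'e duality addendum, recall that the Ruelle algebras are simple, separable, nuclear, purely infinite and in the UCT class (Subsection \ref{sec:Smale_C_alg}), so by Kirchberg--Phillips an isomorphism of $\Kt$-theory groups lifts to a $C^*$-isomorphism. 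The $\KKt$-duality together with the slant product \eqref{eq:slantprod1} yields $\Kt_i(\mathcal{R}^s(Q))\cong \KKt_{i+1}(\mathcal{R}^u(P),\C)$; when $\Kt_*(\mathcal{S}(Q))$ or $\Kt_*(\mathcal{U}(P))$ has finite rank, Pimsner--Voiculescu transports this to the Ruelle algebras and a UCT computation collapses the $\KKt$-theoretic duality to an isomorphism $\Kt_*(\mathcal{R}^s(Q))\cong \Kt_*(\mathcal{R}^u(P))$, yielding the desired $C^*$-isomorphism. Transporting the duality pair $(\mu,\nu)$ along this isomorphism is then precisely Poincar\'e duality.

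The main obstacle will be verifying the two Kasparov product identities from the explicit cycles above, since the Kasparov product has no closed-form description and normally demands either an unbounded Connes--Skandalis connection argument or a careful invocation of Kasparov's Technical Theorem adapted to the Toeplitz dilation. A cleaner alternative would be to reduce to a known duality pair via a bivariant $\KKt$-equivalence (such as the SFT case of \cite{KP}), but this would obscure the explicit geometric representative of $\mu$ that the rest of the paper is designed to lift to a $\theta$-summable Fredholm module.
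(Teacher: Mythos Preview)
The paper does not prove this theorem; it is quoted verbatim from \cite[Theorem 1.1 and Corollary 4.3]{KPW} as a preliminary result. What the paper \emph{does} reproduce from \cite{KPW} is the construction of the fundamental class $\Delta$ as an extension (Subsection~\ref{sec:K-duality_Ruelle}), and this already differs from your proposed $\mu$ in a way that matters.

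Your construction of $\mu$ rests on the claim that the regular representations of $\mathcal{S}(Q)$ and $\mathcal{U}(P)$ on $\mathscr{H}$ commute. They do not. Lemma~\ref{lem:stableunstablerankone} says the products $ab$ and $ba$ are compact, and Lemma~\ref{lem:plusinftyKPW} says the commutators $[\alpha_s^n(a),b]$ tend to zero only as $n\to+\infty$; neither gives commutation on the nose. Stable and unstable holonomies move points along transverse leaves, but composing a stable holonomy with an unstable one in the two orders lands at genuinely different points in general. This is precisely why the actual \cite{KPW} construction (equations~\eqref{eq:inflatedstableRuelle}--\eqref{eq:inflatedusntableRuelle}) does \emph{not} simply tensor by a shift: the stable side is inflated as $a\mapsto\bigoplus_{n\in\mathbb Z}\alpha_s^n(a)$, and commutation modulo compacts then follows from the asymptotic commutation of Lemmas~\ref{lem:stableunstablerankone}--\ref{lem:plusinftyKPW}, not from a Hardy-projection mechanism. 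Your Toeplitz compression produces rank-one commutators between the two unitaries, but that alone does not force $[\overline{\rho_s}(a),\overline{\rho_u}(b)]$ to be compact.

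You are right that the hard work is the verification of the two Kasparov-product identities, and that this is where any proof sketch is necessarily incomplete. The paper does not reproduce that argument at all; it simply invokes \cite{KPW} and moves on to its real goal, which is the $\theta$-summable Fredholm-module lift of $\Delta$. Your remarks on the Poincar\'e duality addendum via Kirchberg--Phillips and the UCT are in the right spirit and match the reasoning behind \cite[Corollary 4.3]{KPW}.
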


\begin{remark}
We note that Proietti and Yamashita \cite{PY} recently announced a proof that the $\Kt$-theory of both $\mathcal{S}(Q)$ or $\mathcal{U}(P)$ have finite rank, thereby removing that hypothesis from the above theorem.
\end{remark}

Philosophically, the $\KKt$-duality in \cite{KPW} builds on the previous $\KKt$-duality result of Kaminker and Putnam \cite{KP} for Cuntz-Krieger algebras. However, the two approaches are completely different. Cuntz-Krieger algebras are quotients of Toeplitz algebras which admit representations on Fock spaces, and this is used heavily in the construction of the fundamental class. On the other hand, general Ruelle algebras do not have such an a priori tractable description. Nevertheless, it seems possible that they admit interesting Cuntz-Pimsner models, but at the moment this is only known in a few specific cases \cite{DGMW}.

Using the open subsets of $G^s(Q), G^u(P)$ of Lemma \ref{stable nbhd}, we begin with the following straightforward result. 

\begin{lemma}[{\cite[Lemma 6.1]{KPW}}]\label{lem:stableunstablerankone}
Suppose $a\in C_c(G^s(Q))$ and $b\in C_c(G^u(P))$ such that $\supp(a)\subset V^s(v,w,h^s,\eta,N)$ and $\supp(b)\subset V^u(v',w',h^u,\eta',N')$, then $\rank(ab)$ and $\rank(ba)$ are at most one. Consequently, for every $a \in \mathcal{S}(Q)$ and $b \in \mathcal{U}(P)$ we have that the products $ab$ and $ba$ are in $\mathcal{K}(\mathscr{H})$.
\end{lemma}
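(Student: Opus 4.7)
The plan is to first establish the rank bound by a direct computation on the orthonormal basis $\{\delta_x\}_{x\in X^h(P,Q)}$ of $\mathscr{H}$, using the basis-action formula for $a$ and $b$ recorded just above. Then, to deduce that $ab$ and $ba$ are compact for arbitrary $a\in \mathcal{S}(Q)$ and $b\in \mathcal{U}(P)$, I would combine a partition-of-unity decomposition into functions supported in basic $V^s(\cdot)$ and $V^u(\cdot)$ neighborhoods with the norm-density of $C_c(G^s(Q))$ in $\mathcal{S}(Q)$ and of $C_c(G^u(P))$ in $\mathcal{U}(P)$.

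For $ab\delta_x$, applying the basis-action formula twice gives
\[
ab\delta_x \;=\; a(h^s(h^u(x)),h^u(x))\,b(h^u(x),x)\,\delta_{h^s(h^u(x))},
\]
with the convention that this vanishes unless $x \in X^s(w',\eta')$ (so that $b\delta_x\neq 0$) and $h^u(x)\in X^u(w,\eta)$ (so that $a$ acts nontrivially on $\delta_{h^u(x)}$). Since $h^u$ maps its domain into $X^s(v',\varepsilon_X/2)$, the nonvanishing condition forces
\[
h^u(x) \;\in\; X^u(w,\eta) \,\cap\, X^s(v',\varepsilon_X/2).
\]
The key geometric observation is that this intersection contains at most one point: provided $\eta\leq \varepsilon_X/2$, any common point $z$ satisfies $d(w,v')\leq d(w,z)+d(z,v')\leq \varepsilon_X$, so the bracket $[v',w]$ is defined, and by the local product structure it is the unique element of $X^u(w,\varepsilon_X/2)\cap X^s(v',\varepsilon_X/2)$, forcing $z=[v',w]$. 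Since $h^u$ is injective on $X^s(w',\eta')$, at most one $x_0$ satisfies $h^u(x_0)=[v',w]$, so the range of $ab$ is contained in $\mathbb{C}\,\delta_{h^s([v',w])}$, which gives $\rank(ab)\leq 1$. The computation for $ba$ is symmetric: the corresponding intersection becomes $X^u(v,\varepsilon_X/2)\cap X^s(w',\eta')$ and reduces to the single point $[w',v]$.

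For the second claim, every $f\in C_c(G^s(Q))$ can be written as a finite sum $\sum_i f_i$ with each $f_i$ supported in a basic neighborhood of the form $V^s(\cdot)$, via a partition of unity subordinate to a finite open cover of $\supp(f)$ by such sets, and similarly on $G^u(P)$. Bilinearity then writes $ab$, for $a\in C_c(G^s(Q))$ and $b\in C_c(G^u(P))$, as a finite sum of operators each of rank at most one, hence finite rank and compact. Since multiplication is jointly continuous in operator norm and $C_c(G^s(Q))$, $C_c(G^u(P))$ are norm-dense in $\mathcal{S}(Q)$ and $\mathcal{U}(P)$ respectively, $ab$ and $ba$ are norm limits of compact operators and therefore lie in $\mathcal{K}(\mathscr{H})$.

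The main obstacle will be the geometric bookkeeping of the intersection: a priori the nonvanishing condition on $x$ is a simultaneous constraint on stable and unstable holonomies, and only the local product structure of the Smale space, together with the smallness of $\eta,\eta'$ relative to $\varepsilon_X$, collapses what could be a large domain of $ab$ onto the graph of a single point-to-point assignment, yielding the rank bound.
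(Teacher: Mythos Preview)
The paper does not include a proof of this lemma; it is simply quoted from \cite[Lemma~6.1]{KPW}. Your argument is correct and is essentially the standard one: compute $ab\delta_x$ via the basis-action formula, observe that nonvanishing forces $h^u(x)$ into the intersection of a local unstable set of $w$ with a local stable set of $v'$, and use the local product structure to see this intersection is at most a single point, so the range of $ab$ lies in a one-dimensional subspace. The extension to general $a\in\mathcal{S}(Q)$, $b\in\mathcal{U}(P)$ via partitions of unity and density is exactly right.

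One small technical remark: you invoke the hypothesis $\eta\leq\varepsilon_X/2$ to conclude that $[v',w]$ is defined and equals the unique intersection point, but the lemma as stated does not impose this bound. This is harmless for two reasons. First, the neighbourhood-base sets $V^s(v,w,h^s,\eta,N)$ can always be refined to ones with $\eta$ as small as you like, and for the ``Consequently'' clause (which is all that is used later) you may choose your partition of unity subordinate to such a refined cover. Second, the underlying fact that $X^u(w,\varepsilon_X)\cap X^s(v',\varepsilon_X)$ has at most one point holds without any extra smallness assumption: if $z_1,z_2$ both lie in this intersection then $[z_1,w]=w=[z_2,w]$ and $[v',z_1]=v'=[v',z_2]$, and the bracket axioms force $z_1=z_2$. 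Either route closes the gap, so your proof stands.
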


The hyperbolic nature of the dynamics, along with the fact that $P\cap Q=\varnothing$ leads to the following lemma. Recall the definition of the automorphism $\alpha_s$ from \eqref{eq:groupoid_autom}.

\begin{lemma}[{\cite[Lemma 6.2]{KPW}}]\label{lem:minusinftylimit}
Suppose $a\in C_c(G^s(Q))$ and $b\in C_c(G^u(P))$ such that $\supp(a)\subset V^s(v,w,h^s,\eta,N)$ and $\supp(b)\subset V^u(v',w',h^u,\eta',N')$, then there exists $M\in \mathbb N$ such that $\alpha_s^{-n}(a)b=b\alpha_s^{-n}(a)=0$, for $n\geq M$. Consequently, for every $a \in \mathcal{S}(Q)$ and $b \in \mathcal{U}(P)$ we have that
\[
\lim_{n\to +\infty} \alpha_s^{-n}(a)b=0 \,\,\,\, \text{and} \,\,\,\, \lim_{n\to +\infty} b\alpha_s^{-n}(a)=0.
\]
\end{lemma}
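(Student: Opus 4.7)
The plan is to exploit the hyperbolic dynamics together with the assumption $P \cap Q = \varnothing$, which forces $X^u(P) \cap X^u(Q) = \varnothing$: two distinct periodic points cannot be unstably equivalent, for if $p \sim_u q$ with $p \neq q$ of periods $k_p, k_q$, then $d(\varphi^{-m}(p),\varphi^{-m}(q)) \to 0$, and taking $m$ a common multiple of $k_p, k_q$ yields $d(p,q) = 0$. First, using the formula \eqref{eq:groupoid_autom} iterated together with the support assumptions on $a$ and $b$, one unpacks
\[ \alpha_s^{-n}(a) b\,\delta_x = b(h^u(x),x)\cdot\alpha_s^{-n}(a)\,\delta_{h^u(x)}, \]
which is nonzero precisely when both $x \in X^s(w',\eta') \cap X^h(P,Q)$ and $\varphi^n(h^u(x)) \in X^u(w,\eta)$.

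The key geometric observation is that $h^u(x) \sim_s x$: since $h^u$ is the stable holonomy for the unstable groupoid, given by $h^u(x) = \varphi^{N'}[\varphi^{-N'}(x),\varphi^{-N'}(v')]$, the bracket preserves the stable class of its first argument, and $\varphi^{N'}$ preserves stable classes. Since $x \sim_s w' \sim_s p_0$ for the unique periodic $p_0 \in P$ with $w' \in X^s(p_0)$, axiom C1 yields the uniform bound $d(\varphi^n(h^u(x)),\varphi^n(p_0)) \leq \lambda^{-n}(\ep_X+\eta') + d(\varphi^n(w'),\varphi^n(p_0))$, which tends to zero uniformly in $x \in X^s(w',\eta')$. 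Suppose for contradiction that $\alpha_s^{-n_k}(a) b \neq 0$ along some $n_k \to \infty$, with witnesses $x_k \in X^s(w',\eta') \cap X^h(P,Q)$ satisfying $\varphi^{n_k}(h^u(x_k)) \in X^u(w,\eta)$. Passing to a subsequence by pigeonhole on residues modulo the period of $p_0$, we may assume $\varphi^{n_k}(p_0) = p^*$ is constant, so $\varphi^{n_k}(h^u(x_k)) \to p^* \in P$. The conditions $d(\,\cdot\,,w) \leq \eta$ and $[w,\,\cdot\,]=w$ are both closed, so by continuity of the bracket the limit satisfies $d(p^*,w) \leq \eta \leq \ep_X$ and $p^* \sim_u w$. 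As $w \sim_u q$ for some $q \in Q$, transitivity of $\sim_u$ gives $p^* \sim_u q$, and the opening observation forces $p^* = q \in P \cap Q = \varnothing$, a contradiction. Hence the required $M$ exists.

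The companion identity $b\,\alpha_s^{-n}(a) = 0$ for $n \geq M$ follows by taking adjoints and applying the above to $a^* \in C_c(G^s(Q))$ and $b^* \in C_c(G^u(P))$, whose supports lie in sets of the same form $V^s, V^u$ with the inverse holonomies. The consequence for arbitrary $a \in \mathcal{S}(Q)$ and $b \in \mathcal{U}(P)$ then follows by a standard $\epsilon/3$ approximation: finite sums of compactly supported elements with support in basic sets $V^s, V^u$ are dense, $\|\alpha_s^{-n}\| = 1$, and all cross terms vanish for $n$ sufficiently large. The main delicacy is in the middle paragraph, where one must verify that the compactness and continuity arguments are robust to simultaneous variation of $n_k$ and $x_k$; this is precisely what the uniformity of the bound $d(\varphi^n(h^u(x_k)),\varphi^n(p_0)) \to 0$ in $x_k$ provides.
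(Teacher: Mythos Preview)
The paper does not include its own proof of this lemma; it is quoted from \cite{KPW}. Your overall strategy is the right one: show that $\varphi^n(h^u(x))$ accumulates on $P$ uniformly in $x$, while membership in $X^u(w,\eta)$ forces unstable equivalence with some $q\in Q$; then the observation that distinct periodic points cannot be unstably equivalent, together with $P\cap Q=\varnothing$, gives the contradiction.

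However, the step ``$h^u(x)\sim_s x$'' is wrong, and the formula you wrote for $h^u$ has the bracket arguments reversed. By definition $(h^u(x),x)\in G^u(P)$, which means $h^u(x)\sim_u x$, not $\sim_s$. Dualising the displayed formula for $h^s$, the unstable holonomy is $h^u(x)=\varphi^{N'}[\varphi^{-N'}(v'),\varphi^{-N'}(x)]$, so that $h^u(x)\in X^u(x)\cap X^s(v')$; in particular $h^u$ maps $X^s(w',\eta')$ into $X^s(v',\ep_X/2)$. Your chain ``$h^u(x)\sim_s x\sim_s w'\sim_s p_0$'' therefore breaks: in general $v'\not\sim_s w'$ (indeed $v'\sim_u w'$, and both relations together would force $v'=w'$). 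The repair is immediate and in fact cleaner than what you wrote: since $h^u(x)\in X^s(v',\ep_X/2)$ for every admissible $x$, and $v'\in X^s(P)$ so $v'\sim_s p_0$ for some $p_0\in P$, one gets directly the uniform estimate
\[
d(\varphi^n(h^u(x)),\varphi^n(p_0))\leq \lambda^{-n}\ep_X/2 + d(\varphi^n(v'),\varphi^n(p_0))\to 0,
\]
and your pigeonhole/limit argument then goes through verbatim with $v'$ in place of $w'$. (The iterated use of axiom C1 here should also be justified by noting that $\varphi(X^s(y,\ep))\subset X^s(\varphi(y),\lambda^{-1}\ep)$, so one stays inside local stable sets.)
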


The next lemma will be modified to our needs to construct a Fredholm module representing the fundamental class of \cite{KPW}. We record it here as a point of reference.

\begin{lemma}[{\cite[Lemma 6.3]{KPW}}]\label{lem:plusinftyKPW}
For any $a\in \mathcal{S}(Q)$ and $b\in \mathcal{U}(P)$, we have 
\begin{align*}
&\lim_{n\to +\infty}\|\alpha_s^n(a)b-b\alpha_s^n(a)\|=0,\\
&\lim_{n\to +\infty}\|\alpha_s^n(a)\alpha_u^{-n}(b)-\alpha_u^{-n}(b)\alpha_s^n(a)\|=0.
\end{align*}
\end{lemma}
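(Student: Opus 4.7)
Both claims collapse into one after conjugating by the Koopman unitary $u\in \mathcal{B}(\mathscr{H})$, which implements $\alpha_s(a)=uau^*$ on $\mathcal{S}(Q)$ and $\alpha_u(b)=ubu^*$ on $\mathcal{U}(P)$. A direct computation gives
\[
u^{-n}\bigl(\alpha_s^n(a)b-b\alpha_s^n(a)\bigr)u^n = [a,\alpha_u^{-n}(b)] \quad\text{and}\quad u^{-n}\bigl(\alpha_s^n(a)\alpha_u^{-n}(b)-\alpha_u^{-n}(b)\alpha_s^n(a)\bigr)u^n = [a,\alpha_u^{-2n}(b)],
\]
so both follow from $\lim_{n\to\infty}\|[a,\alpha_u^{-n}(b)]\|=0$. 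By linearity, density, and the uniform bound $\|[a,\alpha_u^{-n}(b)]\|\leq 2\|a\|\|b\|$, I reduce to $a,b\in C_c$ with $\supp(a)\subset V^s(v,w,h^s,\eta,N)$ and $\supp(b)\subset V^u(v',w',h^u,\eta',N')$.

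By axiom B4, for $n\geq N'$ and on the region where the bracket is defined, the support of $\alpha_u^{-n}(b)$ lies in a $V^u$-type set centered at $(\varphi^{-n}(v'),\varphi^{-n}(w'))$, so Lemma \ref{lem:stableunstablerankone} yields that both $a\alpha_u^{-n}(b)$ and $\alpha_u^{-n}(b)a$ have rank at most one. Explicit computation using the regular representation formulas gives
\begin{align*}
a\alpha_u^{-n}(b)\delta_x &= b\bigl(h^u(\varphi^n x),\varphi^n x\bigr)\cdot a\bigl(h^s(z_n(x)),z_n(x)\bigr)\cdot \delta_{h^s(z_n(x))},\\
\alpha_u^{-n}(b)a\delta_x &= a\bigl(h^s(x),x\bigr)\cdot b\bigl(h^u(\varphi^n h^s(x)),\varphi^n h^s(x)\bigr)\cdot \delta_{z_n(h^s(x))},
\end{align*}
where, after unwinding the definitions of the holonomies and applying axiom B4, $z_n(y):=\varphi^{-n}h^u\varphi^n(y)=[\varphi^{-n}(v'),y]$ and $h^s(y)=[y,v]$. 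Bracket axioms B2 and B3 then yield
\[
h^s(z_n(x))=[[\varphi^{-n}(v'),x],v]=[\varphi^{-n}(v'),v]=[\varphi^{-n}(v'),[x,v]]=z_n(h^s(x)),
\]
so the output vectors of the two rank-one operators coincide \emph{exactly}. Contraction axioms C1 and C2 give $d(z_n(x),x)=O(\lambda^{-n})$ and $d(\varphi^n h^s(x),\varphi^n(x))=O(\lambda^{-n})$ on the relevant compact sets, and uniform continuity of $a$ and $b$ on their compact supports then forces the scalar coefficients to differ by $o(1)$.

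The main obstacle is ensuring the two rank-one operators act on the \emph{same} input $\delta_x$ — if they did not, the commutator norm would be $\sqrt{|c_1|^2+|c_2|^2}$ rather than $|c_1-c_2|$, and would not tend to zero. The two nontriviality conditions ($z_n(x)\in X^u(w,\eta)$ together with $\varphi^n x\in X^s(w',\eta')$, versus $x\in X^u(w,\eta)$ together with $\varphi^n h^s(x)\in X^s(w',\eta')$) coincide for large $n$ modulo a shrinking boundary region, by the same distance estimates. Approximating $a,b$ in supremum norm by functions that vanish in a small neighborhood of $\partial\supp(a)$ and $\partial\supp(b)$ controls this boundary error uniformly in $n$, completing the argument.
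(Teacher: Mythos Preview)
The paper does not prove this lemma; it is quoted verbatim from \cite[Lemma~6.3]{KPW} as a point of reference, so there is no proof in the present paper to compare against. Your overall strategy---reduce by conjugation to $[a,\alpha_u^{-n}(b)]$, show the two ``shift--multiply'' operators share the same output basis vector, and then compare scalar coefficients via uniform continuity---is indeed the shape of the argument in \cite{KPW}. However, two steps in your write-up are genuinely wrong as stated.

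First, the rank-one claim fails. Applying $\alpha_u^{-n}$ for $n>0$ does \emph{not} keep $b$ supported in a basic $V^u$-neighbourhood: the source of $\alpha_u^{-n}(b)$ is $\varphi^{-n}(\text{source}(b))\subset\varphi^{-n}(X^s(w',\eta'))$, which is a long segment of the global stable set of $\varphi^{-n}(w')$, not a local stable set. So Lemma~\ref{lem:stableunstablerankone} does not apply, and in fact $\rank\bigl(a\,\alpha_u^{-n}(b)\bigr)$ grows with $n$ (compare the rank estimates in the proof of Lemma~\ref{lem:KK_1_lift_ranks}). Fortunately this is irrelevant to the norm estimate: what you actually need is that each of $a\,\alpha_u^{-n}(b)$ and $\alpha_u^{-n}(b)\,a$ sends $\delta_x$ to a scalar multiple of a single $\delta_{f(x)}$ with $f$ injective on its support. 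For such operators $\|T\|=\sup_x|c(x)|$, and if the two shifts agree then the commutator norm is $\sup_x|c_1(x)-c_2(x)|$. Rewrite your argument in these terms and drop the rank-one appeal.

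Second, the identity $z_n(y)=[\varphi^{-n}(v'),y]$ is not literally valid: for generic $y$ the bracket on the right is undefined because $d(\varphi^{-n}(v'),y)$ need not be $\leq\varepsilon_X$, so the B2/B3 manipulation is formal. The statement you actually want, that $h^s\circ z_n=z_n\circ h^s$ on the relevant domain for $n$ large, is correct but requires a careful local argument tracking which local stable/unstable leaves the two points lie on; this is precisely \cite[Lemma~2.2]{Putnam_algebras}, invoked later in this paper in the proof of Lemma~\ref{lem:KK_1_lift_convergence} (see equation~\eqref{eq:KK_1_lift_convergence_2}). Cite that, or reproduce its proof, rather than the bracket heuristic. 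With these two fixes the boundary-approximation device becomes unnecessary: uniform continuity of $a$ and $b$ already handles the case where one coefficient vanishes and the other does not.
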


To define the fundamental class in \cite{KPW}, the main idea is to find  faithful representations of both Ruelle algebras on the same Hilbert space, which commute modulo compacts, but their non-zero products are never compact. To this end, consider the inflated representation $\overline{\rho_{s}}:\mathcal{R}^s(Q)\to \mathcal{B}(\mathscr{H}\otimes \ell^2(\mathbb Z))$ given, for $a\in \mathcal{S}(Q)$, by 
\begin{equation}\label{eq:inflatedstableRuelle}
a\mapsto \bigoplus_{n\in \mathbb Z} \alpha_s^n(a)\,\,\,\, \text{and} \,\,\,\, u\mapsto 1\otimes B,
\end{equation}
where $u$ is the unitary given by $u\delta_x=\delta_{\varphi(x)}$ and $B$ is the left bilateral shift given by $B\delta_n=\delta_{n-1}$. Moreover, let $\overline{\rho_{u}}:\mathcal{R}^u(P)\to \mathcal{B}(\mathscr{H}\otimes \ell^2(\mathbb Z))$ be the representation given, for $b\in \mathcal{U}(P)$, by
\begin{equation}\label{eq:inflatedusntableRuelle}
b\mapsto b\otimes 1 \,\,\,\, \text{and} \,\,\,\, u\mapsto u\otimes B^*.
\end{equation}
These representations are faithful and satisfy $[\overline{\rho_s}(a),\overline{\rho_u}(u)]=0$, $[\overline{\rho_u}(b),\overline{\rho_s}(u)]=0$ and $[\overline{\rho_s}(u), \overline{\rho_u}(u)]=0$. Thus, Lemmas \ref{lem:stableunstablerankone}, \ref{lem:minusinftylimit} and \ref{lem:plusinftyKPW} yield the following lemma.

\begin{lemma}\label{lem:commutationRuelles}
The $C^*$-algebras $\overline{\rho_s}(\mathcal{R}^s(Q))$ and $\overline{\rho_u}(\mathcal{R}^u(P))$ commute modulo compact operators on $\mathscr{H}\otimes \ell^2(\mathbb Z).$
\end{lemma}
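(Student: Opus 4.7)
The plan is to reduce the statement to a verification on generators. Since $\mathcal{R}^s(Q)$ is generated as a $C^*$-algebra by $\overline{\rho_s}(\mathcal{S}(Q))$ together with $\overline{\rho_s}(u)$, and similarly for $\mathcal{R}^u(P)$, and since the map $(x,y) \mapsto [x,y] + \mathcal{K}(\mathscr{H}\otimes \ell^2(\mathbb Z))$ is bilinear, $*$-continuous, jointly norm-continuous, and obeys the Leibniz identity (with $\mathcal{K}$ being a closed two-sided ideal), it suffices to verify that each pair of generators commutes modulo compacts. Three of the four such pairs have already been observed to commute exactly on the nose. Hence, the only work is to establish compactness of the commutator
\[
[\overline{\rho_s}(a), \overline{\rho_u}(b)] \in \mathcal{K}(\mathscr{H}\otimes \ell^2(\mathbb Z)), \qquad a\in \mathcal{S}(Q),\ b\in \mathcal{U}(P).
\]

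For this, I would compute the commutator directly using the explicit forms in \eqref{eq:inflatedstableRuelle} and \eqref{eq:inflatedusntableRuelle}. Since $\overline{\rho_s}(a) = \bigoplus_{n\in\mathbb Z}\alpha_s^n(a)$ is block-diagonal in the decomposition $\mathscr{H}\otimes \ell^2(\mathbb Z) = \bigoplus_{n\in \mathbb Z}\mathscr{H}$, and $\overline{\rho_u}(b) = b\otimes 1$ acts as $b$ on each summand, I obtain
\[
[\overline{\rho_s}(a), \overline{\rho_u}(b)] = \bigoplus_{n\in\mathbb Z}\bigl[\alpha_s^n(a),\, b\bigr].
\]
A standard fact about block-diagonal operators is that such a direct sum is compact if and only if each block is compact and their operator norms tend to zero as $|n|\to \infty$. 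I would verify these two conditions separately.

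Compactness of each block $[\alpha_s^n(a),b]$ follows immediately from Lemma \ref{lem:stableunstablerankone}, noting that $\alpha_s^n(a)\in \mathcal{S}(Q)$ for all $n\in \mathbb Z$, so that both products $\alpha_s^n(a)b$ and $b\alpha_s^n(a)$ lie in $\mathcal{K}(\mathscr{H})$. For the norm decay, I split into $n\to +\infty$ and $n\to -\infty$. As $n\to +\infty$, Lemma \ref{lem:plusinftyKPW} directly yields $\|[\alpha_s^n(a),b]\|\to 0$. As $n\to +\infty$ with $-n$ in place of $n$, the triangle inequality together with Lemma \ref{lem:minusinftylimit} gives
\[
\bigl\|[\alpha_s^{-n}(a),b]\bigr\| \leq \bigl\|\alpha_s^{-n}(a)b\bigr\| + \bigl\|b\alpha_s^{-n}(a)\bigr\| \longrightarrow 0.
\]
Combining these, $\|[\alpha_s^n(a),b]\|\to 0$ as $|n|\to\infty$, completing the argument.

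The main subtlety is the two-sided nature of the limit: the three preparatory lemmas are precisely calibrated to supply compactness of blocks (from the rank-one lemma) and norm decay in both directions (the minus-infinity lemma for $n\to-\infty$, the plus-infinity lemma for $n\to+\infty$), reflecting the asymmetry between stable and unstable contraction under $\varphi$. Once these are in hand, everything else is a formal closure argument.
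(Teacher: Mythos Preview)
Your proof is correct and follows essentially the same approach as the paper: the paper simply notes that the three generator pairs involving $u$ commute exactly and then invokes Lemmas~\ref{lem:stableunstablerankone}, \ref{lem:minusinftylimit}, and \ref{lem:plusinftyKPW} for the remaining commutator $[\overline{\rho_s}(a),\overline{\rho_u}(b)]$. You have spelled out the block-diagonal computation and the two-sided norm decay that the paper leaves implicit, but the argument is the same.
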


\begin{lemma}[{\cite[Lemma 4.4.13]{Whittaker_PhD}}]\label{notzeroext}
There are $a\in \mathcal{S}(Q)$ and $b\in \mathcal{U}(P)$ such that the operator $\overline{\rho_s}(a)\overline{\rho_u}(b)$ is not compact.
\end{lemma}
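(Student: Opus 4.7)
The plan is to exploit the direct-sum structure of the representations. Since $\overline{\rho_u}(b)=b\otimes 1$ acts as $b$ on every copy of $\mathscr{H}$ and $\overline{\rho_s}(a)=\bigoplus_{n\in \mathbb Z}\alpha_s^n(a)$, we have
\[
\overline{\rho_s}(a)\,\overline{\rho_u}(b) \;=\; \bigoplus_{n\in \mathbb Z}\alpha_s^n(a)\,b
\]
on $\mathscr{H}\otimes \ell^2(\mathbb Z)$. A direct sum of bounded operators can be compact only if the norms of its summands tend to zero, so it suffices to exhibit $a\in \mathcal{S}(Q)$ and $b\in \mathcal{U}(P)$ together with an infinite sequence $n_k\to +\infty$ along which $\|\alpha_s^{n_k}(a)\,b\|$ stays bounded away from zero.

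I would take $a$ and $b$ to be diagonal bumps based at a common homoclinic point. Pick $v_0\in X^h(P,Q)$ (nonempty since $X^h(P,Q)$ is dense in $X$) and let $Y\subseteq X$ denote the clopen mixing piece containing $v_0$ supplied by Smale's Decomposition Theorem \ref{thm: Smale decomposition}: some iterate $\varphi^k$ is topologically mixing on $Y$. For $\eta\in (0,\varepsilon_X/2)$ small, the set $\{(x,x):x\in X^u(v_0,\eta)\}$ is an open bisection of $G^s(Q)$ of the form $V^s(v_0,v_0,\mathrm{id},\eta,0)$ (since $v_0\in X^u(Q)$ and the bracket axioms give $[x,v_0]=x$ for $x\in X^u(v_0,\eta)$), and similarly $\{(y,y):y\in X^s(v_0,\eta)\}$ is an open bisection of $G^u(P)$. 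Choose a continuous $f\colon X\to [0,1]$ with $f(v_0)=1$ whose support lies in $X^u(v_0,\eta)\cap X^s(v_0,\eta)\cap Y$, and set $U'=\{f>1/2\}$. Define $a\in C_c(G^s(Q))$ by $a(x,x)=f(x)$ on the first bisection and zero elsewhere, and $b\in C_c(G^u(P))$ analogously using $f$ on the second bisection.

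The regular representations of $\mathcal{S}(Q)$ and $\mathcal{U}(P)$ send these diagonal elements to diagonal multiplication operators on $\mathscr{H}=\ell^2(X^h(P,Q))$: $a\delta_x=f(x)\delta_x$, $b\delta_x=f(x)\delta_x$, and since $\alpha_s^n(a)=u^nau^{-n}$ with $u\delta_x=\delta_{\varphi(x)}$, also $\alpha_s^n(a)\delta_x=f(\varphi^{-n}(x))\delta_x$ for every $x\in X^h(P,Q)$. Hence $\alpha_s^n(a)\,b$ is diagonal multiplication by $(f\circ \varphi^{-n})\cdot f$, and
\[
\|\alpha_s^n(a)\,b\| \;=\; \sup_{x\in X^h(P,Q)}f(\varphi^{-n}(x))\,f(x) \;\geq\; \tfrac{1}{4}
\]
whenever $U'\cap \varphi^n(U')\cap X^h(P,Q)\neq \varnothing$.

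The remaining task, and the only genuinely dynamical input, is to verify that this nonemptiness holds for infinitely many $n$. Since $\varphi^k$ is mixing on $Y$ and $U'\subseteq Y$ is open and nonempty, there is $N\in \mathbb N$ such that $U'\cap \varphi^{jk}(U')\neq \varnothing$ for every $j\geq N$; this intersection is open in $X$ and so meets the dense set $X^h(P,Q)$. Consequently $\|\alpha_s^{jk}(a)\,b\|\geq 1/4$ for all $j\geq N$, so infinitely many summands of $\overline{\rho_s}(a)\overline{\rho_u}(b)=\bigoplus_n \alpha_s^n(a)\,b$ have norm bounded below, and the operator is not compact. The only substantive hurdle is this final recurrence step, where passage through Smale's decomposition to a mixing iterate combined with density of $X^h(P,Q)$ gives the needed recurrence of homoclinic points near $v_0$; everything else is routine bookkeeping about diagonal bisections in étale groupoid $C^*$-algebras.
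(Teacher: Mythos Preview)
Your reduction to bounding $\|\alpha_s^n(a)b\|$ away from zero along a sequence is sound, and the paper itself gives no proof (it simply cites the second author's thesis). The gap is in the construction of $a$ and $b$. You require a continuous $f:X\to[0,1]$ with $f(v_0)=1$ and $\supp f\subset X^u(v_0,\eta)\cap X^s(v_0,\eta)$, but by the local product structure $X^u(v_0,\eta)\cap X^s(v_0,\eta)=\{v_0\}$, and since an irreducible Smale space has no isolated points, no such nonzero $f$ exists. This support condition is not incidental: your $a$, supported on the diagonal bisection over $X^u(v_0,\eta)$, satisfies $a\delta_x=f(x)\delta_x$ only for $x\in X^u(v_0,\eta)$ and $a\delta_x=0$ otherwise; so for ``$a\delta_x=f(x)\delta_x$ for all $x\in X^h(P,Q)$'' you genuinely need $\supp f\subset X^u(v_0,\eta)$, and symmetrically $\supp f\subset X^s(v_0,\eta)$ from $b$.

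Nor can this be repaired by enlarging the support of $f$ to an open rectangle in $X$: a bump function on $X$ does not restrict to an element of $C_0(X^u(Q))$ in the leaf topology (its support meets infinitely many local unstable plaques), so multiplication by such an $f$ does not lie in $\mathcal{S}(Q)$ unless the stable sets are zero-dimensional. If instead you take honest transversal bumps $g\in C_c(X^u(v_0,\eta))$ and $h\in C_c(X^s(v_0,\eta))$ for $a$ and $b$, the condition $\alpha_s^n(a)b\,\delta_x\neq 0$ becomes $x\in X^s(v_0,\eta)$ with $\varphi^{-n}(x)\in X^u(v_0,\eta)$: this asks for intersections of a local stable leaf with the $\varphi^n$-image of a local unstable leaf, a transversality statement that needs the bracket map and density of global unstable sets, not the open-set recurrence $U'\cap\varphi^n(U')\neq\varnothing$ that your mixing argument supplies.
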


Since Ruelle algebras are nuclear, there is a well-defined map $\tau_{\Delta}: \mathcal{R}^s(Q)\otimes \mathcal{R}^u(P)\to \mathcal{Q}(\mathscr{H}\otimes \ell^2(\mathbb Z))$ given on elementary tensors by
\begin{equation}\label{eq: tau_Delta}
\begin{split}
\tau_{\Delta}(au^j&\otimes bu^{j'})= (\overline{\rho_s}\cdot \overline{\rho_u})(a u^j\otimes b u^{j'})+ \mathcal{K}(\mathscr{H}\otimes \ell^2(\mathbb Z))\\
&=(\sum_{n\in \mathbb Z}\alpha_s^n(a)\otimes e_{n,n})(1\otimes B^j)(b\otimes 1) (u^{j'}\otimes B^{-j'})+ \mathcal{K}(\mathscr{H}\otimes \ell^2(\mathbb Z))\\
&= \sum_{n\in \mathbb Z}\alpha_s^n(a)bu^{j'}\otimes e_{n,n+j-j'} + \mathcal{K}(\mathscr{H}\otimes \ell^2(\mathbb Z))
\end{split}
\end{equation}
where $a\in \mathcal{S}(Q),\, b\in \mathcal{U}(P),\, j,j'\in \mathbb Z$, and $e_{n,m}$ are matrix units. Thus $\tau_{\Delta}$ is an extension (or rather a Busby invariant) of $\mathcal{R}^s(Q)\otimes \mathcal{R}^u(P)$ by the compacts. Since $(X,\varphi)$ is irreducible, $\mathcal{R}^s(Q)\otimes \mathcal{R}^u(P)$ is simple and Lemma \ref{notzeroext} proves that $\tau_{\Delta}$ is not the zero extension.

Alternatively, the extension $\tau_{\Delta}$ is constructed as follows. Let $\mathcal{E}$ be the $C^*$-algebra generated by $\overline{\rho_s}(\mathcal{R}^s(Q)), \, \overline{\rho_u}(\mathcal{R}^u(P))$ and $\mathcal{K}(\mathscr{H}\otimes \ell^2(\mathbb Z))$. Since neither $\overline{\rho_s}(\mathcal{R}^s(Q))$ nor $\overline{\rho_u}(\mathcal{R}^u(P))$ contains non-zero compact operators, Lemma \ref{lem:commutationRuelles} shows that
\begin{equation}
\mathcal{E}/ \mathcal{K}(\mathscr{H}\otimes \ell^2(\mathbb Z)) \cong \mathcal{R}^s(Q)\otimes \mathcal{R}^u(P).
\end{equation}

\begin{definition}[{\cite[Definition 6.6]{KPW}}]\label{def:KPWextclass}
The fundamental class $\Delta$ in the $\KKt$-duality pair from Theorem \ref{thm:Ruelleduality} is represented by the extension
\[
0\rightarrow  \mathcal{K}(\mathscr{H}\otimes \ell^2(\mathbb Z)) \rightarrow \mathcal{E} \rightarrow \mathcal{R}^s(Q)\otimes \mathcal{R}^u(P)\rightarrow 0,
\]
or equivalently, by its Busby invariant $\tau_ {\Delta}:\mathcal{R}^s(Q)\otimes \mathcal{R}^u(P)\to \mathcal{Q}(\mathscr{H}\otimes \ell^2(\mathbb Z))$ (\ref{eq: tau_Delta}).
\end{definition}

\section{Lifting the fundamental class: the symmetric representation and intuition towards a geometric fundamental class}\label{sec:Towards_a_KK_1_lift}

Let $(X,\varphi)$ be an irreducible Smale space and fix two periodic orbits $P$ and $Q$ such that $P\cap Q=\varnothing$. In this section, we untwist the extension $\tau_{\Delta}$ from the previous section and develop intuition into building a geometric Fredholm module representative of the fundamental class $\Delta \in \KKt_1(\mathcal{R}^s(Q)\otimes \mathcal{R}^u(P),\mathbb C)$ from Subsection \ref{sec:K-duality_Ruelle}. More precisely, recalling that $\mathscr{H}=\ell^2(X^h(P,Q))$, we aim to explicitly find:
\begin{enumerate}[(i)]
\item a representation $\rho:\mathcal{R}^s(Q)\otimes \mathcal{R}^u(P)\to \mathcal{B}(\mathcal{H})$ on a Hilbert space $\mathcal{H}$,
\item and isometry $V:\mathscr{H}\otimes \ell^2(\mathbb Z)\to \mathcal{H}$, and
\item a unitary $U\in \mathcal{B}(\mathscr{H}\otimes \ell^2(\mathbb Z))$
\end{enumerate}
such that, for all $x\in \mathcal{R}^s(Q)\otimes \mathcal{R}^u(P)$, we have
\begin{equation}\label{eq:KK_1_lift_equation}
(\ad_{\pi}(U)\circ \tau_{\Delta})(x)=V^*\rho(x)V+\mathcal{K}(\mathscr{H}\otimes \ell^2(\mathbb Z)),
\end{equation}
where $\ad_{\pi}(U)$ is conjugation by $\pi(U)=U+\mathcal{K}(\mathscr{H}\otimes \ell^2(\mathbb Z))$. Using Theorem \ref{thm:Liftingtheorem}, it then follows that $\Delta$ is represented by the odd Fredholm module $(\mathcal{H},\rho,2V V^*-1)$ over $\mathcal{R}^s(Q)\otimes \mathcal{R}^u(P)$. In this section we construct (i), (iii) and give intuition towards (ii). The latter is explicitly constructed in Section \ref{sec:averaging_isom}. Then, in Section \ref{sec:DilateKPW} we prove (\ref{eq:KK_1_lift_equation}) and investigate summability properties of the Fredholm module $(\mathcal{H},\rho,2VV^*-1)$. 

To reiterate, an abstract Fredholm module representation of the fundamental class exists due to the nuclearity of Ruelle algebras, the Choi-Effros Lifting Theorem and Stinespring dilation. However, we need a concrete Fredholm module representation for index computations and summability. To do this requires some brute force and heavy analysis. 

It is fairly straightforward to construct the representation $\rho$ in (i), since it should untwist the extension $\tau_{\Delta}:\mathcal{R}^s(Q)\otimes \mathcal{R}^u(P)\to \mathcal{Q}(\mathscr{H}\otimes \ell^2(\mathbb Z))$ in the sense that we want to remove the interaction between the Ruelle algebras in the image. For this, the Hilbert space $\mathcal{H}=\mathscr{H}\otimes \mathscr{H}\otimes \ell^2(\mathbb Z)$ is sufficiently large. On the other hand, constructing a suitable isometry $V$ is hard. The main source of difficulty is the simultaneous actions of two copies of $\mathbb Z$ taking place in 
\[
\mathcal{R}^s(Q)\otimes \mathcal{R}^u(P)=(\mathcal{S}(Q)\rtimes_{\alpha_s} \mathbb Z)\otimes (\mathcal{U}(P)\rtimes_{\alpha_u} \mathbb Z).
\]
Our method is to construct $V$ through an averaging process that asymptotically commutes with the $\mathbb Z^2$-action, so that $VV^*$ commutes modulo compacts with the coefficient algebra $\mathcal{S}(Q)\otimes \mathcal{U}(P)$. The main ingredient is the refining sequence of $\delta$-enlarged Markov partitions defined in \cite{Gero}, and described here in Theorem \ref{thm:theoremgraphSmalespaces}.

\subsection{The symmetric representation}\label{sec:sym_repr} 

We now present an untwisted version of the extension $\tau_{\Delta}:\mathcal{R}^s(Q)\otimes \mathcal{R}^u(P)\to \mathcal{Q}(\mathscr{H}\otimes \ell^2(\mathbb Z))$. In what follows, we consider operators $a\in \mathcal{S}(Q),\, b\in \mathcal{U}(P)$ and $j,j'\in \mathbb Z$, as well as, matrix units $e_{n,m}$. Also, recall the representations $\overline{\rho_s}$ from (\ref{eq:inflatedstableRuelle}) and $\overline{\rho_u}$ from (\ref{eq:inflatedusntableRuelle}) constructed in Subsection \ref{sec:K-duality_Ruelle}.

Consider the (covariant) representations of $\mathcal{R}^s(Q)$ and $\mathcal{R}^u(P)$ on $\mathscr{H}\otimes \mathscr{H}\otimes \ell^2(\mathbb Z)$ which are respectively given on generators by
\begin{align*}
a\mapsto 1\otimes \bigoplus_{n\in \mathbb Z} \alpha_s^n(a) \enspace &\text{and} \enspace u\mapsto 1\otimes 1\otimes B,\\
b\mapsto b\otimes 1\otimes 1 \enspace &\text{and} \enspace u\mapsto u\otimes u\otimes B^*.
\end{align*}
Note that we put $u\mapsto u\otimes u\otimes B^*$ instead of $u\mapsto u\otimes 1\otimes B^*$ so that the two representations commute. Their product is the representation $\widetilde{\rho}:\mathcal{R}^s(Q)\otimes \mathcal{R}^u(P)\to \mathcal{B}(\mathscr{H}\otimes \mathscr{H}\otimes \ell^2(\mathbb Z))$ given by 
\begin{equation}\label{eq:non_symmetric_rep}
\widetilde{\rho}(au^j\otimes bu^{j'})=\sum_{n\in \mathbb Z} bu^{j'}\otimes \alpha_s^n(a)u^{j'}\otimes e_{n,n+j-j'}.
\end{equation}

The representation (\ref{eq:non_symmetric_rep}) untwists $\tau_{\Delta}$ in \eqref{eq: tau_Delta} by separating the actions of $\overline{\rho_s}(\mathcal{S}(Q))$ and $\overline{\rho_u}(\mathcal{U}(P))$ on $\mathscr{H}\otimes \ell^2(\mathbb Z)$. However, observe that in both \eqref{eq: tau_Delta} and (\ref{eq:non_symmetric_rep}), the dynamics are only applied on $a\in \mathcal{S}(Q)$ and not on $b\in \mathcal{U}(P)$. We aim to fix this asymmetry as this will also facilitate the construction of the isometry $V$. The idea is to take \enquote{half} the dynamics of $a$ and put them on $b$.

Let $U=\bigoplus_{n\in \mathbb Z} u^{-\lfloor n/2 \rfloor}$ and consider the symmetrised extension $\ad_{\pi}(U)\circ \tau_{\Delta}$ that yields the same class as $\tau_{\Delta}$ and is given by 
\begin{align}
\label{eq:tau_Delta_sym}
(\ad_{\pi}(U)\circ &\tau_{\Delta})(au^j\otimes bu^{j'})=(\ad(U)\circ (\overline{\rho_s}\cdot \overline{\rho_u}))(au^j\otimes bu^{j'})+\mathcal{K}(\mathscr{H}\otimes \ell^2(\mathbb Z))\\
\notag
&=\sum_{n\in \mathbb Z} u^{-\lfloor \frac{n}{2} \rfloor }b\alpha_s^{n}(a)u^{j'+\lfloor \frac{n+j-j'}{2}\rfloor}\otimes e_{n,n+j-j'}+\mathcal{K}(\mathscr{H}\otimes \ell^2(\mathbb Z)).
\end{align}
Each of these off-diagonal matrices is the sum of the even-coordinates matrix 
\[
\sum_{n\in \mathbb Z} \alpha_u^{-n}(b)\alpha_s^n(a)u^{j'+\lfloor \frac{j-j'}{2}\rfloor}  \otimes e_{2n,2n+j-j'}+\mathcal{K}(\mathscr{H}\otimes \ell^2(\mathbb Z)),
\]
and the odd-coordinates matrix 
\[
\sum_{n\in \mathbb Z} \alpha_u^{-n}(b)\alpha_s^{n+1}(a)u^{j'+\lfloor \frac{j-j'+1}{2}\rfloor } \otimes e_{2n+1,2n+1+j-j'}+\mathcal{K}(\mathscr{H}\otimes \ell^2(\mathbb Z)).
\]

Similarly, let $W=\bigoplus_{n\in \mathbb Z} u^{-\lfloor n/2 \rfloor}\otimes u^{-\lfloor n/2 \rfloor}$ and consider the \textit{symmetric representation} $\rho=\ad(W)\circ \widetilde{\rho}$ given by 
\begin{equation}\label{eq:symmetric_rep}
\rho(au^j\otimes bu^{j'})= \sum_{n\in \mathbb Z} u^{-\lfloor \frac{n}{2} \rfloor} b u^{j'+\lfloor \frac{n+j-j'}{2}\rfloor } \otimes u^{-\lfloor \frac{n}{2} \rfloor} \alpha_s^n(a) u^{j'+\lfloor \frac{n+j-j'}{2}\rfloor }\otimes e_{n,n+j-j'}.
\end{equation}
For a given generator $au^j\otimes bu^{j'} \in \mathcal{R}^s(Q)\otimes \mathcal{R}^u(P)$, the corresponding even-coordinates matrix is 
\[
\sum_{n\in \mathbb Z} \alpha_u^{-n}(b) u^{j'+\lfloor \frac{j-j'}{2}\rfloor } \otimes \alpha_s^n(a) u^{j'+\lfloor \frac{j-j'}{2}\rfloor }\otimes e_{2n,2n+j-j'},
\]
and the odd-coordinates matrix is 
\[
\sum_{n\in \mathbb Z} \alpha_u^{-n}(b) u^{j'+\lfloor \frac{j-j'+1}{2}\rfloor } \otimes \alpha_s^{n+1}(a) u^{j'+\lfloor \frac{j-j'+1}{2}\rfloor }\otimes e_{2n+1,2n+1+j-j'}.
\]
As before, the representation (\ref{eq:symmetric_rep}) is the untwisted version of (\ref{eq:tau_Delta_sym}).

\subsection{Intuition towards constructing the isometry $V$}\label{sec:intuition_isometry}

This subsection contains an exposition of ideas that eventually lead to the definition of the isometry $V:\mathscr{H}\otimes \ell^2(\mathbb Z)\to \mathscr{H}\otimes \mathscr{H}\otimes \ell^2(\mathbb Z)$. Not all of them are mathematically rigorous. However, we believe their presentation will aid in understanding the proofs in the final section. 

We want to find $V$ that satisfies (\ref{eq:KK_1_lift_equation}), for the symmetric representation $\rho$ in (\ref{eq:symmetric_rep}). Using linearity, and the continuity of $V^*\rho(\cdot )V$, this can be reduced to finding $V$ such that, for all $a\in \mathcal{S}(Q),\, b\in \mathcal{U}(P)$ and $j,j'\in \mathbb Z$,
\begin{equation}\label{eq:intuition_isom_1}
V^*\rho(au^j\otimes bu^{j'})V-(\ad(U)\circ (\overline{\rho_s}\cdot \overline{\rho_u}))(au^j\otimes bu^{j'})\in \mathcal{K}(\mathscr{H}\otimes \ell^2(\mathbb Z)).
\end{equation}
Since the operators $\rho(au^j\otimes bu^{j'})$ and $(\ad(U)\circ (\overline{\rho_s}\cdot \overline{\rho_u}))(au^j\otimes bu^{j'})$ are off-diagonal matrices (and actually in the same off-diagonal), it seems natural to expect $V$ to be diagonal; that is, $V=\bigoplus_{n\in \mathbb Z}V_n$, where each $V_n:\mathscr{H}\to \mathscr{H}\otimes \mathscr{H}$ is an isometry. With this in mind, calculating (\ref{eq:intuition_isom_1}) for a given $au^j\otimes bu^{j'}$, we obtain a rather long expression which can be written as the sum of the even-coordinates matrix 
\begin{equation}\label{eq:intuition_isom_2}
\sum_{n\in \mathbb Z}(V_{2n}^*(\alpha_u^{-n}(b) u^{j'+\lfloor \frac{j-j'}{2}\rfloor } \otimes \alpha_s^n(a) u^{j'+\lfloor \frac{j-j'}{2}\rfloor})V_{2n+j-j'}-\alpha_u^{-n}(b)\alpha_s^n(a)u^{j'+\lfloor \frac{j-j'}{2}\rfloor})\otimes e_{2n,2n+j-j'}
\end{equation}
and the odd-coordinates matrix, involving the terms $\alpha_s^{n+1}(a),\alpha_u^{-n}(b)$, $u^{j'+\lfloor \frac{j-j'+1}{2}\rfloor }$ and $V_{2n+1}^*$, $V_{2n+1+j-j'}$ at the matrix unit $e_{2n+1,2n+1+j-j'}$.

Therefore, to show that for every $au^j\otimes bu^{j'}$ the operator (\ref{eq:intuition_isom_1}) is compact, we simply have to show that, for every $n\in \mathbb Z$,  
\begin{equation}\label{eq:intuition_isom_3}
V_{2n}^*(\alpha_u^{-n}(b) u^{j'+\lfloor \frac{j-j'}{2}\rfloor } \otimes \alpha_s^n(a) u^{j'+\lfloor \frac{j-j'}{2}\rfloor})V_{2n+j-j'}-\alpha_u^{-n}(b)\alpha_s^n(a)u^{j'+\lfloor \frac{j-j'}{2}\rfloor}\in \mathcal{K}(\mathscr{H}),
\end{equation}
and also that
\begin{equation}\label{eq:intuition_isom_4}
\lim_{n\to \pm \infty} \|V_{2n}^*(\alpha_u^{-n}(b) u^{j'+\lfloor \frac{j-j'}{2}\rfloor } \otimes \alpha_s^n(a) u^{j'+\lfloor \frac{j-j'}{2}\rfloor})V_{2n+j-j'}-\alpha_u^{-n}(b)\alpha_s^n(a)u^{j'+\lfloor \frac{j-j'}{2}\rfloor}\|=0.
\end{equation}
Similarly for the odd-coordinates matrix. To simplify the notation, it suffices to prove that, for every $a\in \mathcal{S}(Q),\, b\in \mathcal{U}(P)$ and $i,k,l\in \mathbb Z$, we have 
\begin{equation}\label{eq:intuition_isom_5}
V_{2n+l}^*(\alpha_u^{-n}(b) u^{i} \otimes \alpha_s^n(a) u^{i}) V_{2n+k} -\alpha_u^{-n}(b)\alpha_s^n(a)u^{i}\in \mathcal{K}(\mathscr{H}),
\end{equation}
and also that 
\begin{equation}\label{eq:intuition_isom_6}
\lim_{n\to \pm \infty} \|V_{2n+l}^*(\alpha_u^{-n}(b) u^{i} \otimes \alpha_s^n(a) u^{i}) V_{2n+k} -\alpha_u^{-n}(b)\alpha_s^n(a)u^{i}\|=0.
\end{equation}
Then, the proof for the odd-coordinates matrices follows by applying (\ref{eq:intuition_isom_5}) and (\ref{eq:intuition_isom_6}) to $\alpha_s(a)$ in place of $a$. The sequel will be about finding an isometry $V$ that satisfies (\ref{eq:intuition_isom_5}) and (\ref{eq:intuition_isom_6}).

As we see in (\ref{eq:intuition_isom_5}) and (\ref{eq:intuition_isom_6}), the $\mathbb Z$-actions are not negligible. Consequently, such an isometry $V=\bigoplus_{n\in \mathbb Z}V_n$ has to satisfy the following quasi-invariance properties,
\begin{equation}\label{eq:intuition_isom_7}
\lim_{n\to \pm \infty} \|V_{n+1}-V_n\|=0,
\end{equation}  
and in addition,
\begin{equation}\label{eq:intuition_isom_8}
\lim_{n\to \pm \infty} \|(u\otimes u)V_{n}-V_nu\|=0.
\end{equation}
Then, condition (\ref{eq:intuition_isom_6}) holds if, for every $a\in \mathcal{S}(Q),\, b\in \mathcal{U}(P)$, we have
\begin{equation}\label{eq:intuition_isom_9}
\lim_{n\to \pm \infty} \|V_{2n}^*(\alpha_u^{-n}(b) \otimes \alpha_s^n(a)) V_{2n} -\alpha_u^{-n}(b)\alpha_s^n(a)\|=0.
\end{equation}
To summarise, if we find $V$ that satisfies (\ref{eq:intuition_isom_5}), (\ref{eq:intuition_isom_7}), (\ref{eq:intuition_isom_8}) and (\ref{eq:intuition_isom_9}), then the triple $(\mathscr{H}\otimes \mathscr{H}\otimes \ell^2(\mathbb Z),\rho, 2VV^*-1)$ is an odd Fredholm module representative of the fundamental class $\Delta \in \KKt_1(\mathcal{R}^s(Q)\otimes \mathcal{R}^u(P),\mathbb C)$.

In order to find such an isometry $V$ we need to understand the dynamics of the extension $\ad_{\pi}(U)\circ \tau_{\Delta}$ in (\ref{eq:tau_Delta_sym}). The $2n^{\text{th}}$ coordinate of $(\ad(U)\circ (\overline{\rho_s}\cdot \overline{\rho_u}))(a\otimes b)$ is $\alpha_u^{-n}(b)\alpha_s^n(a)$. If $a,b$ are compactly supported, Lemma \ref{lem:minusinftylimit} implies there is $n_0\in \mathbb N$ so that $\alpha_u^{-n}(b)\alpha_s^n(a)=0$, when $n\leq -n_0.$ Therefore, the interesting part is when $n$ approaches $+\infty$. In this case, whenever $\alpha_u^{-n}(b)\alpha_s^n(a)\neq 0$, the dynamics are fairly complicated since the sources and ranges of $\alpha_u^{-n}(b)$ and $\alpha_s^n(a)$ stretch along large segments of global stable and unstable sets, intersecting in many places, thus creating small rectangular neighbourhoods, like a grid. In fact, as $n$ grows, the neighbourhoods become smaller and more numerous. The symmetric representation $\rho$ untwists this picture by separating stable and unstable sets, and the isometry $V$ has to compress it back to the twisted version, with as small an error as possible. Since the aforementioned rectangular neighbourhoods are small, the compression should be related (locally) to the bracket map of the Smale space.

Now that we have built some intuition about $V$, let us be even less mathematically rigorous but hopefully illuminating. We are looking for isometries $\mathscr{H}\to \mathscr{H}\otimes \mathscr{H}$. Recall that $\mathscr{H}=\ell^2(X^h(P,Q))$ and hence these isometries correspond to inclusions $X^h(P,Q)\hookrightarrow X^h(P,Q) \times X^h(P,Q)$.  However, $X^h(P,Q)$ is just a countable dense subset of $X$, so we can search for topological embeddings $X\hookrightarrow X\times X$ instead. The first embedding that comes to mind is the diagonal embedding, however it is too \enquote{small} for our purposes. Inspired by the easy part of Whitney's Embedding Theorem \cite{Whitney} we proceed as follows.

Let $\mathcal{U}=\{U_1,\ldots,U_{\ell}\}$ be a cover of $X$ by open rectangles, and choose a point $g_r\in U_r$, for all $1\leq r\leq \ell$. Then we have that each $U_r=[X^u(g_r,U_r),X^s(g_r,U_r)]$, where $X^u(g_r,U_r):= X^u(g_r,2\ep'_X)\cap U_r$ and $X^s(g_r,U_r):= X^s(g_r,2\ep'_X)\cap U_r$. Moreover, we have the homeomorphisms $\psi_r:U_r\to  X^u(g_r,U_r)\times X^s(g_r,U_r)$ given by 
\begin{equation}
\psi_r(x)=([x,g_r],[g_r,x]).
\end{equation}
That is, each $\psi_r$ is the inverse of the bracket map at $g_r$, and the family $\{U_r,\psi_r\}_{r=1}^{\ell}$ can be considered as an \textit{abstract foliated atlas} (note that a non-empty intersection of rectangles is again a rectangle). Indeed, the notions of transversality and holonomy maps are well-defined in this setting. Also, due to irreducibility of the Smale space $(X,\varphi)$, the topological dimensions $\dim X^u(y),\, \dim X^s(y)$ are independent of $y\in X$ and $\dim X^u(y)+\dim X^s(y) =\dim X$, see \cite[Proposition 5.29]{DKW}. However, the local leaves $X^u(y,\varepsilon_X),\, X^s(y,\varepsilon_X)$ are in general non-Euclidean. 

\newpage
At this point let us make the following (rather vague) assumptions. First, that $X$ is a topological manifold, and let $m=\dim X,\, m_u=\dim X^u(y),\, m_s=\dim X^s(y)$, for some $y\in X$. Second, that the each homeomorphism $\psi_r$ yields a homeomorphism $\widetilde{\psi_r}:U_r\to \mathbb R^m$ (onto its image) that has the form
\begin{equation}
\widetilde{\psi_r}(x)=(\psi^u_r([x,g_r]),\psi^s_r([g_r,x])),
\end{equation}
where $\psi^u_r:X^u(g_r,U_r) \to \mathbb R^{m_u}$ and $\psi^s_r:X^s(g_r,U_r) \to \mathbb R^{m_s}$.

Consider $\{F_r\}_{r=1}^{\ell}$ to be a partition of unity subordinate to the open cover $\{U_r\}_{r=1}^{\ell}$. Then, if we extend every $\widetilde{\psi_r}$ to be zero outside of $U_r$, the map $\Psi:X\to \mathbb R^{\ell(m+1)}$ defined as 
\begin{equation}\label{eq:top_emb_Psi}
\Psi=(F_1\widetilde{\psi_1},\ldots,F_{\ell}\widetilde{\psi_{\ell}},F_1,\ldots, F_{\ell}), 
\end{equation}
is continuous. Also, the coordinates $(F_1,\ldots, F_{\ell})$ are included to guarantee that $\Psi$ is injective.

With this in mind, and following the discussion so far, we can estimate the form of the desired isometries $\mathscr{H}\to \mathscr{H}\otimes \mathscr{H}$. Consider again the cover $\{U_r\}_{r=1}^{\ell}$ of open rectangles, choose the points $\{g_r\}_{r=1}^{\ell}$ to be in $X^h(P,Q)$, and let $\{F_r\}_{r=1}^{\ell}$ be a partition of unity  subordinate to this cover. Our model isometry is defined on basis vectors as
\begin{equation}\label{eq:model_isom}
\delta_x\mapsto \sum_{r=1}^{\ell} F_r(x)^{1/2}\delta_{[x,g_r]}\otimes \delta_{[g_r,x]}.
\end{equation}
It is well-defined with respect to the \textit{standard convention}; the bracket map returns the empty set for points $x,y$ with $d(x,y)> \varepsilon_X$ and that, the Dirac delta function on the empty set returns zero \cite[p. 281]{KPW}. The fact that it is actually an isometry is proved in Section \ref{sec:averaging_isom}. Now, it is hard to miss that this isometry is similar to the topological embedding $\Psi$ from \eqref{eq:top_emb_Psi}. One difference is that we use square roots of the partition functions, but this is only because $\sum_r F_r(x)=1$. Moreover, we do not have anything corresponding to the coordinates $(F_1,\ldots, F_{\ell})$ of $\Psi$. However, there is no need because \eqref{eq:model_isom} maps onto orthogonal vectors, and using the bracket axioms, injectivity is guaranteed.

\begin{remark}
It is important to mention that the isometry (\ref{eq:model_isom}) is strongly related to the $\varepsilon$-partitions needed to define the $\Kt$-theory duality class of the Spanier-Whitehead $\KKt$-duality between the Ruelle algebras, see \cite[Section 5]{KPW}. We find it intriguing that the same ingredients are needed to define both $\KKt$-duality classes, at least in the form of a Kasparov module.
\end{remark}

Finally, the quasi-invariance properties (\ref{eq:intuition_isom_7}), (\ref{eq:intuition_isom_8}) can be achieved by defining each isometry $V_n$ as the average of \enquote{basic} isometries $\mathscr{H}\to \mathscr{H}\otimes \mathscr{H}$ of the type (\ref{eq:model_isom}), with mutually orthogonal ranges. For this to happen, one has to carefully choose the covers $\{U_r\}_{r=1}^{\ell}$, the set of points $\{g_r\}_{r=1}^{\ell}\subset X^h(P,Q)$ and the partitions of unity $\{F_r\}_{r=1}^{\ell}$. The key tool is Theorem \ref{thm:theoremgraphSmalespaces} which provides refining sequences of $\delta$-enlarged Markov partitions. This is studied in Sections \ref{sec:pou_Markov} and \ref{sec:averaging_isom}.

\section{Lifting the fundamental class: Dynamic partitions of unity and aperiodic samples of Markov partitions}\label{sec:pou_Markov} 

In this section we develop tools for constructing the isometry $V$ of \eqref{eq:KK_1_lift_equation}. Let $(X,\varphi)$ be an irreducible Smale space and $P,Q$ be periodic orbits such that $P\cap Q= \varnothing$. Further, let $(\mathcal{R}_n^{\delta})_{n\geq 0}$ be a refining sequence of $\delta$-enlarged Markov partitions obtained by Theorem \ref{thm:theoremgraphSmalespaces}. 

The first tool will be a sequence of Lipschitz partitions of unity $(\mathcal{F}_n)_{n\geq 0}$ on $X$ associated to the sequence $(\mathcal{R}_n^{\delta})_{n\geq 0}$. The second tool of our construction will be a set $\mathcal{G}\subset X^h(P,Q)$ of dynamically independent points that are chosen from each rectangle in every open cover $\mathcal{R}_n^{\delta}$, which we will call an aperiodic sample. Before constructing these tools recall that $P\cap Q=\varnothing$ and hence the set $X^h(P,Q)$ has no periodic points. For the sequel, it will be convenient to choose a random ordering in every $\mathcal{R}_n^{\delta}$ and write 
\begin{equation}\label{eq:orderingcovers}
\mathcal{R}_n^{\delta}=\{R_{n,k}^{\delta}:1\leq k\leq \# \mathcal{R}_n^{\delta}\}.
\end{equation}

First, define $h_{0,1}(x)=1$, for all $x\in X$, and for every $n\in \mathbb N$ and $1\leq k \leq \# \mathcal{R}_n^{\delta}$ consider the function $h_{n,k}:X\to [0, \infty)$ given by  
\begin{equation}\label{eq:pou1}
h_{n,k}(x)=d(x,X\setminus R_{n,k}^{\delta}).
\end{equation}
Every such function is $1$-Lipschitz by the triangle inequality. Then, for every $n\geq 0$, the family of functions $\mathcal{F}_n=\{F_{n,k}:1\leq k \leq \# \mathcal{R}_n^{\delta}\}$ on $X$, defined as
\begin{equation}\label{eq:pou2}
F_{n,k}(x)=\frac{h_{n,k}(x)}{\sum_j h_{n,j}(x)},
\end{equation}
forms a partition of unity on $X$. Note that $\mathcal{F}_n$ is not subordinate to $\mathcal{R}_n^{\delta}$, unless $X$ is zero-dimensional, because $F_{n,k}(x)>0$ if and only if $x\in R_{n,k}^{\delta}.$ By repeating a simple (but interesting) calculation found in \cite[Proposition 1]{Bell} and Theorem \ref{thm:theoremgraphSmalespaces} we obtain the following.

\begin{prop}\label{prop:Lip_pou_con}
For every $n\geq 0$ and $1\leq k \leq \# \mathcal{R}_n^{\delta}$, the Lipschitz constant of $F_{n,k}$ satisfies $$\Lip(F_{n,k})\leq \frac{2(\# \mathcal{R}_1^{\delta})^2+1}{\Leb(\mathcal{R}_n^{\delta})}.$$
\end{prop}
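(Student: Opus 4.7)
The plan is to follow the standard Bell-type estimate for Lipschitz partitions of unity built from distance-to-complement functions, plugging in the quantitative bounds from Theorem \ref{thm:theoremgraphSmalespaces}. The two pieces of data we will need are the multiplicity bound $\mul(\mathcal{R}_n^\delta) \leq (\#\mathcal{R}_1^\delta)^2$ and the Lebesgue number bound which gives us a uniform positive lower bound on the denominator $S(x) := \sum_j h_{n,j}(x)$.

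First I would record the elementary facts about the $h_{n,j}$: each $h_{n,j}$ is $1$-Lipschitz by the reverse triangle inequality, and $h_{n,j}(x) > 0$ if and only if $x \in R_{n,j}^\delta$ (since $R_{n,j}^\delta$ is open by Theorem \ref{thm:theoremgraphSmalespaces}(1)). Consequently, for any fixed $x \in X$, at most $\mul(\mathcal{R}_n^\delta) \leq (\#\mathcal{R}_1^\delta)^2$ of the numbers $h_{n,j}(x)$ are nonzero, by the very definition of multiplicity. Next, for any $x \in X$ the Lebesgue covering number gives an index $j_0$ with $B(x,\Leb(\mathcal{R}_n^\delta)) \subset R_{n,j_0}^\delta$, whence
\[
S(x) \;\geq\; h_{n,j_0}(x) \;=\; d(x,X\setminus R_{n,j_0}^\delta) \;\geq\; \Leb(\mathcal{R}_n^\delta).
\]

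With these in hand, the standard algebraic manipulation is
\[
F_{n,k}(x)-F_{n,k}(y) \;=\; \frac{h_{n,k}(x)-h_{n,k}(y)}{S(y)} \;+\; \frac{h_{n,k}(x)}{S(x)}\cdot\frac{S(y)-S(x)}{S(y)}.
\]
Using $h_{n,k}(x)/S(x) \leq 1$, $|h_{n,k}(x)-h_{n,k}(y)| \leq d(x,y)$, and the lower bound $S(y) \geq \Leb(\mathcal{R}_n^\delta)$, everything reduces to bounding $|S(x)-S(y)|$. Here I would observe that only the indices $j$ with $x \in R_{n,j}^\delta$ or $y \in R_{n,j}^\delta$ contribute to the difference $S(x)-S(y)$, since on the complement $h_{n,j}(x)=h_{n,j}(y)=0$. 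By the multiplicity bound this is at most $2(\#\mathcal{R}_1^\delta)^2$ indices, and each term contributes at most $d(x,y)$ by $1$-Lipschitzness, giving
\[
|S(x)-S(y)| \;\leq\; 2(\#\mathcal{R}_1^\delta)^2\, d(x,y).
\]

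Putting the two estimates together yields
\[
|F_{n,k}(x)-F_{n,k}(y)| \;\leq\; \frac{d(x,y) + 2(\#\mathcal{R}_1^\delta)^2\, d(x,y)}{\Leb(\mathcal{R}_n^\delta)} \;=\; \frac{2(\#\mathcal{R}_1^\delta)^2+1}{\Leb(\mathcal{R}_n^\delta)}\, d(x,y),
\]
which is the desired Lipschitz bound. There is no real obstacle here: the only subtle point is recognising that the multiplicity bound can be applied to $x$ and $y$ separately and summed (giving the factor $2$), and that the Lebesgue number gives a lower bound on $S(y)$ via a \emph{single} index $j_0$ rather than needing any averaging. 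Combining these exactly reproduces Bell's calculation in our dynamical setting, and the quantitative control on $\mul$ and $\Leb$ supplied by Theorem \ref{thm:theoremgraphSmalespaces} is what turns it into a uniform bound depending only on $\#\mathcal{R}_1^\delta$ and $n$ (through the Lebesgue number).
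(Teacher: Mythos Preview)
Your proof is correct and is precisely the Bell-type calculation that the paper invokes by reference to \cite[Proposition 1]{Bell} together with the multiplicity and Lebesgue-number bounds from Theorem \ref{thm:theoremgraphSmalespaces}. You have simply written out in full what the paper leaves as a citation.
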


We now move on to the construction of the second key tool. For this we require the next definition. Let $\mathcal{R}^{\delta}$ denote $\coprod_{n\geq 0} \mathcal{R}_n^{\delta}$, which is the set of vertices of the approximation graph associated to $(\mathcal{R}_n^{\delta})_{n\geq 0}$, see \cite[Definition 2.18]{Gero}.

\begin{definition}\label{def:sampling}
An $\mathcal{R}^{\delta}$\textit{-sampling function} is a function $c:\mathcal{R}^{\delta}\to X$ such that $c(R^{\delta})\in R^{\delta}$, for all $R^{\delta}\in \mathcal{R}^{\delta}$. The image $c(\mathcal{R}^{\delta})$ will be called an $\mathcal{R}^{\delta}$\textit{-sample} of the Smale space $(X,\varphi)$.
\end{definition}

\begin{remark}
The existence of an $\mathcal{R}^{\delta}$-sampling function follows from the Axiom of Choice. Also, note that every $\mathcal{R}^{\delta}$-sample is automatically a dense subset of $X$.
\end{remark}

The following definition plays a crucial role in constructing the isometry $V$. 

\begin{definition}\label{def:aperiodic_sampling}
An \textit{aperiodic} $\mathcal{R}^{\delta}$\textit{-sample} of the Smale space $(X,\varphi)$ is the image $c(\mathcal{R}^{\delta})$ of an injective $\mathcal{R}^{\delta}$-sampling function $c:\mathcal{R}^{\delta}\to X$ where $\varphi^j(c(\mathcal{R}^{\delta}))\cap c(\mathcal{R}^{\delta})=\varnothing$, for all $j\in \mathbb Z\setminus \{0\}$.
\end{definition}

\begin{remark}\label{rem:aperiodic_sampling}
The ordering in (\ref{eq:orderingcovers}) yields the bijection $$\{(n,k):n\geq 0, \, 1\leq k\leq \# \mathcal{R}_n^{\delta}\}\to \mathcal{R}^{\delta}$$ that maps $(n,k)$ to $R_{n,k}^{\delta}$. Therefore, given an injective $\mathcal{R}^{\delta}$-sampling function $c$, one has $c(R_{n,k}^{\delta})\neq c(R_{m,\ell}^{\delta})$ whenever $(n,k)\neq (m,\ell)$. In Proposition \ref{prop:aperiodic_set} we construct a particular injective $\mathcal{R}^{\delta}$-sampling function $c'$ whose image is aperiodic, and from that point on, every $c'(R_{n,k}^{\delta})$ will be denoted by $g_{n,k}$ and the image $c'(\mathcal{R}^{\delta})$ will be $\mathcal{G}=\{g_{n,k}:n\geq 0, \, 1\leq k\leq \# \mathcal{R}_n^{\delta}\}.$
\end{remark}

We now aim to construct an aperiodic $\mathcal{R}^{\delta}$-sample of $(X,\varphi)$ inside $X^h(P,Q)$. First, we require the next lemma which is an easy consequence of irreducibility.

\begin{lemma}\label{lem:thin_orbit1}
For every $x\in X^h(P,Q)$, the orbit $O(x)=\{\varphi^n(x):n\in \mathbb Z\}$ is not dense in $X$.
\end{lemma}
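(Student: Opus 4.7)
The plan is to show that the closure $\overline{O(x)}$ is a countable set, and then invoke uncountability of $X$ to conclude it is proper.

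First, I would observe that $x$ is not periodic: since $x \in X^h(P,Q) = X^s(P) \cap X^u(Q)$, if $x$ were periodic then $\varphi^n(x)$ would cycle through a finite set, and convergence to both $P$ and $Q$ in forward and backward time would force $x \in P \cap Q$, contradicting $P \cap Q = \varnothing$. Hence $O(x)$ is infinite.

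Next, by the definitions of the global stable and unstable sets recalled in Subsection~\ref{Sec:Smale}, since $x \in X^s(P)$ we have $d(\varphi^n(x), \varphi^n(p)) \to 0$ for some $p \in P$ as $n \to +\infty$, which together with the finiteness of the periodic orbit $P$ gives $d(\varphi^n(x), P) \to 0$. Symmetrically, $d(\varphi^n(x), Q) \to 0$ as $n \to -\infty$. Consequently, every accumulation point of $O(x)$ lies in $P \cup Q$, and therefore
\[
\overline{O(x)} \subseteq O(x) \cup P \cup Q,
\]
a countable set.

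To finish, I would argue that $X$ is uncountable. Because $(X,\varphi)$ is an irreducible Smale space, it is a non-empty perfect compact metric space: any isolated point would have to be periodic by irreducibility, and irreducibility together with compactness would then force $X$ to reduce to that finite orbit, contradicting infiniteness of $X$. A non-empty perfect compact metric space is uncountable (equivalently, by Baire category, a countable set in such a space is nowhere dense). Therefore $\overline{O(x)}$ is a proper closed subset of $X$, i.e.\ $O(x)$ is not dense.

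The only delicate point is verifying that $X$ has no isolated points; this is standard for irreducible Smale spaces (periodic points are dense and no open set can consist of a single periodic orbit since $X$ is infinite), so I would either prove it in one line or cite the standard reference from Putnam's book used throughout the preliminaries.
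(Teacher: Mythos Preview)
Your proof is correct and rests on the same two observations as the paper's: the orbit of a point in $X^h(P,Q)$ accumulates only on $P\cup Q$, and $X$ has no isolated points by irreducibility. The paper packages the conclusion slightly differently---it fixes a point $z\in X\setminus(P\cup Q)$, notes that the tail of the orbit stays within $\eta/2$ of $P\cup Q$, and observes that removing finitely many points from a dense set in a perfect space leaves it dense, yielding a contradiction---whereas you argue via countability of $\overline{O(x)}$ versus uncountability of $X$; the two endings are interchangeable, and your version is arguably cleaner (the aperiodicity remark in your first paragraph is true but not actually needed).
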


\begin{proof}
Let $z\in X\setminus (P\cup Q)$ and $\eta =d(z, P\cup Q)>0$, and assume to the contrary that there is some $x\in X^h(P,Q)$ for which $O(x)$ is dense in $X$. Since $x\in X^h(P,Q)$, there is some $N\in \mathbb N$ so that for every $n> N$ we have $d(\varphi^n(x),P)<\eta /2$ and $d(\varphi^{-n}(x),Q)<\eta /2$. Due to irreducibility, the space $X$ has no isolated points and hence the set $T(x)=O(x)\setminus \{\varphi^{-N}(x),\ldots , x, \ldots , \varphi^N(x)\}$ is still dense in $X$. However, $T(x)\cap B(z, \eta /3)=\varnothing$, a contradiction.
\end{proof}

Further, we require the next elementary lemma which holds for dynamical systems ($Z,\psi$), where $Z$ is an infinite topological space and $\psi:Z\to Z$ is a continuous map.

\begin{lemma} \label{lem:thin_orbit2}
For a dynamical system $(Z,\psi)$ the following are equivalent:
\begin{enumerate}[(1)]
\item $(Z,\psi)$ is irreducible;
\item every closed, $\psi$-invariant, proper subset of Z has empty interior.
\end{enumerate}
\end{lemma}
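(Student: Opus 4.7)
The proof will be a straightforward equivalence argument, handled by proving each implication separately.

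For $(1) \Rightarrow (2)$, I will argue by contrapositive: suppose $K \subset Z$ is closed, $\psi$-invariant, and proper, but $\Int(K) \neq \varnothing$. Then both $\Int(K)$ and the complement $Z \setminus K$ are non-empty open sets. Irreducibility of $(Z,\psi)$ supplies some iterate $n$ (non-negative, or in $\Z$ in the invertible case) with $\psi^n(\Int(K)) \cap (Z \setminus K) \neq \varnothing$. But $\psi$-invariance gives $\psi^n(\Int(K)) \subset \psi^n(K) \subset K$, yielding a direct contradiction.

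For $(2) \Rightarrow (1)$, again by contrapositive, assume $(Z,\psi)$ is \emph{not} irreducible. Then there exist non-empty open sets $U, V \subset Z$ such that $\psi^n(U) \cap V = \varnothing$ for every $n \geq 0$. The natural candidate for a bad closed invariant set is
\[
K \;=\; \overline{\bigcup_{n \geq 0} \psi^n(U)}.
\]
By construction $K$ is closed and contains the non-empty open set $U$, so $\Int(K) \supseteq U \neq \varnothing$. Continuity of $\psi$ gives $\psi(K) \subset \overline{\bigcup_{n\geq 1}\psi^n(U)} \subset K$, so $K$ is $\psi$-invariant. Finally, since $V$ is open and disjoint from $\bigcup_{n\geq 0}\psi^n(U)$, it is also disjoint from its closure $K$, so $K$ is proper. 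This violates (2).

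The argument is essentially formal and the only real care needed is to match the convention used in the paper for \emph{irreducible} (whether one quantifies over $n \geq 0$ or $n \in \Z$) and for \emph{invariant} (whether $\psi(K) \subset K$, $\psi^{-1}(K) = K$, or $\psi(K) = K$); either convention is handled by the same construction, replacing the one-sided orbit $\bigcup_{n \geq 0}\psi^n(U)$ by the two-sided one when appropriate. I don't foresee any genuine obstacle here — the content of the lemma is just the observation that the forward orbit of a non-empty open set generates the smallest closed invariant set containing it, and irreducibility is exactly the statement that no such proper closed invariant set can have interior.
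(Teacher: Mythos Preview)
The paper does not actually prove this lemma; it is stated as ``the next elementary lemma'' and left without proof. Your argument is correct and is the standard one: the contrapositive in each direction, with the forward orbit closure $K=\overline{\bigcup_{n\ge 0}\psi^n(U)}$ serving as the witnessing closed invariant set in the $(2)\Rightarrow(1)$ direction. Your remark about matching conventions is apt here, since the paper applies the lemma (in Proposition~\ref{prop:aperiodic_set}) to the closure of a \emph{two-sided} orbit under a homeomorphism, so the two-sided variant is what is actually used downstream.
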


For the next proposition we use the ordering (\ref{eq:orderingcovers}) and Remark \ref{rem:aperiodic_sampling}. We also note that the irreducibility of $(X,\varphi)$ plays an important role.

\begin{prop}\label{prop:aperiodic_set}
The Smale space $(X,\varphi)$ has an aperiodic $\mathcal{R}^{\delta}$-sample $\mathcal{G} \subseteq X^h(P,Q)$.  
\end{prop}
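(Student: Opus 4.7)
The plan is to enumerate $\mathcal{R}^{\delta}$ as a countable list and recursively pick one sample point from each rectangle, at each step avoiding every orbit already constructed. The Baire-type tension between the need to avoid countably many orbits and the need to stay inside the rectangle $R_{n,k}^{\delta}$ will be resolved by Lemmas \ref{lem:thin_orbit1} and \ref{lem:thin_orbit2}, which together force each orbit closure to be nowhere dense.

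More concretely, enumerate $\mathcal{R}^{\delta}=\{R^{(1)},R^{(2)},\ldots\}$ as a countable family (the union of the finite collections $\mathcal{R}_n^{\delta}$). I proceed by induction on $i\geq 1$. Suppose $g^{(1)},\ldots,g^{(i-1)}\in X^h(P,Q)$ have been chosen with $g^{(j)}\in R^{(j)}$ and $g^{(j)}\notin O(g^{(k)})$ for all $k<j<i$. For each $j<i$, Lemma \ref{lem:thin_orbit1} gives that $O(g^{(j)})$ is not dense in $X$, so $\overline{O(g^{(j)})}$ is a closed, $\varphi$-invariant, proper subset of $X$. Irreducibility together with Lemma \ref{lem:thin_orbit2} then yields that $\overline{O(g^{(j)})}$ has empty interior. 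Since $X$ is a compact metric space, hence a Baire space, the finite union
\[
C_i \;:=\; \bigcup_{j<i}\overline{O(g^{(j)})}
\]
is closed with empty interior, and therefore $R^{(i)}\setminus C_i$ is a non-empty open set. By density of $X^h(P,Q)$ in $X$, I can pick
\[
g^{(i)}\in R^{(i)}\cap X^h(P,Q)\setminus C_i,
\]
which extends the induction.

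Now define $c'\colon\mathcal{R}^{\delta}\to X$ by $c'(R^{(i)})=g^{(i)}$ and let $\mathcal{G}=c'(\mathcal{R}^{\delta})$. By construction $\mathcal{G}\subseteq X^h(P,Q)$, so no element of $\mathcal{G}$ is periodic (as noted in the discussion following Notation \ref{def:SmaleCalg}, since $P\cap Q=\varnothing$). Moreover, for $i\neq j$ the defining condition $g^{(i)}\notin O(g^{(j)})$ (taking $i>j$) and symmetry of orbit equivalence force $O(g^{(i)})\cap O(g^{(j)})=\varnothing$: if $\varphi^{p}(g^{(i)})=\varphi^{q}(g^{(j)})$ for some $p,q\in\mathbb{Z}$, then $g^{(i)}=\varphi^{q-p}(g^{(j)})\in O(g^{(j)})$, contradicting the choice at step $i$. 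Combining aperiodicity with the pairwise disjointness of orbits gives $\varphi^{j}(\mathcal{G})\cap\mathcal{G}=\varnothing$ for every $j\in\mathbb{Z}\setminus\{0\}$, and in particular $c'$ is injective. Re-indexing through the bijection of Remark \ref{rem:aperiodic_sampling} gives the family $\mathcal{G}=\{g_{n,k}:n\geq 0,\,1\leq k\leq \#\mathcal{R}_{n}^{\delta}\}$ with the required properties.

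The main obstacle is the step that relies on Baire category: the orbits themselves are countable and might in principle fail to be nowhere dense, so avoiding them one point at a time inside the countable dense set $R^{(i)}\cap X^h(P,Q)$ is not obviously possible. The fix is to avoid orbit \emph{closures} rather than orbits, and this is exactly why Lemmas \ref{lem:thin_orbit1} and \ref{lem:thin_orbit2} are stated in the form they are — once the closures are known to be nowhere dense, the rest of the argument is a routine inductive diagonalisation inside a Baire space against finitely many nowhere dense obstructions at each stage.
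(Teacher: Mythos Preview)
Your proposal is correct and follows essentially the same route as the paper: enumerate $\mathcal{R}^{\delta}$, and at each stage choose a point of $X^h(P,Q)$ inside the current rectangle while avoiding the (finitely many) orbit closures already produced, using Lemmas~\ref{lem:thin_orbit1} and~\ref{lem:thin_orbit2} to guarantee these closures are nowhere dense. The only cosmetic difference is that you invoke the Baire property, whereas a finite union of closed nowhere-dense sets is automatically nowhere dense, so Baire category is not actually needed at any step.
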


\begin{proof}
We construct a sampling function $c:\mathcal{R}^{\delta}\to X$ inductively. 

Let $B=\{(n,k):n\geq 0, \, 1\leq k\leq \# \mathcal{R}_n^{\delta}\}$, which is obviously in bijection with $\mathcal{R}^{\delta}$, and endow $B$ with the lexicographic order $(n,k)\leq (m,\ell)$, if $(n<m)$ or $(n=m$ and $k\leq \ell)$. It will be convenient to write $B=\{e_i:i\in \mathbb N\}$, where $e_1=(0,1)$ and $e_i<e_{i+1}$, for all $n\in \mathbb N$. We will construct a set $\mathcal{G}=\{g_{e_i}:i\in \mathbb N\}$ in  $X^h(P,Q)$ as follows:
\begin{enumerate}[(i)]
\item Choose $g_{e_1}\in X^h(P,Q)$;
\item Having chosen $g_{e_1},\ldots, g_{e_n}$, choose $$g_{e_{n+1}}\in R_{e_{n+1}}^{\delta}\cap (X^h(P,Q)\setminus \bigcup_{i=1}^{n}\cl (O(g_{e_i}))).$$
\end{enumerate}
To see that $\mathcal{G}$ is well defined, notice from Lemmas \ref{lem:thin_orbit1} and \ref{lem:thin_orbit2}, for every $g\in X^h(P,Q)$, the set $\cl(O(g))$ is a closed, $\varphi$-invariant, proper subset of $X$ with empty interior. Therefore, every $X\setminus \cl(O(g))$ is open and dense in $X$. Moreover, since the intersection of an open dense set with a dense set is a dense set we can always inductively choose new points to belong to $\mathcal{G}$.

The sampling function $c:\mathcal{R}^{\delta}\to X$ is given by $c(R_{e_{i}}^{\delta})=g_{e_i}\in R_{e_{i}}^{\delta}$, where $i\in \mathbb N$, and the $\mathcal{R}^{\delta}$-sample is $c(\mathcal{R}^{\delta})=\mathcal{G}$. To see that it is injective, let $e_m\neq e_n$ and assume that $e_m < e_n$. Then, $$g_{e_{n}}\in X^h(P,Q)\setminus \bigcup_{i=1}^{n-1}\cl (O(g_{e_i})),$$ while $g_{e_m}\in \bigcup_{i=1}^{n-1}\cl (O(g_{e_i})),$ and hence $g_{e_n}\neq g_{e_m}$.

Moreover, $\varphi^j(\mathcal{G})\cap \mathcal{G}=\varnothing$, for all $j\in \mathbb Z\setminus \{0\}$. For this, assume to the contrary that there are $g_{e_m},g_{e_n}\in \mathcal{G}$ and $j\in \mathbb Z\setminus \{0\}$ such $\varphi^j(g_{e_n})=g_{e_m}$. If $n=m$ we get a contradiction because $g_{e_n}$ is not periodic. Assume that $m<n$. Since $X^h(P,Q)$ and every orbit is $\varphi$-invariant, we have that $$\varphi^j(g_{e_{n}})\in X^h(P,Q)\setminus \bigcup_{i=1}^{n-1}\cl (O(g_{e_i})),$$ while $g_{e_m}\in \bigcup_{i=1}^{n-1}\cl (O(g_{e_i})),$ and hence $\varphi^j(g_{e_n})\neq g_{e_m}$, leading to a contradiction. Similarly if $n<m$. Finally, we can drop the notation of the $e_i$'s and get the desired aperiodic $\mathcal{R}^{\delta}$-sample $$\mathcal{G}=\{g_{n,k}:n\geq 0, \, 1\leq k\leq \# \mathcal{R}_n^{\delta}\}$$ of the Smale space $(X,\varphi)$.
\end{proof}

\section{Lifting the fundamental class: the averaging isometries}\label{sec:averaging_isom} 

In this section we construct an isometry $V$ satisfying equation (\ref{eq:KK_1_lift_equation}). Then, in Section \ref{sec:DilateKPW} we prove that $V$, together with the symmetric representation $\rho$ in (\ref{eq:symmetric_rep}), yields a $\theta$-summable Fredholm module representative of the fundamental class $\Delta$. Following the discussion in Subsection \ref{sec:intuition_isometry}, the isometry $V$ should have the form $\bigoplus_{n\in \mathbb Z} V_n$ for isometries $V_n:\mathscr{H}\to \mathscr{H}\otimes \mathscr{H}$, where $\mathscr{H}=\ell^2(X^h(P,Q))$. Each $V_n$ should be an average of isometries (of the form (\ref{eq:model_isom})) with mutually orthogonal ranges, so that the quasi-invariance properties (\ref{eq:intuition_isom_7}) and (\ref{eq:intuition_isom_8}) are met, and also that property (\ref{eq:intuition_isom_9}) holds. 

Let $(\mathcal{R}_n^{\delta})_{n\geq 0}$ be a refining sequence of $\delta$-enlarged Markov partitions of the irreducible Smale space $(X,\varphi)$, obtained by Theorem \ref{thm:theoremgraphSmalespaces}. Recall the set $\mathcal{R}^{\delta}=\coprod_{n\geq 0} \mathcal{R}_n^{\delta}$. Similarly, as in Section \ref{sec:pou_Markov}, choose a random ordering in every $\mathcal{R}_n^{\delta}$ and write 
\begin{equation}\label{eq:orderingcovers2}
\mathcal{R}_n^{\delta}=\{R_{n,k}^{\delta}:1\leq k\leq \# \mathcal{R}_n^{\delta}\}.
\end{equation}
For our purposes here, it is important to recall that for $n\in \mathbb N$ and $1\leq k\leq \# \mathcal{R}_n^{\delta}$, if $x,y\in R_{n,k}^{\delta}$ then 
\begin{equation}\label{eq:averaging1}
d(\varphi^r(x),\varphi^r(y))< \varepsilon_X',
\end{equation} 
for all $|r|\leq n-1$.

Our goal is to show that the refining sequence $(\mathcal{R}_n^{\delta})_{n\geq 0}$ produces a family 
\begin{equation}\label{eq:averaging2}
\mathcal{T}=\{\iota_{n,r}: n\in \mathbb N,\, |r|\leq n-1\}
\end{equation}
of \textit{basic isometries} $\iota_{n,r}:\mathscr{H}\to \mathscr{H}\otimes \mathscr{H}$ with mutually orthogonal ranges, which we can average in a certain way. From the results of Section \ref{sec:pou_Markov} (specifically see (\ref{eq:pou2}) and Proposition \ref{prop:aperiodic_set}) we have:
\begin{enumerate}[(i)]
\item for $n\geq 0$, there is a Lipschitz partition of unity $\mathcal{F}_n=\{F_{n,k}:1\leq k \leq \# \mathcal{R}_n^{\delta}\}$ on $X$ such that $F_{n,k}(x)>0$ if and only if $x\in R_{n,k}^{\delta}.$ As usual, we require the square roots of these functions, so we define the family 
\[
\mathcal{F}_n^{1/2}=\{f_{n,k}:1\leq k \leq \# \mathcal{R}_n^{\delta}\}, \quad \text{where $ f_{n,k}=F_{n,k}^{1/2};$}
\]
\item there is an aperiodic $\mathcal{R}^{\delta}$-sample 
$$
\mathcal{G}=\{g_{n,k}\in R_{n,k}^{\delta}: n\geq 0,\, 1\leq k \leq \# \mathcal{R}_n^{\delta}\}\subset X^h(P,Q),
$$
meaning that $g_{n,k}\neq g_{m,\ell}$, if $(n,k)\neq (m,\ell)$, and $\varphi^r(\mathcal{G})\cap \mathcal{G}=\varnothing$, for all $r\in \mathbb Z \setminus \{0\}.$
\end{enumerate}
 
Given (i) and (ii) we now construct the family $\mathcal{T}$. For every $n\in \mathbb N$, define the operator $\iota_{n,0}:\mathscr{H}\to \mathscr{H}\otimes \mathscr{H}$ on basis vectors by
\begin{equation}\label{eq:averaging3}
\iota_{n,0}(\delta_y)=\sum_{k=1}^{\#\mathcal{R}_n^{\delta}} f_{n,k}(y)\delta_{[y,g_{n,k}]}\otimes \delta_{[g_{n,k},y]}.
\end{equation}
It is well-defined given our \textit{standard convention} that the bracket map returns the empty set for points $x,y$ with $d(x,y)> \varepsilon_X$ and that, the Dirac delta function on the empty set returns zero. Recall that each $R_{n,k}^{\delta}$ is a rectangle with $\diam (R_{n,k}^{\delta})\leq \varepsilon_X'$ and therefore we can use the notation $X^u(g_{n,k},R_{n,k}^{\delta})=X^u(g_{n,k},2\varepsilon_X')\cap R_{n,k}^{\delta}$. Similarly for the stable case. 

The adjoint is given on basis vectors by 
\begin{equation}\label{eq:averaging4}
\iota_{n,0}^*(\delta_x \otimes \delta_z)=f_{n,k}([x,z])\delta_{[x,z]},
\end{equation}
if $x\in X^u(g_{n,k},R_{n,k}^{\delta}),\, z\in X^s(g_{n,k},R_{n,k}^{\delta})$ (for a unique $k$), and is zero otherwise. Indeed, we have that
\[
\langle \iota_{n,0}(\delta_y),\delta_x\otimes \delta_z \rangle=\sum_{k} f_{n,k}(y),
\]
where the sum is taken over all $k$ such that $f_{n,k}(y)\neq 0$ and $x=[y,g_{n,k}],\, z=[g_{n,k},y]$. Let $k_1$ be one of these $k$s and since $f_{n,k_1}(y)\neq 0$ it holds $y\in R_{n,k_1}^{\delta}$. Consequently, we have $x\in X^u(g_{n,k_1},R_{n,k_1}^{\delta}),\, z\in X^s(g_{n,k_1},R_{n,k_1}^{\delta}).$ Now, if $k_2$ is one of these $k$s then, $$g_{n,k_2}\in X^s(g_{n,k_1},\varepsilon_X)\cap X^u(g_{n,k_1},\varepsilon_X)=\{g_{n,k_1}\},$$ and hence $k_2=k_1$. Finally, note that $[x,z]=y$.

\begin{lemma}\label{lem:isometries}
For every $n\in \mathbb N$ the operator $\iota_{n,0}$ is an isometry.
\end{lemma}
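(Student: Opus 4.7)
The plan is to verify that $\iota_{n,0}$ preserves inner products on the orthonormal basis $\{\delta_y : y\in X^h(P,Q)\}$, i.e. that $\langle \iota_{n,0}(\delta_y),\iota_{n,0}(\delta_{y'})\rangle = \delta_{y,y'}$ for all basis pairs. Expanding,
\begin{equation*}
\langle \iota_{n,0}(\delta_y),\iota_{n,0}(\delta_{y'})\rangle = \sum_{k,k'} f_{n,k}(y)\,f_{n,k'}(y')\,\langle \delta_{[y,g_{n,k}]},\delta_{[y',g_{n,k'}]}\rangle \,\langle \delta_{[g_{n,k},y]},\delta_{[g_{n,k'},y']}\rangle,
\end{equation*}
so everything reduces to analysing when the bracket coordinates of two contributing terms agree, with all brackets interpreted under the standard convention.

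First I would handle the diagonal $y=y'$ and show cross terms with $k\neq k'$ vanish. If $f_{n,k}(y)f_{n,k'}(y)\neq 0$ then $y\in R_{n,k}^\delta\cap R_{n,k'}^\delta$, and the bracket terms are defined. Suppose $[y,g_{n,k}]=[y,g_{n,k'}]$ and $[g_{n,k},y]=[g_{n,k'},y]$. Then $g_{n,k'}$ lies in both the local stable set and the local unstable set of $g_{n,k}$ (since both $g_{n,k}$ and $g_{n,k'}$ lie in the small rectangle containing $y$), so $g_{n,k'}\in X^s(g_{n,k},\varepsilon_X)\cap X^u(g_{n,k},\varepsilon_X)=\{g_{n,k}\}$, giving $g_{n,k'}=g_{n,k}$, and by injectivity of the $\mathcal{R}^\delta$-sampling function (Proposition \ref{prop:aperiodic_set}) we get $k=k'$. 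So the only surviving terms are $k=k'$, yielding
\begin{equation*}
\|\iota_{n,0}(\delta_y)\|^2 = \sum_{k} f_{n,k}(y)^2 = \sum_{k} F_{n,k}(y) = 1,
\end{equation*}
since $\mathcal{F}_n$ is a partition of unity on $X$.

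Next I would treat $y\neq y'$ and show every term vanishes. Suppose for contradiction some term is nonzero, so $f_{n,k}(y)f_{n,k'}(y')\neq 0$ with $x := [y,g_{n,k}]=[y',g_{n,k'}]$ and $z := [g_{n,k},y]=[g_{n,k'},y']$. Applying the bracket axioms (B2) and (B3) and (B1),
\begin{equation*}
[x,z] = \bigl[[y,g_{n,k}],[g_{n,k},y]\bigr] = [y,[g_{n,k},y]] = [y,y] = y,
\end{equation*}
and symmetrically $[x,z]=y'$. Thus $y=y'$, a contradiction, so the inner product is $0$.

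The main obstacle is the cross-term analysis: one must rule out coincidences of bracket pairs across different $k$ or different base points, and this is precisely where the combinatorics of the $\delta$-enlarged Markov partition interacts with the bracket axioms. The key inputs are the local uniqueness of the intersection of a local stable and a local unstable set (giving the injective recovery $[x,z]=y$) and the injectivity of the sampling function, both of which are essential ingredients already assembled in Sections \ref{sec:Markovpartitions} and \ref{sec:pou_Markov}. Once these are invoked the computation is clean and no further analytic input is required.
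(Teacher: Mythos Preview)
Your proof is correct and essentially matches the paper's approach. The paper packages the cross-term elimination and the recovery $[x,z]=y$ into the computation of the adjoint formula \eqref{eq:averaging4} immediately before the lemma, so that the lemma itself reduces to the one-line computation $\iota_{n,0}^*\iota_{n,0}(\delta_y)=\sum_k f_{n,k}(y)^2\delta_y=\delta_y$; you instead verify $\langle \iota_{n,0}(\delta_y),\iota_{n,0}(\delta_{y'})\rangle=\delta_{y,y'}$ directly, reproducing the same two ingredients (uniqueness of $k$ via $X^s(g_{n,k},\varepsilon_X)\cap X^u(g_{n,k},\varepsilon_X)=\{g_{n,k}\}$ together with injectivity of the sampling, and the bracket-axiom identity $[[y,g_{n,k}],[g_{n,k},y]]=y$) inside the proof.
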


\begin{proof}
For $\delta_y\in \mathscr{H}$ we have 
\begin{align*}
\iota_{n,0}^* \iota_{n,0}(\delta_y)&=\sum_{k=1}^{\#\mathcal{R}_n^{\delta}} f_{n,k}(y)\iota_{n,0}^*(\delta_{[y,g_{n,k}]}\otimes \delta_{[g_{n,k},y]})= \sum_{k=1}^{\#\mathcal{R}_n^{\delta}} f_{n,k}(y)^2\delta_y=\delta_y. \qedhere
\end{align*}
\end{proof}

Let $n\in \mathbb N$ and for every $r\in \mathbb Z$ we define the isometry $\iota_{n,r}=(u\otimes u)^r \iota_{n,0} u^{-r},$ where $u$ is the unitary on $\mathscr{H}$ given by $u(\delta_x)=\delta_{\varphi (x)}.$ The sequence $(\iota_{n,r})_{r\in \mathbb Z}$ can be considered as the $\mathbb Z$-orbit of $\iota_{n,0}$, and its interesting part lies in the central terms for $|r|\leq n-1$. Indeed, if $|r|\leq n-1$ then from (\ref{eq:averaging1}) we have that 
\begin{equation}\label{eq:averaging5}
\iota_{n,r}(\delta_y)=\sum_{k=1}^{\#\mathcal{R}_n^{\delta}} f_{n,k}(\varphi^{-r}(y))\delta_{[y,\varphi^r(g_{n,k})]}\otimes \delta_{[\varphi^r(g_{n,k}),y]}.
\end{equation}
Then, it is straightforward to see that 
\begin{equation}\label{eq:averaging6}
\iota_{n,r}^*(\delta_x \otimes \delta_z)=f_{n,k}(\varphi^{-r}[x,z])\delta_{[x,z]},
\end{equation}
if $\varphi^{-r}(x)\in X^u(g_{n,k},R_{n,k}^{\delta}),\, \varphi^{-r}(z)\in X^s(g_{n,k},R_{n,k}^{\delta})$ (for a unique $k$), and is zero otherwise. Since $\mathcal{G}$ is an aperiodic $\mathcal{R}^{\delta}$-sample, we obtain the following orthogonality result.

\begin{lemma}\label{lem:orthogonality}
For every $m,n\in \mathbb N$ and $r,s\in \mathbb Z$ with $|r|\leq n-1,\, |s|\leq m-1$ so that $(n,r)\neq (m,s)$, we have that $\iota_{m,s}^*\iota_{n,r}=0$.
\end{lemma}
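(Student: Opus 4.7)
The plan is to argue by contradiction: assuming $\iota_{m,s}^*\iota_{n,r}\neq 0$, I will force $(n,r)=(m,s)$ using the bracket axioms and the aperiodicity of $\mathcal{G}$.

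First I would expand $\iota_{m,s}^*\iota_{n,r}(\delta_y)$ via \eqref{eq:averaging5} and \eqref{eq:averaging6}. A non-zero contribution forces the existence of an index $k$ with $\varphi^{-r}(y)\in R_{n,k}^{\delta}$ and a (necessarily unique) $\ell$ such that $a_k:=\varphi^{-s}(x_k)\in X^u(g_{m,\ell},R_{m,\ell}^{\delta})$ and $b_k:=\varphi^{-s}(z_k)\in X^s(g_{m,\ell},R_{m,\ell}^{\delta})$, where $x_k=[y,\varphi^r(g_{n,k})]$ and $z_k=[\varphi^r(g_{n,k}),y]$.

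Next I would translate these local conditions into alternative bracket expressions. The point $p:=\varphi^{-s}(y)$ satisfies $p=[a_k,b_k]\in R_{m,\ell}^{\delta}$, and its unstable coordinate in the local product chart at $g_{m,\ell}$ is $a_k=[p,g_{m,\ell}]$. Using $|s|\leq m-1$, Theorem \ref{thm:theoremgraphSmalespaces}(2) ensures $d(y,\varphi^s(g_{m,\ell}))<\varepsilon_X'$, so the bracket $[y,\varphi^s(g_{m,\ell})]$ is defined. Iterating axiom B4 then yields $x_k=\varphi^s(a_k)=[y,\varphi^s(g_{m,\ell})]$, and an analogous computation gives $z_k=[\varphi^s(g_{m,\ell}),y]$.

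Equating the two expressions for $x_k$ and for $z_k$, I get $[y,\varphi^r(g_{n,k})]=[y,\varphi^s(g_{m,\ell})]$ and $[\varphi^r(g_{n,k}),y]=[\varphi^s(g_{m,\ell}),y]$. Distance bounds from Theorem \ref{thm:theoremgraphSmalespaces}(2) using both $|r|\leq n-1$ and $|s|\leq m-1$ place $\varphi^r(g_{n,k})$ and $\varphi^s(g_{m,\ell})$ within $\varepsilon_X$ and make every bracket below well-defined. Applying B2 to $[\varphi^r(g_{n,k}),[\varphi^s(g_{m,\ell}),x_k]]$ and using that $x_k$ lies in the local unstable sets of both $\varphi^r(g_{n,k})$ and $\varphi^s(g_{m,\ell})$ produces $[\varphi^r(g_{n,k}),\varphi^s(g_{m,\ell})]=\varphi^r(g_{n,k})$; applying B3 to $[[\varphi^s(g_{m,\ell}),z_k],\varphi^r(g_{n,k})]$ with $z_k$ locally stable-equivalent to both base points produces $[\varphi^s(g_{m,\ell}),\varphi^r(g_{n,k})]=\varphi^r(g_{n,k})$. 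Hence $\varphi^s(g_{m,\ell})$ lies in both the local stable and local unstable set of $\varphi^r(g_{n,k})$; this intersection is $\{\varphi^r(g_{n,k})\}$, so $\varphi^{r-s}(g_{n,k})=g_{m,\ell}$.

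Aperiodicity of $\mathcal{G}$ then closes the argument: $\varphi^{r-s}(\mathcal{G})\cap \mathcal{G}=\varnothing$ for $r\neq s$ forces $r=s$, and injectivity of the sampling function forces $(n,k)=(m,\ell)$, so $(n,r)=(m,s)$, the desired contradiction. The main obstacle is the bookkeeping of which brackets are defined when applying the axioms; this is precisely controlled by the refining sequence's key property in Theorem \ref{thm:theoremgraphSmalespaces}(2), which is available under the hypothesis $|r|\leq n-1$ and $|s|\leq m-1$.
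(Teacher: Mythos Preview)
Your argument is correct and follows essentially the same route as the paper: assume a non-zero term in $\iota_{m,s}^*\iota_{n,r}(\delta_y)$, extract indices $k,\ell$, and use the aperiodicity of $\mathcal{G}$ once you have established $\varphi^r(g_{n,k})=\varphi^s(g_{m,\ell})$. The only difference is in how this last equality is reached. You take a slight detour, first rewriting $x_k=[y,\varphi^s(g_{m,\ell})]$ and $z_k=[\varphi^s(g_{m,\ell}),y]$ via the local product chart at $g_{m,\ell}$, and then manipulating the bracket axioms B2 and B3 explicitly. The paper instead observes directly that $x_k\in X^u(\varphi^r(g_{n,k}),\varepsilon_X/2)\cap X^u(\varphi^s(g_{m,\ell}),\varepsilon_X')$ and $z_k\in X^s(\varphi^r(g_{n,k}),\varepsilon_X/2)\cap X^s(\varphi^s(g_{m,\ell}),\varepsilon_X')$, and concludes by transitivity of local stable/unstable membership that $\varphi^r(g_{n,k})\in X^s(\varphi^s(g_{m,\ell}),\varepsilon_X)\cap X^u(\varphi^s(g_{m,\ell}),\varepsilon_X)=\{\varphi^s(g_{m,\ell})\}$. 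This is a bit more economical but amounts to the same geometric fact; your explicit axiom-chasing is a perfectly valid way to justify that step.
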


\begin{proof}
Let $m,n,r,s$ as in the statement and assume to the contrary that $\iota_{m,s}^*\iota_{n,r}\neq 0$. Then there is a basis vector $\delta_y\in \mathscr{H}$ such that $\iota_{m,s}^*\iota_{n,r}(\delta_y)\neq 0$. We have that 
\[
\iota_{m,s}^*\iota_{n,r}(\delta_y)=\sum_{k=1}^{\#\mathcal{R}_n^{\delta}} f_{n,k}(\varphi^{-r}(y))\iota_{m,s}^*(\delta_{[y,\varphi^r(g_{n,k})]}\otimes \delta_{[\varphi^r(g_{n,k}),y]}),
\]
and hence there is $k$ such that $f_{n,k}(\varphi^{-r}(y))\iota_{m,s}^*(\delta_{[y,\varphi^r(g_{n,k})]}\otimes \delta_{[\varphi^r(g_{n,k}),y]})\neq 0.$ Now, since $f_{n,k}(\varphi^{-r}(y))\neq 0$ and $|r|\leq n-1$ we obtain $\varphi^{-r}(y)\in R_{n,k}^{\delta}$ and so $d(y,\varphi^r(g_{n,k}))\leq \varepsilon_X'$. Therefore, 
\begin{equation}\label{eq:orthogonality1}
[y,\varphi^r(g_{n,k})]\in X^u(\varphi^r(g_{n,k}),\varepsilon_X/2),\quad  [\varphi^r(g_{n,k}),y]\in X^s(\varphi^r(g_{n,k}),\varepsilon_X/2).
\end{equation}
Moreover, since $\iota_{m,s}^*(\delta_{[y,\varphi^r(g_{n,k})]}\otimes \delta_{[\varphi^r(g_{n,k}),y]})\neq 0$, there exists some $1\leq l\leq \mathcal{R}_m^{\delta}$ so that 
$$\varphi^{-s}[y,\varphi^r(g_{n,k})]\in X^u(g_{m,l}, R_{m,l}^{\delta}),\quad \varphi^{-s}[\varphi^r(g_{n,k}),y]\in X^s(g_{m,l}, R_{m,l}^{\delta}).$$
The fact that $|s|\leq m-1$ implies that 
$$[y,\varphi^r(g_{n,k})]\in X^u(\varphi^s(g_{m,l}),\varepsilon_X'),\quad [\varphi^r(g_{n,k}),y]\in X^s(\varphi^s(g_{m,l}),\varepsilon_X'),$$
and using (\ref{eq:orthogonality1}) we get that 
\begin{equation}\label{eq:orthogonality2}
\varphi^r(g_{n,k})\in X^s(\varphi^s(g_{m,l}),\varepsilon_X)\cap X^u(\varphi^s(g_{m,l}),\varepsilon_X).
\end{equation}
Consequently, $\varphi^r(g_{n,k})=\varphi^s(g_{m,l}).$ If $r=s$ then $g_{n,k}=g_{m,l}$. Since the $\mathcal{R}^{\delta}$-sample is aperiodic we get that $n=m$, obtaining a contradiction. If $r\neq s$, then $\varphi^{r-s}(g_{n,k})=g_{m,l}$ and hence $\varphi^{r-s}(\mathcal{G})\cap \mathcal{G}\neq \varnothing$, leading again to a contradiction since $\mathcal{G}$ is an aperiodic $\mathcal{R}^{\delta}$-sample. Thus $\iota_{m,s}^*\iota_{n,r}=0$.
\end{proof}

\begin{remark}\label{rem:holdercoef}
We follow the notation of (\ref{eq:averaging5}). Restricting to isometries $\iota_{n,r}$ with $|r|\leq n-1$ allows us to control the coefficient functions $f_{n,k}\circ \varphi^{-r}$. More precisely, in Section \ref{sec:DilateKPW} we will consider sequences $(x_n)_{n\in \mathbb N},\, (y_n)_{n\in \mathbb N}$ in $X$ satisfying $d(x_n,y_n)\leq \lambda^{-n}$ eventually. For these sequences we want to have that 
\begin{equation}\label{eq:holdercoef}
\lim_{n\to \infty}|f_{n,k}\circ \varphi^{-r}(x_n)-f_{n,k}\circ \varphi^{-r}(y_n)|=0,
\end{equation}
for every $k,r$. Even better we'd have that the limit is converging to zero exponentially. From Proposition \ref{prop:Lip_pou_con} we have that for $c=(2(\# \mathcal{R}_1^{\delta})^2+1)^{1/2}$, each $f_{n,k}$ is $1/2$-H{\"o}lder with coefficient $$\text{H{\"o}l}(f_{n,k})\leq \frac{c}{\Leb(\mathcal{R}_n^{\delta})^{1/2}}.$$ If the metric $d$ is self-similar then $\varphi,\varphi^{-1}$ are $\lambda$-Lipschitz, and from Theorem \ref{thm:theoremgraphSmalespaces} there is a constant $c'>0$ such that for every $n,k$ we have
\[
\text{H{\"o}l}(f_{n,k})\leq c' \lambda^{n/2}.
\]
Therefore, each $f_{n,k}\circ \varphi^{-r}$ is $1/2$-H{\"o}lder with $\text{H{\"o}l}(f_{n,k}\circ \varphi^{-r})\leq c' \lambda^{(n+|r|)/2}\leq c' \lambda^n.$ Hence, the limit may not be zero. This can be fixed if we linearly slow down the coefficient functions; that is, for every $n\in \mathbb N,\, k\in \{1,\ldots ,\#\mathcal{R}_{\lceil n/4 \rceil}^{\delta}\}$, $|r|\leq \lceil n/4 \rceil-1$ we have
\begin{equation}\label{eq:holdercoef}
|f_{\lceil n/4 \rceil ,k}\circ \varphi^{-r}(x_n)-f_{\lceil n/4 \rceil,k}\circ \varphi^{-r}(y_n)|\leq c' \lambda^{1-n/4}.
\end{equation}
This idea works also for a non-self-similar metric $d$, for which the map $\varphi$ is bi-Lipschitz. However, the slow down may then not be linear, and this will effect the summability properties of the Fredholm module representative of the fundamental class.
\end{remark}

In order to visualise the basic isometries we consider their range projections. Let $n\in \mathbb N$, and for $r\in \mathbb Z$ consider the projections $p_{n,r}=\iota_{n,r}\iota_{n,r}^*=(u\otimes u)^r p_{n,0}(u\otimes u)^{-r}$. For $|r|\leq n-1$ they are given on basis vectors by
\begin{equation}\label{eq:averaging7}
p_{n,r}(\delta_x\otimes \delta_z)=f_{n,l}(\varphi^{-r}[x,z])\sum_{k=1}^{\# \mathcal{R}_n^{\delta}}f_{n,k}(\varphi^{-r}[x,z])\delta_{[x, \varphi^r(g_{n,k})]}\otimes \delta_{[\varphi^r(g_{n,k}),z]},
\end{equation}
if $\varphi^{-r}(x)\in X^u(g_{n,l},R_{n,l}^{\delta}),\, \varphi^{-r}(z)\in X^s(g_{n,l},R_{n,l}^{\delta})$ (for a unique $l$), and is zero otherwise. Also, from Lemma \ref{lem:orthogonality}, for $m,n\in \mathbb N$ and $r,s\in \mathbb Z$ with $|r|\leq n-1,\, |s|\leq m-1$ and $(n,r)\neq (m,s)$, we have that 
\begin{equation}\label{eq:averaging8}
p_{m,s}p_{n,r}=0.
\end{equation}

We now construct the isometry $V=\bigoplus_{n\in \mathbb Z} V_n$, from isometries $V_n:\mathscr{H}\to \mathscr{H}\otimes \mathscr{H}$ by averaging over particular basic isometries. It will aid intuition to envisage the family $\mathcal{T}=\{\iota_{n,r}: n\in \mathbb N,\, |r|\leq n-1\}$ as a triangle
\vspace{-0.6cm}
\begin{center}
$$\begin{array}{cccccccccc}
& & & & \iota_{1,0} \\
& & &   \iota_{2,\text{-}1} & \iota_{2,0} & \iota_{2,1}\\
& & \iota_{3,\text{-}2} & \iota_{3,\text{-}1} & \iota_{3,0} & \iota_{3,1} & \iota_{3,2}\\
& \iota_{4,\text{-}3} & \iota_{4,\text{-}2} & \iota_{4,\text{-}1} &\iota_{4,0} & \iota_{4,1} & \iota_{4,2} & \iota_{4,3}\\
\Ddots & \vdots & \vdots & \vdots & \vdots & \vdots &\vdots & \vdots &\ddots 
\end{array}$$
\end{center}
The gist is to find an appropriate way to average in $\mathcal{T}$ and construct each $V_n$. Recall that we want to achieve the following quasi-invariance conditions:
\begin{equation}\label{eq:averaging9}
\lim_{n\to \pm \infty} \|V_{n+1}-V_n\|=0,
\end{equation}  
and
\begin{equation}\label{eq:averaging10}
\lim_{n\to \pm \infty} \|(u\otimes u)V_{n}-V_nu\|=0.
\end{equation}
Every $V_n$ will be the average of finitely many isometries. Also, the bigger the $n$, the more isometries that will need to be averaged. The first quasi-invariance condition indicates that the isometries whose average is $V_{n}$ should coincide more and more with the isometries that produce $V_{n+1}.$ The second quasi-invariance condition suggests that the chosen isometries for every $V_n$, should be further and further away (horizontally) from the boundaries of the triangle; the sets $\{\iota_{n,-n+1}:n\in \mathbb N\}$ and $\{\iota_{n,n-1}:n\in \mathbb N\}$. In this way, every $V_n$, eventually, will not get shifted outside of the triangle by powers of $u$ and $u\otimes u$. Of course, to achieve (\ref{eq:averaging10}) for any fixed power of $u$ and $u\otimes u$, it suffices to choose isometries from $\mathcal{T}$ that are simply not in the two boundary isometries. However, we are interested in achieving condition (\ref{eq:intuition_isom_6}), which requires that we asymptotically stay in the triangle completely. This also forces to construct each $V_n$ as a moving average directed towards the bottom of the triangle. 

More precisely, let $\iota_{0,0}:\mathscr{H}\to \mathscr{H}\otimes \mathscr{H}$ be the diagonal embedding, and for every $m\geq 0$ define
\begin{equation}\label{eq:averaging11}
\theta_m=\sum_{r=-m}^{m} \iota_{2m,r}.
\end{equation}
Due to Lemma \ref{lem:orthogonality}, for every $m,m'\in \mathbb N$ with $m\neq m'$, it holds that $\theta_m^*\theta_m=(2m+1)I$ and $\theta_m^*\theta_{m'}=0$. Therefore, for every $n\geq 0$, the operator
\begin{equation}\label{eq:averaging12}
W_n=c_n^{-1}\sum_{m=n}^{2n}\theta_m,
\end{equation}
where $c_n=((n+1)(3n+1))^{1/2}$, is an isometry in $\mathcal{B}(\mathscr{H},\mathscr{H}\otimes \mathscr{H}).$ Remark \ref{rem:holdercoef} suggests slowing down the sequence $(W_n)_{n\geq 0}$ by considering the sequence $(\gamma_n)_{n\geq 0}$ with $\gamma_n=\lceil n/16 \rceil$, to define 
\begin{equation}\label{eq:averaging13}
V_n=W_{\gamma_n}.
\end{equation}
Moreover, for $n<0$, we define $V_n=V_{-n}$ and obtain the desired isometry
\begin{equation}\label{eq:averaging14}
V:=\bigoplus_{n\in \mathbb Z} V_n: \mathscr{H}\otimes \ell^2(\mathbb Z)\to \mathscr{H}\otimes \mathscr{H}\otimes \ell^2(\mathbb Z).
\end{equation}

\begin{remark}\label{rem:averaging_Markov_par}
Let $m\in \mathbb N$ and $|r|\leq m-1$. From Theorem \ref{thm:theoremgraphSmalespaces} recall that the open cover $\varphi^r(\mathcal{R}_m^{\delta})$, corresponding to the basic isometry $\iota_{m,r}$, refines the $\delta$-enlarged Markov partition $\mathcal{R}_{m-|r|}^{\delta}$. This means that the open covers associated with the basic isometries constituting each $W_{n}$ (\ref{eq:averaging12}) refine the $\delta$-enlarged Markov partition $\mathcal{R}_{n}^{\delta}$. Consequently, as $n$ goes to $+ \infty$, the diameter of the supports of the coefficient functions (square roots of partitions of unity) of the basic isometries in $W_n$ tends to zero. The same holds for the isometry $V_n=W_{\gamma_{|n|}}$, as $|n|$ tends to infinity. 
\end{remark}

\begin{prop}\label{prop:quasi-invariance}
For every $j\in \mathbb Z$, the isometry $V:\mathscr{H}\otimes \ell^2(\mathbb Z)\to \mathscr{H}\otimes \mathscr{H}\otimes \ell^2(\mathbb Z)$ satisfies 
\begin{align*}
&\lim_{n\to \pm \infty} \|V_{n+j}-V_n\|=0,\\
&\lim_{n\to \pm \infty} \|(u\otimes u)^jV_{n}-V_nu^j\|=0.
\end{align*}
\end{prop}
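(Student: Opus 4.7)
The plan is to reduce both limits to the corresponding statements for the single sequence $(W_m)_{m \geq 0}$ as $m \to \infty$, and then exploit the orthogonality of ranges of the basic isometries. Writing $\gamma_n = \lceil |n|/16 \rceil$ so that $V_n = W_{\gamma_n}$ in both signs of $n$, the sequence $\gamma_n$ tends to $\infty$ as $|n| \to \infty$ and satisfies $|\gamma_{n+j} - \gamma_n| \leq |j|/16 + 1$. I would first telescope $V_{n+j} - V_n$ through a uniformly bounded number of consecutive differences $W_{m+1}-W_m$, reducing the first limit to showing $\|W_{m+1}-W_m\| \to 0$. For the second limit, the standard telescoping
$$(u\otimes u)^j V_n - V_n u^j = \sum_{k=0}^{j-1}(u\otimes u)^{j-1-k}\bigl[(u\otimes u)V_n - V_n u\bigr]u^k$$
(handling $j<0$ by unitarity) together with unitarity of $u,\,u\otimes u$ reduces the second claim to showing $\|(u\otimes u)W_m - W_m u\| \to 0$. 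Finally, $V_n = V_{-n}$ transports the $n\to -\infty$ statements to the $n\to+\infty$ case.

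Lemma \ref{lem:orthogonality}, via equation \eqref{eq:averaging8}, ensures that distinct basic isometries appearing in our constructions have pairwise orthogonal ranges. This yields $\theta_m^*\theta_{m'} = (2m+1)\delta_{m,m'}I$ and hence the key norm identity
$$\Bigl\|\sum_m \alpha_m \theta_m \Bigr\|^2 = \sum_m |\alpha_m|^2 (2m+1).$$
Applying this to the expansion
$$W_{m+1}-W_m = (c_{m+1}^{-1}-c_m^{-1})\sum_{k=m+1}^{2m}\theta_k + c_{m+1}^{-1}(\theta_{2m+1}+\theta_{2m+2}) - c_m^{-1}\theta_m,$$
and using the elementary estimates $c_m \sim \sqrt{3}\,m$, $c_{m+1}^{-1}-c_m^{-1} = O(m^{-2})$, and $\sum_{k=m+1}^{2m}(2k+1) = O(m^2)$, I would conclude that $\|W_{m+1}-W_m\|^2 = O(m^{-1})$, giving the first limit.

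For the second limit the crucial observation is a telescoping identity inside $\theta_m$. The covariance $\iota_{m,r} = (u\otimes u)^r \iota_{m,0}u^{-r}$ gives $(u\otimes u)\iota_{m,r} = \iota_{m,r+1}u$, so telescoping the defining sum of $\theta_m$ yields
$$(u\otimes u)\theta_m - \theta_m u = \bigl(\iota_{2m,m+1} - \iota_{2m,-m}\bigr)u.$$
Summing over $k\in\{m,\ldots,2m\}$ and applying Lemma \ref{lem:orthogonality} once more (the index pairs $(2k,k+1)$ and $(2k',-k')$ are mutually distinct for $k,k'\geq 1$), the whole family $\{\iota_{2k,k+1},\iota_{2k,-k}\}_{k=m}^{2m}$ has pairwise orthogonal ranges, so
$$\|(u\otimes u)W_m - W_m u\|^2 = c_m^{-2}\cdot 2(m+1) = \frac{2}{3m+1} \longrightarrow 0.$$

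The main obstacle is spotting the telescoping identity for $(u\otimes u)\theta_m - \theta_m u$: without that clean collapse one would only estimate $\|(u\otimes u)\theta_m - \theta_m u\|$ by the number of summands in $\theta_m$, which grows linearly with $m$, and the normalisation $c_m \sim \sqrt{3}\,m$ would not suffice to beat it. Once the telescoping is available, the two arguments become parallel Bessel-type computations, the $n\to -\infty$ case is handled by symmetry $V_n=V_{-n}$, and extension from $j=\pm 1$ to arbitrary $j\in\mathbb{Z}$ is routine.
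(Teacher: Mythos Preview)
Your argument is correct. Both limits follow from the orthogonality of the basic isometries (Lemma~\ref{lem:orthogonality}) together with the covariance relation $(u\otimes u)\iota_{m,r}=\iota_{m,r+1}u$, and your computations are sound: the expansion of $W_{m+1}-W_m$ into orthogonal pieces and the boundary-term identity $(u\otimes u)\theta_m-\theta_m u=(\iota_{2m,m+1}-\iota_{2m,-m})u$ both give the required decay.

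The paper organises the same ingredients differently. Rather than telescoping to the case $j=1$ and then expanding the difference, it works directly with general $j$ and computes the cross terms $W_{n+j}^{*}W_n$ and $W_n^{*}(u\otimes u)^{j}W_n$ as explicit scalar multiples of $I$ and $u^j$ respectively (using the same orthogonality and covariance), and then reads off $\|W_{n+j}-W_n\|^2=2-2a_{n,j}$ and $\|(u\otimes u)^jW_n-W_nu^j\|^2=2-2\frac{3n+1-j}{3n+1}$. This avoids the reduction steps and gives the rate for all $j$ in one stroke; your approach, by contrast, makes the cancellation mechanism transparent by isolating the two ``boundary'' isometries that survive in $(u\otimes u)\theta_m-\theta_m u$. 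Both routes yield $O(m^{-1/2})$ decay and neither requires anything beyond Lemma~\ref{lem:orthogonality}.
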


\begin{proof}
It suffices to consider the limits to $+\infty$ and the isometries $W_n$ instead of $V_n$. Also, we can assume that $j\geq 0$. For the first limit we estimate $\|W_{n+j}-W_n\|^2$. Specifically, it suffices to show that for all $\eta \in \mathscr{H}$ with $\|\eta\|=1$, the inner products $\langle W_{n+j}(\eta), W_n(\eta)\rangle$ and $\langle W_{n}(\eta), W_{n+j}(\eta)\rangle$ are independent of $\eta$ and converge to one. Indeed, for $n\geq j$ we have that 
\begin{align*}
W_{n+j}^* W_n &= (c_{n+j}c_n)^{-1}(\sum_{m=n+j}^{2(n+j)}\theta_{m}^*)(\sum_{m'=n}^{2n}\theta_{m'})\\
&= (c_{n+j}c_n)^{-1}\sum_{m=n+j}^{2n}(2m+1)I\\
&= \frac{(n+1-j)(3n+1+j)}{((n+1+j)(3n+1+3j)(n+1)(3n+1))^{1/2}}I.
\end{align*}
Consequently, if we denote the latter multiple of the identity operator $I$ by $a_{n,j}$, since $W_n$ and $W_{n+j}$ are isometries, and $\|\eta\|=1$, we obtain that $$\|W_{n+j}-W_n\|=(2-2a_{n,j})^{1/2}=O(n^{-1}).$$

Similarly, we estimate $\|(u\otimes u)^jW_{n}-W_nu^j\|^2$ by showing that $W_n^*(u\otimes u)^jW_{n}$ is a multiple of $u^j$. Specifically, for $n\geq j$ we have
\begin{align*}
(u\otimes u)^jW_{n}&= c_n^{-1}\sum_{m=n}^{2n}(u\otimes u)^j\sum_{r=-m}^{m} \iota_{2m,r}\\
&= c_n^{-1}\sum_{m=n}^{2n}\sum_{r=-m+j}^{m+j} \iota_{2m,r}u^j.
\end{align*} 
Since the basic isometries have mutually orthogonal ranges we have
\begin{align*}
W_n^*(u\otimes u)^jW_{n}&=c_n^{-2}\sum_{m=n}^{2n}\sum_{r=-m+j}^{m}\iota_{2m,r}^*\iota_{2m,r}u^j\\
&= \frac{3n+1-j}{3n+1}u^j.
\end{align*}
As a result, we have $\|(u\otimes u)^jW_{n}-W_nu^j\|=O(n^{-1/2}).$ \qedhere
\end{proof}

\section{Lifting the fundamental class: a Fredholm module representative}\label{sec:DilateKPW}

Theorem \ref{thm:KK_1_lift_Ruelle} below proves the main result of the paper. We use the constructions of Sections \ref{sec:Towards_a_KK_1_lift} and \ref{sec:averaging_isom} to build a $\theta$-summable Fredholm module representative of the fundamental class $\Delta$ of Definition \ref{def:KPWextclass}. The proof is obtained through four rather technical lemmas. We remind the reader that Subsection \ref{sec:intuition_isometry} provides an overview of the proof and gives intuition on our constructions. We will refer back to it often.

Let $(X,\varphi)$ be an irreducible Smale space. Fix two periodic orbits $P$ and $Q$ such that $P\cap Q=\varnothing$ and consider the stable and unstable groupoids $G^s(Q)$ and $G^u(P)$. Without loss of generality assume its metric $d$ is self-similar, owing to Theorem \ref{lem:Artiguelemma} and the straightforward fact that the stable and unstable groupoids are invariants of topological conjugacy.  In \cite{Gero2}, the first author studies the smooth structure and $\Kt$-homology of the Ruelle algebras associated with these groupoids. In particular, \cite[Theorem 6.4]{Gero2} proves that the self-similar metric $d$ yields well-behaved compatible groupoid metrics $D_{s,d}$ and $D_{u,d}$ on $G^s(Q)$ and $G^u(P)$. This gives the following proposition.

\begin{prop}[{\cite[Proposition 7.1]{Gero2}}]\label{prop:Lip_alg_Smale_groupoids}
The complex vector space of compactly supported Lipschitz functions $\Lip_c(G^s(Q),D_{s,d})$ forms a dense $*$-subalgebra of $\mathcal{S}(Q)$. Moreover, it is $\alpha_s$-invariant, and therefore, the algebraic crossed product 
\[
\Lambda_{s,d}(Q,\alpha_s)=  \Lip_c(G^s(Q),D_{s,d})\rtimes_{\alpha_s,\text{alg}} \mathbb Z
\] 
is a dense $*$-subalgebra of the Ruelle algebra $\mathcal{R}^s(Q)$. Similarly, in the unstable case
\[
\Lambda_{u,d}(P,\alpha_u)=  \Lip_c(G^u(P),D_{u,d})\rtimes_{\alpha_u,\text{alg}} \mathbb Z
\]
is dense $*$-subalgebra of $\mathcal{R}^u(P)$.
\end{prop}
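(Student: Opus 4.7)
The plan is to verify four claims in order: (a) $\Lip_c(G^s(Q),D_{s,d})$ is closed under convolution and involution, hence is a $*$-subalgebra of $C_c(G^s(Q))$; (b) it is dense in $\mathcal{S}(Q)$; (c) the automorphism $\alpha_s$ preserves it; and (d) density of the algebraic crossed product inside $\mathcal{R}^s(Q)$ then follows from a standard argument. The essential geometric input is that the metric $D_{s,d}$ constructed in \cite[Theorem 6.4]{Gero2} is compatible with the \'etale structure in the sense that the source and range maps of $G^s(Q)$ are locally Lipschitz and the stable holonomy maps, restricted to the bisections $V^s(v,w,h^s,\eta,N)$ of Lemma \ref{stable nbhd}, are bi-Lipschitz. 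Combined with self-similarity of $d$, these properties make the whole structure quantitatively well-behaved and let the verifications proceed.

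For (a), the involution $f \mapsto f^*$ is preserved by Lipschitz functions because the inversion $(v,w)\mapsto (w,v)$ is an isometry of $D_{s,d}$, and compact support is obviously preserved. For the convolution, decompose $f,g\in \Lip_c(G^s(Q),D_{s,d})$ as finite sums of functions supported on small bisections using a Lipschitz partition of unity subordinate to a finite cover of the (compact) supports by bisections; on a bisection the convolution collapses to a pointwise product composed with a holonomy map, i.e.\ a term of the form $f(h^s(x),x)\,g(x,w)$, which is Lipschitz because holonomy is bi-Lipschitz and products of bounded Lipschitz functions are Lipschitz. This shows the product $f\cdot g$ lies in $\Lip_c(G^s(Q),D_{s,d})$. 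Step (c) is immediate from self-similarity: $\varphi,\varphi^{-1}$ are $\lambda$-Lipschitz on $(X,d)$ by Definition \ref{def:selfsimilarSmalespace}, and the definition of $D_{s,d}$ in \cite{Gero2} makes $\varphi\times \varphi$ bi-Lipschitz on $G^s(Q)$, so $\alpha_s(f)=f\circ(\varphi^{-1}\times \varphi^{-1})$ has Lipschitz constant controlled by a power of $\Lip(\varphi^{-1})$ times $\Lip(f)$, and manifestly has compact support.

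Density in (b) is the step requiring the most care. Given any $a\in \mathcal{S}(Q)$ and $\varepsilon>0$, by construction one can approximate $a$ in $C^*$-norm by some $f\in C_c(G^s(Q))$. Using a partition of unity on a cover of $\supp f$ by bisections one reduces to the case where $\supp f$ lies in a single $V^s(v,w,h^s,\eta,N)$; on a compact subset of such a bisection (homeomorphic to a compact subset of the metric space $X^u(w,\eta)$), Lipschitz functions are uniformly dense in continuous functions by a Stone–Weierstrass argument, so $f$ is uniformly approximable by Lipschitz functions supported in the same bisection. The $C^*$-norm on $\mathcal{S}(Q)$ restricted to functions supported in a fixed bisection is dominated by the supremum norm (the regular representation of such a function is a partial isometry-like operator whose norm equals its sup norm), so uniform approximation implies $C^*$-norm approximation, which gives density. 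Step (d) is then formal: any $\alpha$-invariant dense $*$-subalgebra of the coefficient algebra of a $\mathbb Z$-crossed product yields an algebraic crossed product dense in the $C^*$-crossed product, since finite Laurent polynomials in the implementing unitary are dense in $C_c(\mathbb Z, \mathcal{S}(Q))$ for the $L^1$-norm, hence for the $C^*$-norm.

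The main obstacle I anticipate is the closure under convolution in step (a): without careful handling of holonomy maps one could easily worry that the Lipschitz constants blow up on overlaps of bisections. The correct way to side-step this is to insist on the compatibility conditions for $D_{s,d}$ supplied by \cite[Theorem 6.4]{Gero2}, and then to argue purely locally on bisections, using that a finite sum of Lipschitz functions is Lipschitz with an additive estimate on the constant. A similar plan gives the unstable statement verbatim, since the roles of $\varphi$ and $\varphi^{-1}$ are symmetric in the self-similar setting.
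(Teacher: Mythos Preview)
The paper does not prove this proposition; it is quoted as \cite[Proposition 7.1]{Gero2}, with the preceding sentence pointing to \cite[Theorem 6.4]{Gero2} for the construction of the groupoid metrics $D_{s,d}$ and $D_{u,d}$. So there is no proof in the paper to compare against.

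That said, your outline is a reasonable reconstruction of what such a proof must contain, and the geometric inputs you isolate---local bi-Lipschitzness of source and range, bi-Lipschitz holonomy on bisections, and bi-Lipschitzness of $\varphi\times\varphi$ for $D_{s,d}$---are precisely the compatibility properties that \cite[Theorem 6.4]{Gero2} is designed to supply. Two small points are worth tightening. In step (b), when you approximate a continuous $f$ supported in a bisection by a Lipschitz function, you must also arrange that the approximant vanishes outside a compact subset of that same bisection (e.g.\ by multiplying by a Lipschitz cutoff), or else it is not an element of $C_c(G^s(Q))$ at all; you do this implicitly in (a) but not in (b). In step (a), the partition of unity you invoke to reduce to bisections must itself consist of compactly supported Lipschitz functions on $G^s(Q)$, which again presupposes the metric compatibility from \cite{Gero2}; you acknowledge this, but it is worth emphasising that without those properties the argument would not close. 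Step (d) is indeed formal once (a)--(c) hold.
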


In \cite[Theorem 7.15]{Gero2}, it is proved that the extension $\tau_{\Delta}:\mathcal{R}^s(Q)\otimes \mathcal{R}^u(P)\to \mathcal{Q}(\mathscr{H}\otimes \ell^2(\mathbb Z))$ from \cite{KPW} is finitely-smooth on the algebra $\Lambda_{s,d}(Q,\alpha_s)\otimes_{\text{alg}}\Lambda_{u,d}(P,\alpha_u)$.

One of the technical parts of Section \ref{sec:Towards_a_KK_1_lift} was constructing the symmetric representation (\ref{eq:symmetric_rep}):
\[
\rho:\mathcal{R}^s(Q)\otimes \mathcal{R}^u(P)\to \mathcal{B}(\mathscr{H}\otimes \mathscr{H}\otimes \ell^2(\mathbb Z)).
\]
Then, using a refining sequence of $\delta$-enlarged Markov partitions $(\mathcal{R}_n^{\delta})_{n\geq 0}$, we constructed the isometry (\ref{eq:averaging14}) 
\[
V=\bigoplus_{n\in \mathbb Z} V_n:\mathscr{H}\otimes \ell^2(\mathbb Z)\to  \mathscr{H}\otimes \mathscr{H}\otimes \ell^2(\mathbb Z).
\]
Each isometry $V_n$ is the average of certain basic isometries $\iota_{m,s}:\mathscr{H}\to \mathscr{H}\otimes \mathscr{H}$ (\ref{eq:averaging5}). We now state the main result of the paper.

\begin{thm}\label{thm:KK_1_lift_Ruelle}
The triple $
(\mathscr{H}\otimes \mathscr{H}\otimes \ell^2(\mathbb Z), \rho, 2V V^*-1)$ is an odd Fredholm module over $\mathcal{R}^s(Q)\otimes \mathcal{R}^u(P)$ representing the fundamental class $\Delta \in \KKt_1(\mathcal{R}^s(Q)\otimes \mathcal{R}^u(P),\mathbb C)$, and is $\theta$-summable on the dense $*$-subalgebra $\Lambda_{s,d}(Q,\alpha_s)\otimes_{\text{alg}}\Lambda_{u,d}(P,\alpha_u)$.
\end{thm}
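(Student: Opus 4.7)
The plan is to apply Theorem \ref{thm:Liftingtheorem} with $\mathcal I=\Li(\mathscr{H}\otimes\ell^2(\mathbb Z))$: once the compression identity \eqref{eq:KK_1_lift_equation} is established on a dense $*$-subalgebra with quantitative error estimates, that theorem automatically produces an odd $\Li^{1/2}$-summable, i.e.\ $\theta$-summable, Fredholm module representing the class $\Delta$ of Definition \ref{def:KPWextclass}. By linearity and density, it suffices to verify \eqref{eq:KK_1_lift_equation} on elementary monomials $au^j\otimes bu^{j'}$ with $a\in\Lip_c(G^s(Q),D_{s,d})$ and $b\in\Lip_c(G^u(P),D_{u,d})$ supported on bisections of the form of Lemma \ref{stable nbhd}. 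As already reduced in Subsection \ref{sec:intuition_isometry}, because both sides of \eqref{eq:KK_1_lift_equation} are supported on the $(j-j')$-off-diagonal and $V=\bigoplus_n V_n$ is block-diagonal, the problem collapses to the entrywise statements \eqref{eq:intuition_isom_5}--\eqref{eq:intuition_isom_6}. The quasi-invariance properties of Proposition \ref{prop:quasi-invariance} then allow me to replace $V_{2n+k}$ by $V_{2n}$ and to commute the powers of $u$ past $V_{2n}$ modulo norm-null terms, so that the core task becomes the norm limit
\[
\lim_{|n|\to\infty}\bigl\|V_{2n}^*\bigl(\alpha_u^{-n}(b)\otimes\alpha_s^n(a)\bigr)V_{2n}-\alpha_u^{-n}(b)\alpha_s^n(a)\bigr\|=0,
\]
together with the compactness of each fixed-$n$ residual. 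The latter is immediate from Lemma \ref{lem:stableunstablerankone}, since $a,b$ are compactly supported and $V_{2n}$ is a finite average of rank-controlled basic isometries.

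The technical heart is the compression lemma. Expanding $V_{2n}=W_{\gamma_{2n}}$ via \eqref{eq:averaging12}--\eqref{eq:averaging13}, one obtains a double sum of terms $c_{\gamma_{2n}}^{-2}\iota_{m,r}^*(\alpha_u^{-n}(b)\otimes\alpha_s^n(a))\iota_{m',r'}$. For the diagonal $(m,r)=(m',r')$ part, a direct computation using the explicit formulas \eqref{eq:averaging5}--\eqref{eq:averaging6} and the bracket axioms B1--B4 shows that $\alpha_u^{-n}(b)$ moves the first factor only along the local unstable set while $\alpha_s^n(a)$ moves the second factor only along the local stable set, so that the identity $[[y,g],[g,y]]=y$ makes $\iota_{m,r}^*$ recombine the two factors back into $\alpha_u^{-n}(b)\alpha_s^n(a)\delta_y$, up to the modulus of continuity of $f_{m,k}\circ\varphi^{-r}$ between the pre- and post-bracket points. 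For the off-diagonal terms $(m,r)\ne(m',r')$, I would invoke Lemma \ref{lem:orthogonality} together with the aperiodicity of $\mathcal G$ and the local-product refinement of $(\mathcal R_n^\delta)_{n\ge 0}$ to show that the support conditions appearing after compression by $\iota_{m,r}^*$ fail except on an asymptotically negligible set, so their total contribution is norm-null. The linear slow-down $\gamma_{2n}\sim n/8$ built into $V_n$ combined with the Hölder estimate of Remark \ref{rem:holdercoef} and the refinement statement of Remark \ref{rem:averaging_Markov_par} bound the remaining error by $c'\lambda^{-cn}$ uniformly, establishing the norm limit with exponential rate.

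For $\theta$-summability on $\Lambda_{s,d}(Q,\alpha_s)\otimes_{\text{alg}}\Lambda_{u,d}(P,\alpha_u)$, I would combine the exponential bound from the previous step with the cardinality estimate $\#\mathcal R_n^\delta\le Ce^{2(\ent(\varphi)+\varepsilon)n}$ from Theorem \ref{thm:theoremgraphSmalespaces}(6): the $n$-th block of $[\rho(x),2VV^*-1]$ and of $\rho(x)((2VV^*-1)^2-1)$ has operator norm $O(\lambda^{-cn})$ and rank bounded by a quantity growing only exponentially in $n$. Reindexing the pooled singular-value sequence produces a $k$-th singular value of order $O((\log k)^{-1})$, i.e.\ membership in $\Li$, which is the defining bound of $\Li^{1/2}$-summability for the commutator. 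The main obstacle is the second paragraph: carefully carrying out the bracket arithmetic so that the diagonal compression lands \emph{exactly} on $\alpha_u^{-n}(b)\alpha_s^n(a)$, controlling the Hölder exponents of the partition-of-unity coefficients against the hyperbolic distortion of $\alpha_s^n,\alpha_u^{-n}$, and proving that the off-diagonal basic-isometry cross-terms indeed vanish to norm zero. This is precisely why the refining sequence of $\delta$-enlarged Markov partitions and the aperiodic sample $\mathcal G$ had to be engineered with the highly controlled Lipschitz and Lebesgue numbers of Theorem \ref{thm:theoremgraphSmalespaces}.
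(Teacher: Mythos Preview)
Your overall architecture matches the paper's: reduce to the entrywise statements \eqref{eq:intuition_isom_5}--\eqref{eq:intuition_isom_6}, control the diagonal basic-isometry terms by a H\"older/bracket computation, kill the cross terms by aperiodicity, and feed a rank-versus-norm estimate into Theorem~\ref{thm:Liftingtheorem}. That part is sound and would yield a Fredholm module representing $\Delta$.

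The gap is in the summability. You reduce to the ``core'' expression $V_{2n}^*(\alpha_u^{-n}(b)\otimes\alpha_s^n(a))V_{2n}$ by invoking Proposition~\ref{prop:quasi-invariance}, and then claim the remaining error is $O(\lambda^{-cn})$. But the second quasi-invariance estimate $\|(u\otimes u)^iV_{2n}-V_{2n}u^i\|$ is only $O(n^{-1/2})$ (see the proof of Proposition~\ref{prop:quasi-invariance}), so commuting the $u^i$ factors past $V_{2n}$ already costs $O(n^{-1/2})$ in norm, uniformly dominating any exponential gain on the core. Paired with the exponential rank bound from Theorem~\ref{thm:theoremgraphSmalespaces}(6), an $O(n^{-1/2})$ block norm gives singular values $s_k=O((\log k)^{-1/2})$, i.e.\ only $\Li^{1/2}$, not $\Li$. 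Theorem~\ref{thm:Liftingtheorem} then produces a Fredholm module that is $(\Li^{1/2})^{1/2}=\Li^{1/4}$-summable, strictly weaker than $\theta$-summable. (Incidentally, your singular-value bookkeeping is also inconsistent: genuinely exponential block norms with exponential ranks would give polynomial decay of $s_k$, i.e.\ finite summability, not $O((\log k)^{-1})$.)

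The paper avoids this loss by \emph{not} passing through Proposition~\ref{prop:quasi-invariance} for the quantitative estimate. It keeps the full parameters $i,k,l$ and proves two facts directly: first (Lemma~\ref{lem:KK_1_lift_orthogonality}), for $n$ large the cross terms $\iota_{2m',r'}^*(\alpha_u^{-n}(b)u^i\otimes\alpha_s^n(a)u^i)\iota_{2m,r}$ with $(m',r')\neq(m,r+i)$ vanish \emph{exactly}, not merely asymptotically---this is a genuine strengthening of Lemma~\ref{lem:orthogonality}, since the sandwiched operator can move points between rectangles, and it needs the argument you only sketched; second (Lemma~\ref{lem:KK_1_lift_convergence2}), after this cancellation one is left with a sum over a slightly truncated index set, and the deficit $1-\zeta_n$ is $O(n^{-1})$ rather than the $O(n^{-1/2})$ you would get from the operator-norm quasi-invariance. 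That extra factor of $n^{-1/2}$ is exactly what pushes the compression error from $\Li^{1/2}$ down to $\Li$ and secures $\theta$-summability.
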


As noted above, we will prove Theorem \ref{thm:KK_1_lift_Ruelle} with a sequence of technical lemmas. The following establishes a refined version of \eqref{eq:intuition_isom_5}. Note that the self-similarity of the metric $d$ is not used.

\begin{lemma}\label{lem:KK_1_lift_ranks}
Let $a\in C_c(G^s(Q)),\, b\in C_c(G^u(P))$ and $i,k,l\in \mathbb Z$. Then, for every $n\in \mathbb Z$, the operator $$T_n=V_{2n+l}^*(\alpha_u^{-n}(b) u^{i} \otimes \alpha_s^n(a) u^{i}) V_{2n+k} -\alpha_u^{-n}(b)\alpha_s^n(a)u^{i}$$ has finite rank. In particular,
\begin{enumerate}[(i)]
\item there is $n_0\in \mathbb N$ such that $\rank (T_n)=0$, for all $n\leq -n_0$;
\item there is a constant $C_1>0$ so that, for every $\varepsilon >0$ there is $n_1\in \mathbb N$ such that, for all $n\geq n_1$, we have $\rank (T_n)\leq C_1 e^{2(\ent(\varphi)+\varepsilon)n}.$
\end{enumerate}
\end{lemma}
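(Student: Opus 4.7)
The plan is to bound the rank of $T_n$ by handling its two summands separately. For the second summand $\alpha_u^{-n}(b)\alpha_s^n(a)u^i$, decompose $a = \sum_i a_i$ and $b = \sum_j b_j$ as finite sums with supports in basic neighbourhoods $V^s(v_i,w_i,h^s_i,\eta_i,N_i)$ and $V^u(v'_j,w'_j,h^u_j,\eta'_j,N'_j)$. Since $\alpha_s^n$ and $\alpha_u^{-n}$ only translate base points, Lemma \ref{lem:stableunstablerankone} applies to each pair to give $\rank\bigl(\alpha_u^{-n}(b_j)\alpha_s^n(a_i)\bigr)\leq 1$, so the total rank of this summand is bounded by a constant $C_0$ independent of $n$, and right-multiplication by $u^i$ preserves rank. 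For (i) applied to this summand, the identity
\[
\alpha_u^{m}(b_j)\alpha_s^{-m}(a_i) = u^{m}\bigl(b_j\,\alpha_s^{-2m}(a_i)\bigr)u^{-m}
\]
combined with Lemma \ref{lem:minusinftylimit} forces vanishing once $m = |n| \geq M/2$ for a threshold $M$ depending only on the supports of $a$ and $b$.

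For the first summand $V_{2n+l}^*(\alpha_u^{-n}(b)u^i \otimes \alpha_s^n(a) u^i) V_{2n+k}$, I expand $V_s = c_{\gamma_{|s|}}^{-1}\sum_{p,r}\iota_{2p,r}$ to rewrite it as a finite double sum of compositions $\iota_{2p',r'}^*\,X\,\iota_{2p,r}$ over $p,p' \in [\gamma_{|2n+k|},\,2\gamma_{|2n+k|}]$ and $|r|\leq p$, $|r'|\leq p'$. Using the elementary decompositions of $a, b$ and the formulas \eqref{eq:averaging5}--\eqref{eq:averaging6}, a nonzero contribution of $\iota_{2p',r'}^*\,X\,\iota_{2p,r}$ at $\delta_{y_0}$ requires (after computing $X\iota_{2p,r}\delta_{y_0} = c\,\delta_{x^b}\otimes\delta_{z^a}$ for specific points $x^b, z^a$) that the bracket $[z^a, x^b]$ coincides with some $\varphi^{r'}(g_{2p',k'})$ from the aperiodic sample $\mathcal{G}$. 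Because $\mathcal{G}$ is discrete and the parametrisation $y_0\mapsto(x^b,z^a)$ is locally injective (via the bracket homeomorphism \eqref{eq:bracketmap} together with the hyperbolic contraction/expansion axioms C1--C2), each fixed triple $(p',r',k')$ allows at most one $y_0\in X^h(P,Q)$ to contribute, yielding $\rank\bigl(\iota_{2p',r'}^*\,X\,\iota_{2p,r}\bigr) \leq C_0\,\#\mathcal{R}_{2p'}^\delta$.

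For (i) with $n\leq -n_0$: the same bracket reduction produces coefficients that algebraically reduce to expressions of the form $b_j\,\alpha_s^{-2|n|+O(1)}(a_i)$, which vanish by Lemma \ref{lem:minusinftylimit} once $|n|$ exceeds a uniform threshold. Hence every composition $\iota_{2p',r'}^*\,X\,\iota_{2p,r}$ is zero, so the first summand vanishes along with the second. For (ii) with $n\geq n_1$: since $2p' \leq 4\gamma_{|2n+l|} \leq n/2 + O(1)$, Theorem \ref{thm:theoremgraphSmalespaces}(6) gives $\#\mathcal{R}_{2p'}^\delta \leq C\,e^{(\ent(\varphi)+\varepsilon/2)n}$ for $n$ large, and summing the $O(n^4)$ index tuples produces a rank bound of $C'n^4\,e^{(\ent(\varphi)+\varepsilon/2)n}$ for the first summand, which is eventually dominated by $C_1\,e^{2(\ent(\varphi)+\varepsilon)n}$. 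Adding the constant bound $C_0$ from the second summand completes the estimate.

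The main obstacle is the precise bracket-axiom computation establishing local injectivity of $y_0 \mapsto (x^b, z^a)$ and verifying the compatibility condition $[z^a, x^b]\in\varphi^{r'}(\mathcal{G})$: this is where the a priori infinite-rank output of $V^*_{2n+l}\,X\,V_{2n+k}$ is reduced, by a purely geometric argument combining discreteness of the aperiodic sample $\mathcal{G}$ with the rigidity of the bracket structure, to a rank proportional to the number of rectangles at the relevant Markov scale.
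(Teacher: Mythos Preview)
Your central finite-rank argument for the first summand does not work as stated. You claim that a nonzero contribution of $\iota_{2p',r'}^*X\iota_{2p,r}$ at $\delta_{y_0}$ forces $[z^a,x^b]=\varphi^{r'}(g_{2p',k'})$ for some triple $(p',r',k')$, and that this isolates at most one $y_0$. The condition is correct, but its consequence is not: saying $[z^a,x^b]=\varphi^{r'}(g_{2p',k'})$ is \emph{equivalent} to $x^b\in X^u(\varphi^{r'}(g_{2p',k'}),\cdot)$ and $z^a\in X^s(\varphi^{r'}(g_{2p',k'}),\cdot)$, and that is an \emph{open} condition on the pair $(x^b,z^a)$, hence on $y_0$ via the (locally injective, hence locally open) correspondence $y_0\mapsto(x^b,z^a)$. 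So a fixed triple $(p',r',k')$ allows a whole open set of $y_0$'s, not a single one. Moreover, the output of $\iota_{2p',r'}^*$ lives at $[x^b,z^a]$ (not at $[z^a,x^b]$), and this point varies continuously with $y_0$. Thus neither the domain nor the range of $\iota_{2p',r'}^*X\iota_{2p,r}$ is bounded by $\#\mathcal{R}_{2p'}^\delta$ through your argument, and the rank estimate in (ii) collapses. (A minor additional issue: $X\iota_{2p,r}\delta_{y_0}$ is a sum over $j$ of elementary tensors, not a single one.)

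The paper takes a different route that avoids the adjoint entirely for the rank bound. It shows directly that if $S_n(\delta_y)\neq 0$ then $y$ must lie in a set of the form $\varphi^{-(n+i)}(X^s(\varphi^{\ell}(w'),\varepsilon_X))\cap\varphi^{-(i-n)}(X^u(\varphi^{-\ell}(w),\varepsilon_X))$: a compact segment of a global stable set intersected transversally with a compact segment of a global unstable set, which is finite. For (ii) the cardinality of this intersection is then controlled by an entropy-type growth estimate, giving the $e^{2(\ent(\varphi)+\varepsilon)n}$ bound. Your argument for (i) also has a gap: the claim that coefficients ``algebraically reduce to expressions of the form $b_j\alpha_s^{-2|n|+O(1)}(a_i)$'' conflates scalar values (of $a,b$ at specific points produced by brackets and holonomies) with operator products; Lemma~\ref{lem:minusinftylimit} does not apply in that form. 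The paper instead argues geometrically from $P\cap Q=\varnothing$: for $n$ very negative the relevant points would have to lie simultaneously in disjoint neighbourhoods of $P$ and of $Q$, which is impossible.
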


\begin{proof}
Assume that $\supp(a)\subset V^s(v,w,h^s,\eta,N)$ and $\supp(b)\subset V^u(v',w',h^u,\eta',N')$ and let $S_n= V_{2n+l}^*(\alpha_u^{-n}(b) u^{i} \otimes \alpha_s^n(a) u^{i}) V_{2n+k}$. Fix some $n\in \mathbb Z$. The operator $\alpha_u^{-n}(b)\alpha_s^n(a)u^{i}$ has finite rank (see the proof of \cite[Lemma 6.3]{KPW}) and also
\begin{equation}\label{eq:KK_1_lift_ranks_1}
\rank(\alpha_u^{-n}(b)\alpha_s^n(a)u^{i})=\rank(\alpha_u^{-n}(b)\alpha_s^n(a))=\rank(b\alpha_s^{2n}(a)).
\end{equation}
We claim that the operator $S_n$ has finite rank too. If $y\in X^h(P,Q)$ and $S_n(\delta_y)\neq 0$, then there is some basic isometry $\iota_{2m,r}$, for $m\in \{\gamma_{|2n+k|},\ldots, 2\gamma_{|2n+k|}\}$ and $|r|\leq m$, such that $(\alpha_u^{-n}(b) u^{i} \otimes \alpha_s^n(a) u^{i})\iota_{2m,r}(\delta_y)\neq 0$. We have 
\[
(u^i\otimes u^i)\iota_{2m,r}(\delta_y)=\sum_{j=1}^{\# \mathcal{R}_{2m}^\delta}f_{2m,j}(\varphi^{-r}(y))\delta_{\varphi^i[y,\varphi^r(g_{2m,j})]}\otimes \delta_{\varphi^i[\varphi^r(g_{2m,j}),y]},
\]
and hence there is some $j\in \{1,\ldots, \# \mathcal{R}_{2m}^\delta\}$ such that $f_{2m,j}(\varphi^{-r}(y))\neq 0$, meaning that $\varphi^i[y,\varphi^r(g_{2m,j})],\,\varphi^i[\varphi^r(g_{2m,j}),y]$ are well-defined with
\[
\alpha_u^{-n}(b)\delta_{\varphi^i[y,\varphi^r(g_{2m,j})]} \neq 0, \quad  \alpha_s^n(a)\delta_{\varphi^i[\varphi^r(g_{2m,j}),y]}\neq 0.
\]
Consequently, we have
\begin{equation}\label{eq:KK_1_lift_ranks_2}
\varphi^{n+i}[y,\varphi^r(g_{2m,j})] \in X^s(w',\eta'), \quad \varphi^{i-n}[\varphi^r(g_{2m,j}),y] \in X^u(w,\eta).
\end{equation}
Let $\ell \in \mathbb N$ be large enough so that $\ell+n+i>0$ and $i-n-\ell <0$. Then, 
$$
\varphi^{\ell+n+i}(y)\in X^s(\varphi^{\ell+n+i}[y,\varphi^r(g_{2m,j})],\lambda^{-(\ell+n+i)}\varepsilon_X/2)$$ and 
$$\varphi^{\ell+n+i}[y,\varphi^r(g_{2m,j})]\in X^s(\varphi^{\ell}(w'),\lambda^{-\ell}\eta').$$ Therefore, $\varphi^{\ell+n+i}(y)\in X^s(\varphi^{\ell}(w'),\varepsilon_X)$, or equivalently 
\begin{equation}\label{eq:KK_1_lift_ranks_3}
y\in \varphi^{-(\ell+n+i)}(X^s(\varphi^{\ell}(w'),\varepsilon_X)).
\end{equation}
Similarly, we have that 
\begin{equation}\label{eq:KK_1_lift_ranks_4}
y\in \varphi^{-(i-n-\ell)}(X^u(\varphi^{-\ell}(w),\varepsilon_X)).
\end{equation}
The intersection of the sets in (\ref{eq:KK_1_lift_ranks_3}) and (\ref{eq:KK_1_lift_ranks_4}) is finite since the first is a compact segment of the global stable set $X^s(\varphi^{-(n+i)}(w'))$, and the second is a compact segment of the global unstable set $X^u(\varphi^{-(i-n)}(w)).$ As a result, we obtain that the operator $S_n$ has finite rank.

We now prove part (i). From (\ref{eq:KK_1_lift_ranks_1}) and again using the proof of \cite[Lemma 6.3]{KPW} we can find $n_0\in \mathbb N$ so that $\rank(\alpha_u^{-n}(b)\alpha_s^n(a)u^{i})=0$, if $n\leq - n_0$. We aim to choose a (possibly) larger $n_0$ so that we also have $\rank(S_n)=0$, if $n\leq - n_0$. First note that since $P\cap Q=\varnothing$, there is some $\varepsilon_0>0$ so that $B(Q,\varepsilon_0)\cap B(P,\varepsilon_0)=\varnothing$. Also, note that $w\in X^u(Q),\, w'\in X^s(P)$, and for every $m\in \{\gamma_{|2n+k|},\ldots, 2\gamma_{|2n+k|}\}$ and $|r|\leq m$ it holds that \[
\text{diam}(\varphi^r(\mathcal{R}_{2m}^{\delta}))\leq \lambda^{1-\gamma_{|2n+k|}}\varepsilon_X,\]
see Remark \ref{rem:averaging_Markov_par} and Theorem \ref{thm:theoremgraphSmalespaces}. It suffices to consider $n_0\in \mathbb N$ large enough so that, if $n\leq -n_0$, we also have that 
\begin{enumerate}[(i)]
\item $n+i<0, \, i-n >0$;
\item $\varphi^{-(n+i)}(w')\in B(P,\varepsilon_0/4),\, \lambda^{n+i}\eta' <\varepsilon_0/4$;
\item $\varphi^{-(i-n)}(w)\in B(Q,\varepsilon_0/4),\, \lambda^{-(i-n)}\eta <\varepsilon_0/4$;
\item  $\lambda^{1-\gamma_{|2n+k|}}\varepsilon_X<\varepsilon_0/2$.
\end{enumerate}

Suppose that $n\leq -n_0$ and assume to the contrary that there is $y\in X^h(P,Q)$ with $S_n(\delta_y)\neq 0$. Following the proof so far, we can find $m\in \{\gamma_{|2n+k|},\ldots, 2\gamma_{|2n+k|}\},\, |r|\leq m$ and $j\in \{1,\ldots, \# \mathcal{R}_{2m}^\delta\}$ so that $f_{2m,j}(\varphi^{-r}(y))\neq 0$ and (\ref{eq:KK_1_lift_ranks_2}) holds. We have that
\begin{align*}
[y,\varphi^r(g_{2m,j})] &\in X^s(\varphi^{-(n+i)}(w'),\lambda^{n+i}\eta'),\\
[\varphi^r(g_{2m,j}),y] &\in X^u(\varphi^{-(i-n)}(w),\lambda^{-(i-n)}\eta).
\end{align*}
As a result, we get that $[y,\varphi^r(g_{2m,j})]\in B(P,\varepsilon_0/2)$ and $[\varphi^r(g_{2m,j}),y]\in B(Q,\varepsilon_0/2).$ Also, since the coefficient function $f_{2m,j}\circ \varphi^{-r}$ is non-zero exactly on the rectangle $\varphi^r(R_{2m,j}^{\delta})$, we have that $y,\, [y,\varphi^r(g_{2m,j})],\, [\varphi^r(g_{2m,j}),y]\in \varphi^r(R_{2m,j}^{\delta})$ and hence their mutual distances are less than $\varepsilon_0/2$. Therefore, we obtain that $y\in B(Q,\varepsilon_0)\cap B(P,\varepsilon_0)$, which is a contradiction.

To prove part (ii), let $\varepsilon >0$. Again, from (\ref{eq:KK_1_lift_ranks_1}) and this time \cite[Lemma 7.6]{Gero2}, there is some $n_1\in \mathbb N$ such that, if $n\geq n_1$, then we have that 
\[
\rank(\alpha_u^{-n}(b)\alpha_s^n(a)u^{i})< e^{2(\ent(\varphi)+\varepsilon)n}.
\]
We now prove that a similar statement holds for $S_n$, given that $n\in \mathbb N$ is sufficiently large. First, consider a (possibly) larger $n_1$ so that $n_1\geq |i|$ and assume that $n\geq n_1$. If $y\in X^h(P,Q)$ and $S_n(\delta_y)\neq 0$, similarly as before, we can find $m\in \{\gamma_{|2n+k|},\ldots, 2\gamma_{|2n+k|}\},\, |r|\leq m$ and $j\in \{1,\ldots, \# \mathcal{R}_{2m}^{\delta}\}$ so that $f_{2m,j}(\varphi^{-r}(y))\neq 0$ and (\ref{eq:KK_1_lift_ranks_2}) holds. However, this time we have that 
\begin{align*}
\varphi^{n+i}(y)&\in X^s(\varphi^{n+i}[y,\varphi^r(g_{2m,j})],\lambda^{-(n+i)}\varepsilon_X/2),\\
\varphi^{i-n}(y)&\in X^u(\varphi^{i-n}[\varphi^r(g_{2m,j}),y],\lambda^{i-n}\varepsilon_X/2),
\end{align*}
and hence we have 
\[
y\in \varphi^{-(n+i)}(X^s(w',\varepsilon_X))\cap \varphi^{n-i}(X^u(w,\varepsilon_X)).
\]
Working as in \cite[Lemma 7.6]{Gero2}, we can choose $n_1$ even larger so that, if $n\geq n_1$, it holds
\begin{align*}
\rank(S_n)&\leq \# \varphi^{-(n+i)}(X^s(w',\varepsilon_X))\cap \varphi^{n-i}(X^u(w,\varepsilon_X))\\ 
&=\# X^s(w',\varepsilon_X)\cap \varphi^{2n}(X^u(w,\varepsilon_X))\\
&< e^{2(\ent(\varphi)+\varepsilon)n}.
\end{align*}
This completes the proof of the lemma.
\end{proof}

For the next \enquote{orthogonality} lemma, recall from the proof of Proposition \ref{prop:aperiodic_set} that $\mathcal{G}=\{g_{n,k}\in R_{n,k}^{\delta}:n\geq 0, \, 1\leq k\leq \# \mathcal{R}_n^\delta\}$ is an aperiodic $\mathcal{R}^{\delta}$-sample, see Definition \ref{def:aperiodic_sampling}. Also, note that the diameters in a $\delta$-enlarged Markov partition $\mathcal{R}_1^{\delta}$ are assumed to be less than some $\varepsilon ' < \varepsilon_X'$. 

\begin{lemma}\label{lem:KK_1_lift_orthogonality}
Let $a\in C_c(G^s(Q)),\, b\in C_c(G^u(P))$ and $i\in \mathbb Z$. Also, let $m,t\in \mathbb N$ and $r,s\in \mathbb Z$ with $|r+i|\leq m-1,\, |s|\leq t-1$ and $(t,s)\neq (m,r+i)$. Then, there is $n_2=n_2(a,b)\in \mathbb N$ so that, for all $n\geq n_2$, it holds $$\iota_{t,s}^*(\alpha_u^{-n}(b)u^i\otimes \alpha_s^n(a)u^i)\iota_{m,r}=0.$$
\end{lemma}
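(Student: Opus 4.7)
The plan is to reduce, via the aperiodicity of $\mathcal{G}$, to the orthogonality argument of Lemma~\ref{lem:orthogonality}. By the covariance identity $(u\otimes u)^{i}\iota_{m,r} = \iota_{m,r+i}u^{i}$, one rewrites $(\alpha_{u}^{-n}(b)u^{i}\otimes \alpha_{s}^{n}(a)u^{i})\iota_{m,r} = (\alpha_{u}^{-n}(b)\otimes \alpha_{s}^{n}(a))\iota_{m,r+i}u^{i}$, so setting $r':=r+i$ (so $|r'|\leq m-1$ and $(t,s)\neq(m,r')$), it suffices to show $\iota_{t,s}^{*}(\alpha_{u}^{-n}(b)\otimes \alpha_{s}^{n}(a))\iota_{m,r'} = 0$ for large $n$. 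By linearity (and a partition of unity refinement to shrink $\eta,\eta'$ where needed), I may assume $\supp(a)\subset V^{s}(v,w,h^{s},\eta,N)$ and $\supp(b)\subset V^{u}(v',w',h^{u},\eta',N')$.

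Next I derive explicit bracket formulas. Suppose the operator applied to $\delta_{y}$ is nonzero. Then some $k$ satisfies $\varphi^{-r'}(y)\in R_{m,k}^{\delta}$, $\varphi^{n}[y,\varphi^{r'}(g_{m,k})]\in X^{s}(w',\eta')$, and $\varphi^{-n}[\varphi^{r'}(g_{m,k}),y]\in X^{u}(w,\eta)$. Identifying the intermediate points with local brackets (for instance $\varphi^{n}[y,\varphi^{r'}(g_{m,k})] = [w',\varphi^{n+r'}(g_{m,k})]$), and using $h^{u}([w',c])=[v',c]$, $h^{s}([c,w])=[c,v]$, iterated application of bracket axiom~B4 gives
\[x_{1}(k) = [\varphi^{-n}(v'),\varphi^{r'}(g_{m,k})], \qquad z_{1}(k) = [\varphi^{r'}(g_{m,k}),\varphi^{n}(v)].\]
For $\iota_{t,s}^{*}(\delta_{x_{1}(k)}\otimes \delta_{z_{1}(k)})\neq 0$, there must exist $k'$ with $\varphi^{-s}(x_{1}(k))\in X^{u}(g_{t,k'},R_{t,k'}^{\delta})$ and $\varphi^{-s}(z_{1}(k))\in X^{s}(g_{t,k'},R_{t,k'}^{\delta})$.

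Then I invoke the local product structure. The formula $\varphi^{-s}(x_{1}(k)) = [\varphi^{-n-s}(v'),\varphi^{r'-s}(g_{m,k})]$ places this point in the local unstable manifold of both $\varphi^{r'-s}(g_{m,k})$ and $g_{t,k'}$; likewise $\varphi^{-s}(z_{1}(k)) = [\varphi^{r'-s}(g_{m,k}),\varphi^{n-s}(v)]$ places both $\varphi^{r'-s}(g_{m,k})$ and $g_{t,k'}$ in a common local stable manifold. Since the local intersection of a local stable and a local unstable manifold is a single point (bracket uniqueness), $g_{t,k'} = \varphi^{r'-s}(g_{m,k})$. By Proposition~\ref{prop:aperiodic_set}, $\varphi^{j}(\mathcal{G})\cap\mathcal{G} = \varnothing$ for $j\neq 0$, forcing $r'=s$ and then $g_{t,k'}=g_{m,k}$, so $(t,k')=(m,k)$ by injectivity of the sampling; this gives $(t,s)=(m,r+i)$, contradicting the hypothesis.

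The main obstacle is the metric bookkeeping needed for bracket uniqueness in the third step. The bound $\diam(R_{t,k'}^{\delta})\leq \lambda^{-t+1}\theta$ from Theorem~\ref{thm:theoremgraphSmalespaces} and a small initial rectangle diameter $\theta$ control $d(\varphi^{-s}(x_{1}(k)),\varphi^{-s}(z_{1}(k)))$ below $\varepsilon_{X}'$; while small $\eta,\eta'$ control how close the brackets defining $x_{1}(k),z_{1}(k)$ stay to $\varphi^{r'}(g_{m,k})$. The constant $n_{2}=n_{2}(a,b)$ is chosen so that, for $n\geq n_{2}$, all relevant brackets lie in this uniqueness regime; if they are undefined the operator vanishes trivially, so $n\geq n_{2}$ only isolates the nontrivial case.
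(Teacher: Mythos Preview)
Your overall strategy matches the paper's: trace $\delta_y$ through $\iota_{m,r}$, then $\alpha_u^{-n}(b)\otimes\alpha_s^n(a)$ (absorbing $u^i$ via covariance), then $\iota_{t,s}^*$, and derive a contradiction from the aperiodicity of $\mathcal{G}$. The covariance reduction $(u\otimes u)^i\iota_{m,r}=\iota_{m,r+i}u^i$ at the start is correct and a clean simplification.

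The gap is in your derivation of the explicit formulas $x_1(k)=[\varphi^{-n}(v'),\varphi^{r'}(g_{m,k})]$ and $z_1(k)=[\varphi^{r'}(g_{m,k}),\varphi^n(v)]$. The intermediate identification $\varphi^n[y,\varphi^{r'}(g_{m,k})]=[w',\varphi^{n+r'}(g_{m,k})]$ is generally false: since $[y,\varphi^{r'}(g_{m,k})]$ lies on the local unstable set of $\varphi^{r'}(g_{m,k})$ and unstable directions \emph{expand} under $\varphi^n$, the point $\varphi^n[y,\varphi^{r'}(g_{m,k})]$ is typically far from $\varphi^{n+r'}(g_{m,k})$, so the bracket on the right need not even be defined. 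Likewise, the identities $h^u([w',c])=[v',c]$ and $h^s([c,w])=[c,v]$ presuppose $d(v',w')\leq\varepsilon_X$ and $d(v,w)\leq\varepsilon_X$, which are not assumed; the pairs $(v,w)$ and $(v',w')$ are only globally stably/unstably equivalent and may be far apart.

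The paper sidesteps this by never writing explicit bracket formulas for the output points. Instead it uses the displacement estimate: for $n\geq N'$ one has $\varphi^{-n}\circ h^u\circ\varphi^n(p)\in X^u(p,\lambda^{-n+N'}\varepsilon_X/2)$, and similarly on the stable side. Choosing $n_2$ so that these displacements are at most $\varepsilon_X-2\varepsilon'$, one chains the containments
\[
[\varphi^i(y),\varphi^{r+i}(g_{m,j})]\in X^u(\varphi^{r+i}(g_{m,j}),\varepsilon'),\quad
\varphi^{-n}\circ h^u\circ\varphi^n[\varphi^i(y),\varphi^{r+i}(g_{m,j})]\in X^u(\varphi^s(g_{t,k}),\varepsilon')
\]
(the latter from the $\iota_{t,s}^*$ condition together with $|s|\leq t-1$, which bounds $\diam\varphi^s(R_{t,k}^\delta)<\varepsilon'$) to obtain $\varphi^{r+i}(g_{m,j})\in X^s(\varphi^s(g_{t,k}),\varepsilon_X)$ directly, and symmetrically for the unstable side. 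This also avoids your step of pushing $\varphi^{-s}$ through a bracket via B4, which would need its own radius bookkeeping. Your final formulas for $x_1(k),z_1(k)$ are in fact correct for large $n$, but the valid justification is precisely this displacement argument, not the route you sketched.
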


\begin{proof}
Assume that $\supp(a)\subset V^s(v,w,h^s,\eta,N)$ and $\supp(b)\subset V^u(v',w',h^u,\eta',N')$. First, we should note that if $n\geq N,N'$ then, for every $x\in \varphi^n(X^u(w,\eta))$ we have 
\[
\varphi^n\circ h^s\circ \varphi^{-n}(x)\in X^s(x,\lambda^{-n+N}\varepsilon_X/2),\]
and similarly, for every $x\in \varphi^{-n}(X^s(w',\eta'))$ we have
\[
\varphi^{-n}\circ h^u\circ \varphi^n(x)\in X^u(x,\lambda^{-n+N'}\varepsilon_X/2).
\]
We let $n_2\in \mathbb N$ to be larger than $N,N'$ so that $\lambda^{-n_2+N}\varepsilon_X/2,\, \lambda^{-n_2+N'}\varepsilon_X/2\leq \varepsilon_X-2\varepsilon'$.

Now suppose $n\geq n_2$ and assume to the contrary that there is $y\in X^h(P,Q)$ such that 
\begin{equation}\label{eq:KK_1_lift_orthogonality_1}
\iota_{t,s}^*(\alpha_u^{-n}(b)u^i\otimes \alpha_s^n(a)u^i)\iota_{m,r}(\delta_y)\neq 0.
\end{equation}
Since $|r+i|\leq m-1$, we have
\[
(u^i\otimes u^i)\iota_{m,r}(\delta_y)=\sum_{j=1}^{\# \mathcal{R}_{m}^\delta}f_{m,j}(\varphi^{-r}(y))\delta_{[\varphi^i(y),\varphi^{r+i}(g_{m,j})]}\otimes \delta_{[\varphi^{r+i}(g_{m,j}),\varphi^i(y)]}.
\]
Then, from (\ref{eq:KK_1_lift_orthogonality_1}) there is some $j\in \{1,\ldots, \#\mathcal{R}_m^\delta\}$ so that 
\begin{equation}\label{eq:KK_1_lift_orthogonality_2}
f_{m,j}(\varphi^{-r}(y))\iota_{t,s}^*(\alpha_u^{-n}(b)\delta_{[\varphi^i(y),\varphi^{r+i}(g_{m,j})]}\otimes \alpha_s^n(a)\delta_{[\varphi^{r+i}(g_{m,j}),\varphi^i(y)]})\neq 0.
\end{equation}
In particular, $\varphi^{-n}\circ h^u\circ \varphi^n[\varphi^i(y),\varphi^{r+i}(g_{m,j})]$ and $\varphi^n\circ h^s\circ \varphi^{-n}[\varphi^{r+i}(g_{m,j}),\varphi^i(y)]$ are well-defined, and there is a (unique) $k\in \{1,\ldots, \#\mathcal{R}_t^\delta\}$ so that  
\begin{align*}
\varphi^{-s-n}\circ h^u\circ \varphi^n[\varphi^i(y),\varphi^{r+i}(g_{m,j})]&\in X^u(g_{t,k},R_{t,k}^{\delta}),\\
\varphi^{n-s}\circ h^s\circ \varphi^{-n}[\varphi^{r+i}(g_{m,j}),\varphi^i(y)]&\in X^s(g_{t,k},R_{t,k}^{\delta}).
\end{align*}
Since $|s|\leq t-1$, it follows that 
\begin{align*}
\varphi^{-n}\circ h^u\circ \varphi^n[\varphi^i(y),\varphi^{r+i}(g_{m,j})]&\in X^u(\varphi^s(g_{t,k}),\varepsilon'),\\
\varphi^{n}\circ h^s\circ \varphi^{-n}[\varphi^{r+i}(g_{m,j}),\varphi^i(y)]&\in X^s(\varphi^s(g_{t,k}),\varepsilon').
\end{align*}
Moreover, 
\begin{align*}
\varphi^{-n}\circ h^u\circ \varphi^n[\varphi^i(y),\varphi^{r+i}(g_{m,j})]&\in X^u([\varphi^i(y),\varphi^{r+i}(g_{m,j})],\varepsilon_X-2\varepsilon'),\\
\varphi^{n}\circ h^s\circ \varphi^{-n}[\varphi^{r+i}(g_{m,j}),\varphi^i(y)]&\in X^s([\varphi^{r+i}(g_{m,j}),\varphi^i(y)],\varepsilon_X-2\varepsilon'),
\end{align*}
and since $\varphi^i(y),\varphi^{r+i}(g_{m,j})\in \varphi^{r+i}(R_{m,j}^{\delta})$ and $\varphi^{r+i}(R_{m,j}^{\delta})$ is a rectangle with diameter less than $\varepsilon'$, we also have that 
\begin{align*}
[\varphi^i(y),\varphi^{r+i}(g_{m,j})]&\in X^u(\varphi^{r+i}(g_{m,j}),\varepsilon'),\\
[\varphi^{r+i}(g_{m,j}),\varphi^i(y)]&\in X^s(\varphi^{r+i}(g_{m,j}),\varepsilon').
\end{align*}

Hence, we have $\varphi^{r+i}(g_{m,j})\in X^s(\varphi^s(g_{t,k}),\varepsilon_X)\cap X^u(\varphi^s(g_{t,k}),\varepsilon_X)=\{\varphi^s(g_{t,k})\}.$ If $s\neq r+i$ then $\varphi^{r+i-s}(g_{m,j})=g_{t,k}$, which is a contradiction since $\mathcal{G}$ is aperiodic. Finally, if $s=r+i$ then $g_{m,j}=g_{t,k}$, and from aperiodicity it holds that $m=t$. However, this is again a contradiction since $(t,s)\neq (m,r+i)$.
\end{proof}

For the next lemma, the fact that the metric $d$ is self-similar plays an important role. Further, recall the sequence $\gamma_{|n|}=\lceil |n|/16 \rceil$ used to define $V_n$ in \eqref{eq:averaging13}.

\begin{lemma}\label{lem:KK_1_lift_convergence}
Let $a\in \Lip_c(G^s(Q),D_{s,d}),\, b\in \Lip_c(G^u(P),D_{u,d})$ and $i,k\in \mathbb Z$. Then, there is a constant $C_2>0$ and some $n_3\in \mathbb N$ so that $2n_3+k>0$ and $\gamma_{2n_3+k}\geq |i|+1$ and, for all $n\geq n_3$, if $m\in \{\gamma_{2n+k},\ldots, 2\gamma_{2n+k}\},\, |r|\leq m$, it holds that $$\|\iota_{2m,r+i}^*(\alpha_u^{-n}(b)u^i\otimes \alpha_s^n(a)u^i)\iota_{2m,r}-\alpha_u^{-n}(b)\alpha_s^n(a)u^i\|\leq C_2(\lambda^{-n/8}+2^{-n/(8\lceil \log_{\lambda}3 \rceil)}).$$
\end{lemma}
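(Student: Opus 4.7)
The plan is to apply both operators to a basis vector $\delta_y \in \mathscr{H}$, expand them using the explicit formulas (\ref{eq:averaging5})--(\ref{eq:averaging6}) together with the groupoid action (\ref{eq:groupoid_autom}), identify the principal term via the bracket calculus, and bound the residual using the $1/2$-Hölder estimates of Remark \ref{rem:holdercoef} combined with hyperbolic contraction. Assume $\supp(a)\subset V^s(v,w,h^s,\eta,N)$ and $\supp(b)\subset V^u(v',w',h^u,\eta',N')$, and choose $n_3\ge \max(N,N')$ so that, in addition to the stated $2n_3+k>0$ and $\gamma_{2n_3+k}\ge |i|+1$, all displacements appearing below are small enough for the relevant brackets to be defined. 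The condition $\gamma_{2n_3+k}\ge |i|+1$ forces $|r+i|\le m+|i|\le 2m-1$, which is exactly the range in which the clean formulas for $\iota_{2m,r+i}$ apply.

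For such $y$, I would first rewrite $(\alpha_u^{-n}(b)u^i\otimes \alpha_s^n(a)u^i)\iota_{2m,r}(\delta_y)$ as
\[
\sum_j f_{2m,j}(\varphi^{-r}(y))\, c_j(a,b)\, \delta_{\tilde p_j}\otimes\delta_{\tilde q_j},
\]
where $p_j=[y,\varphi^r(g_{2m,j})]$, $q_j=[\varphi^r(g_{2m,j}),y]$, $\tilde p_j=\varphi^{-n} h^u \varphi^{n+i}(p_j)$, $\tilde q_j=\varphi^{n} h^s \varphi^{-n+i}(q_j)$, and $c_j(a,b)$ is the product of the corresponding values of $a,b$ along the graphs of the holonomies. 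Applying $\iota_{2m,r+i}^*$ via (\ref{eq:averaging6}) and using the bracket axioms B2--B4 together with $\varphi$-equivariance of the bracket, one verifies that $[\tilde p_j,\tilde q_j]$ equals the point produced by the direct action $\alpha_u^{-n}(b)\alpha_s^n(a)u^i \delta_y$, and that the index $j'$ selected by (\ref{eq:averaging6}) must coincide with $j$ by the same aperiodicity-plus-refinement argument as in Lemma \ref{lem:KK_1_lift_orthogonality}. The residual difference then reduces to $|f_{2m,j}(\varphi^{-(r+i)}[\tilde p_j,\tilde q_j])-f_{2m,j}(\varphi^{-r}(y))|$, plus a similar coefficient error for $a,b$. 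By Proposition \ref{prop:Lip_pou_con} and self-similarity, $f_{2m,j}\circ\varphi^{-(r+i)}$ is $1/2$-Hölder with constant $\lesssim\lambda^{m+|r+i|/2}$; since $m\le 2\gamma_{2n+k}\lesssim (n+k/2)/8$ and $|r+i|\le 2m$, and the displacement between the two arguments is $O(\lambda^{-n})$ by C1/C2 applied to the $\varphi^{\pm n}$ buried in $\tilde p_j,\tilde q_j$, the product collapses to $\lesssim\lambda^{-n/8}$. The partition-of-unity identity $\sum_j f_{2m,j}(\varphi^{-r}(y))=1$ then recovers the direct operator, and the summation (of at most $\mul(\mathcal{R}_{2m}^\delta)\le (\#\mathcal{R}_1^\delta)^2$ non-zero terms) is absorbed into the constant $C_2$.

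The main obstacle will be the bracket-holonomy-dynamics matching. Although the symmetric construction of $\iota_{2m,r}$ is designed to make the reassembly $[\tilde p_j,\tilde q_j]\mapsto \text{direct image}$ natural, writing it out cleanly requires repeated use of B2--B4 to interchange bracket and holonomy, and of B4 to interchange bracket and $\varphi^{\pm n}$; it is at this step that the coefficient discrepancy coming from $a\in\Lip_c(G^s(Q),D_{s,d})$ and $b\in\Lip_c(G^u(P),D_{u,d})$ enters. Bounding that second contribution through the groupoid metrics $D_{s,d},D_{u,d}$ of Proposition \ref{prop:Lip_alg_Smale_groupoids} and redistributing a factor $\lambda^n$ across $\lceil\log_\lambda 3\rceil$ contraction steps to convert base-$\lambda$ decay into base-$3$ (hence base-$2$) decay is what produces the second summand $2^{-n/(8\lceil\log_\lambda 3\rceil)}$ in the asserted bound, with the exponent $1/(8\lceil\log_\lambda 3\rceil)$ matching the $1/8$ slow-down built into $\gamma_{2n+k}=\lceil (2n+k)/16\rceil$.
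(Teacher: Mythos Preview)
Your outline follows the paper's proof essentially step for step: evaluate on basis vectors, use the bracket--holonomy identities to show that the reassembled point $[\tilde p_j,\tilde q_j]$ agrees with the image under the direct operator, then split the discrepancy into a partition-of-unity coefficient error (controlled via Proposition~\ref{prop:Lip_pou_con}, self-similarity, and the $\gamma$-slowdown, giving the $\lambda^{-n/8}$ term) and an $a,b$-value error (controlled via the Lipschitz hypothesis in the groupoid metrics $D_{s,d},D_{u,d}$, giving the $2^{-n/(8\lceil\log_\lambda 3\rceil)}$ term).

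Two small points to tighten. First, the identity you invoke is $\sum_j f_{2m,j}^2(\varphi^{-r}(y))=1$, not $\sum_j f_{2m,j}=1$; the product $f_{2m,j}(\varphi^{-r}(y))f_{2m,j}(\varphi^{-(r+i)}[\tilde p_j,\tilde q_j])$ appears after applying $\iota_{2m,r+i}^*$, and the paper's splitting writes the second factor as the first plus a difference before summing the squares. Second, you should argue both directions of nondegeneracy: if $\alpha_u^{-n}(b)\alpha_s^n(a)u^i\delta_y\neq 0$ then all the $\tilde p_j,\tilde q_j$ are defined and lie in the right source sets, \emph{and} conversely if $(\alpha_u^{-n}(b)u^i\otimes\alpha_s^n(a)u^i)\iota_{2m,r}(\delta_y)\neq 0$ then the direct expression is defined; otherwise the norm estimate could miss basis vectors on which exactly one side vanishes.
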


\begin{proof}
Assume that $\supp(a)\subset V^s(v,w,h^s,\eta,N)$ and $\supp(b)\subset V^u(v',w',h^u,\eta',N')$. First, choose $n_3\in \mathbb N$ so that $2n_3+k>0$ and $\gamma_{2n_3+k}\geq |i|+1$. Since $(\gamma_{n})_{n\in \mathbb N}$ is increasing we have that $\gamma_{2n+k}\geq |i|+1$, for all $n\geq n_3.$ Therefore, for all $m,r$ as in the statement it holds that $|r+i|\leq 2m-1$. Along the proof, the integer $n_3$ will be chosen sufficiently large for our arguments to work. Also, the constant $C_2$ will be described at the end of the proof. Moreover, for a large portion of the proof, the fact that we consider $m\in \mathbb N$ that increases with respect to $n\in \mathbb N$; meaning $m\geq \gamma_{2n+k}$, is not important, and what one should keep in mind is that $m,r$ satisfy $|r+i|\leq 2m-1$. 

Let $n\geq n_3$ and $m,r$ as in the statement. First note that if $\alpha_u^{-n}(b)\alpha_s^n(a)u^i\neq 0$, then $X^s(\varphi^{-n}(w'))\cap X^u (\varphi^n(v))\neq \varnothing$ and hence $\varphi^{-n}(w'),\,\varphi^n(v)$ must lie in the same mixing component. Therefore, in this case, all points $\varphi^{-n}(w'),\, \varphi^{-n}(v'),\, \varphi^n(w),\, \varphi^n(v)$ must lie in the same mixing component. Similarly, if $(\alpha_u^{-n}(b)u^i\otimes \alpha_s^n(a)u^i)\iota_{2m,r}\neq 0$, then from (\ref{eq:KK_1_lift_ranks_3}) and (\ref{eq:KK_1_lift_ranks_4}) of Lemma \ref{lem:KK_1_lift_ranks} we see that $X^s(\varphi^{-n}(w'))\cap X^u (\varphi^n(w))\neq \varnothing$, and hence $\varphi^{-n}(w'),\, \varphi^{-n}(v'),\, \varphi^n(w),\, \varphi^n(v)$ are in the same mixing component. As a result, the only possibility for the operators $\iota_{2m,r+i}^*(\alpha_u^{-n}(b)u^i\otimes \alpha_s^n(a)u^i)\iota_{2m,r}$ and $\alpha_u^{-n}(b)\alpha_s^n(a)u^i$ to be non-zero is when $n$ takes values in a certain strictly increasing arithmetic-like sequence. Therefore, without loss of generality, we can assume that $(X,\varphi)$ is mixing. 

Recall that $\text{source}(a)$ and $\text{source}(b)$ denote the images of $\supp(a),\, \supp(b)$ via the groupoid source map. Both $\text{source}(a)$ and $\text{source}(b)$ are compact subsets of $X^u(w,\eta)$ and $X^s(w',\eta')$, respectively. From compactness, we can find an $\varepsilon>0$ such that the $\varepsilon$-neighbourhoods $N_{\varepsilon}(\text{source}(a))\subset X^u(w,\eta)$ and $N_{\varepsilon}(\text{source}(b))\subset X^s(w',\eta')$. Let us now choose $n_3\in \mathbb N$ to be sufficiently large so that $$\lambda^{i-n_3}\varepsilon_X,\, \lambda^{-i-n_3}\varepsilon_X,\, \lambda^{-n_3+N}\varepsilon_X <\varepsilon.$$ In particular, this also means that $n_3\geq N,\, |i|.$ Consider $y\in X^h(P,Q)$ and $n\geq n_3$ with $\alpha_u^{-n}(b)\alpha_s^n(a)u^i \delta_y \neq 0.$ Then $$\varphi^{-n+i}(y)\in \text{source}(a), \quad \varphi^{2n}\circ h^s \circ \varphi^{-n+i}(y)\in \text{source}(b),$$ and $\alpha_u^{-n}(b)\alpha_s^n(a)u^i\delta_y$ equals
$$b(h^u\circ \varphi^{2n}\circ h^s \circ \varphi^{-n+i}(y), \varphi^{2n}\circ h^s \circ \varphi^{-n+i}(y)) a( h^s \circ \varphi^{-n+i}(y), \varphi^{-n+i}(y)) \delta_{\varphi^{-n}\circ h^u\circ \varphi^{2n}\circ h^s \circ \varphi^{-n+i}(y)}.$$ The goal is to find a similar description for $(\alpha_u^{-n}(b) u^{i} \otimes \alpha_s^n(a) u^{i})\iota_{2m,r}(\delta_y)$ which is written as 
\[
\sum_{j=1}^{\# \mathcal{R}_{2m}^{\delta}}f_{2m,j}(\varphi^{-r}(y))\alpha_u^{-n}(b) \delta_{\varphi^i[y,\varphi^r(g_{2m,j})]}\otimes \alpha_s^n(a) \delta_{\varphi^i[\varphi^r(g_{2m,j}),y]}.
\]
Indeed, for every $j\in \{1,\ldots, \# \mathcal{R}_{2m}^\delta\}$ with $f_{2m,j}(\varphi^{-r}(y))\neq 0$ we have that 
\[
\varphi^{-n+i}[\varphi^r(g_{2m,j}),y]\in X^u(\varphi^{-n+i}(y),\lambda^{-n+i}\varepsilon_X/2)
\subset  N_{\varepsilon/2}(\text{source}(a))
\subset X^u(w,\eta).
\]
As a result, $\alpha_s^n(a) \delta_{\varphi^i[\varphi^r(g_{2m,j}),y]}$ equals
$$ a(h^s\circ \varphi^{-n+i}[\varphi^r(g_{2m,j}),y],\varphi^{-n+i}[\varphi^r(g_{2m,j}),y])\delta_{\varphi^n\circ h^s\circ \varphi^{-n+i}[\varphi^r(g_{2m,j}),y]}.$$
Moreover, for such $j$ it holds that
\begin{align*}
\varphi^{n+i}[y,\varphi^r(g_{2m,j})]&\in X^s(\varphi^{n+i}(y),\lambda^{-(n+i)}\varepsilon_X/2)\\
&\subset X^s(\varphi^{2n}\circ h^s \circ \varphi^{-n+i}(y),\lambda^{-(n+i)}\varepsilon_X/2+\lambda^{-2n+N}\varepsilon_X/2)\\
&\subset  N_{\varepsilon}(\text{source}(b)) \subset X^u(w',\eta').
\end{align*}
Therefore,
\[
\alpha_u^{-n}(b) \delta_{\varphi^i[y,\varphi^r(g_{2m,j})]}=b(h^u\circ \varphi^{n+i}[y,\varphi^r(g_{2m,j})],\varphi^{n+i}[y,\varphi^r(g_{2m,j})])\delta_{\varphi^{-n}\circ h^u\circ \varphi^{n+i}[y,\varphi^r(g_{2m,j})]}.
\]

Conversely, let $y\in X^h(P,Q)$ and $n\geq n_3$. If $(\alpha_u^{-n}(b) u^{i} \otimes \alpha_s^n(a) u^{i})\iota_{2m,r}(\delta_y)\neq 0$, then there is some $j_0\in \{1,\ldots, \# \mathcal{R}_{2m}^\delta\}$ so that $$f_{2m,j_0}(\varphi^{-r}(y))\alpha_u^{-n}(b) \delta_{\varphi^i[y,\varphi^r(g_{2m,j_0})]}\otimes \alpha_s^n(a) \delta_{\varphi^i[\varphi^r(g_{2m,j_0}),y]}\neq 0.$$ Specifically, the expressions $\varphi^i[y,\varphi^r(g_{2m,j_0})],\, \varphi^i[\varphi^r(g_{2m,j_0}),y]$ are well-defined. Also, $$\varphi^{-n+i}[\varphi^r(g_{2m,j_0}),y]\in \text{source}(a) \quad \text{ and } \quad \varphi^{n+i}[y,\varphi^r(g_{2m,j_0})]\in \text{source}(b).$$ Working as before, we obtain that $\varphi^{-n+i}(y)\in N_{\varepsilon/2}(\text{source}(a))\subset X^u(w,\eta)$ and also that $\varphi^{n+i}(y)\in N_{\varepsilon/2}(\text{source}(b)) \subset X^u(w',\eta')$. Therefore, the point $\varphi^{2n}\circ h^s \circ \varphi^{-n+i}(y)$ is well-defined and lies so close to $\varphi^{n+i}(y)$ (in the stable direction) so that it is actually in $N_{\varepsilon}(\text{source}(b)) \subset X^u(w',\eta')$. As a result, $\alpha_u^{-n}(b)\alpha_s^n(a)u^i \delta_y$ is given by $$b(h^u\circ \varphi^{2n}\circ h^s \circ \varphi^{-n+i}(y), \varphi^{2n}\circ h^s \circ \varphi^{-n+i}(y)) a( h^s \circ \varphi^{-n+i}(y), \varphi^{-n+i}(y)) \delta_{\varphi^{-n}\circ h^u\circ \varphi^{2n}\circ h^s \circ \varphi^{-n+i}(y)}.$$ Moreover, for every $j\in \{1,\ldots, \# \mathcal{R}_{2m}^\delta\}$ with $f_{2m,j}(\varphi^{-r}(y))\neq 0$, by comparing with $\varphi^{-n+i}(y)$, $\varphi^{n+i}(y)$, we have that 
\[
\varphi^{-n+i}[\varphi^r(g_{2m,j}),y]\in N_{\varepsilon}(\text{source}(a)) \quad \text{ and } \quad 
\varphi^{n+i}[y,\varphi^r(g_{2m,j})]\in N_{\varepsilon}(\text{source}(b)),
\]
and hence lie in $X^u(w,\eta)$ and $X^u(w',\eta')$, respectively.

To summarise, let $n\geq n_3$ and consider $y\in X^h(P,Q)$ so that $\alpha_u^{-n}(b)\alpha_s^n(a)u^i \delta_y \neq 0$ or $(\alpha_u^{-n}(b) u^{i} \otimes \alpha_s^n(a) u^{i})\iota_{2m,r}(\delta_y)\neq 0$. Then,  $\varphi^{-n}\circ h^u\circ \varphi^{2n}\circ h^s \circ \varphi^{-n+i}(y)$ is well-defined. Similarly, for $\varphi^n\circ h^s\circ \varphi^{-n+i}[\varphi^r(g_{2m,j}),y]$ and $\varphi^{-n}\circ h^u\circ \varphi^{n+i}[y,\varphi^r(g_{2m,j})]$, for all $j\in \{1,\ldots, \# \mathcal{R}_{2m}^\delta\}$ with $f_{2m,j}(\varphi^{-r}(y))\neq 0$. Our goal now is to evaluate the operator $\iota_{2m,r+i}^*(\alpha_u^{-n}(b)u^i\otimes \alpha_s^n(a)u^i)\iota_{2m,r}$ on $\delta_y$.

At this point we shall consider a (possibly) larger $n_3$ so that $n_3\geq N'$. Then for every $y$ as above we have that
\begin{equation}\label{eq:KK_1_lift_convergence_1}
d(\varphi^{-n}\circ h^u\circ \varphi^{2n}\circ h^s \circ \varphi^{-n+i}(y), \varphi^i(y))\leq K_0\lambda^{-n},
\end{equation}
where $K_0=
\lambda^{N'}\varepsilon_X/2+\lambda^{N}\varepsilon_X/2.$ Moreover, from \cite[Lemma 2.2]{Putnam_algebras}, for a (possibly) larger $n_3$ we can guarantee that 
\begin{equation}\label{eq:KK_1_lift_convergence_2}
\varphi^n\circ h^s\circ \varphi^{-2n}\circ h^u \circ \varphi^n (\varphi^i(y))= \varphi^{-n}\circ h^u\circ \varphi^{2n}\circ h^s \circ \varphi^{-n}(\varphi^i(y)).
\end{equation}
Consequently, we obtain the important fact that
\begin{equation}\label{eq:KK_1_lift_convergence_3}
\varphi^{-n}\circ h^u\circ \varphi^{2n}\circ h^s \circ \varphi^{-n+i}(y)=[\varphi^{-n}\circ h^u\circ \varphi^{n+i}(y), \varphi^n\circ h^s\circ \varphi^{-n+i}(y)].
\end{equation}
In addition, for every $j\in \{1,\ldots, \# \mathcal{R}_{2m}^\delta\}$ with $f_{2m,j}(\varphi^{-r}(y))\neq 0$, we claim that
\begin{equation}\label{eq:KK_1_lift_convergence_5}
\begin{split}
\varphi^n\circ h^s\circ \varphi^{-n+i}[\varphi^r(g_{2m,j}),y]&=[\varphi^{r+i}(g_{2m,j}),\varphi^n\circ h^s\circ \varphi^{-n+i}(y)],\\
\varphi^{-n}\circ h^u\circ \varphi^{n+i}[y,\varphi^r(g_{2m,j})]&=[\varphi^{-n}\circ h^u\circ \varphi^{n+i}(y),\varphi^{r+i}(g_{2m,j})].
\end{split}
\end{equation}
We now prove the stable case, and the unstable case is similar. We have that both $\varphi^{-r}(y),\, g_{2m,j}$ lie in $R_{2m,j}^{\delta}$, and since $|r+i|\leq 2m-1$ it holds that $$\varphi^i[\varphi^r(g_{2m,j}),y]=[\varphi^{r+i}(g_{2m,j}),\varphi^{i}(y)].$$ For brevity denote $x=\varphi^{r+i}(g_{2m,j}),\, z=\varphi^{i}(y).$ For every $0\leq \ell \leq n-N$ we have that
\begin{align*}
d(\varphi^{\ell+N-n}(z),\varphi^{\ell+N}\circ h^s \circ \varphi^{-n}(z))&\leq \lambda^{-\ell}d(\varphi^{N-n}(z),\varphi^{N}\circ h^s \circ \varphi^{-n}(z))\\
&= \lambda^{-\ell}d(\varphi^{N-n}(z),[\varphi^{N-n}(z),\varphi^N(v)])\\
&\leq \lambda^{-\ell}\varepsilon_X/2,
\end{align*}
and also that $d(\varphi^{\ell+N-n}(z),\varphi^{\ell+N-n}[x,z])\leq \lambda^{\ell+N-n}\varepsilon_X/2.$ Therefore, for all $0\leq \ell \leq n-N$ it holds that $$d(\varphi^{\ell+N-n}[x,z], \varphi^{\ell+N}\circ h^s \circ \varphi^{-n}(z))\leq \varepsilon_X,$$ and in addition 
\begin{align*}
[\varphi^{\ell+N-n}[x,z],\varphi^{\ell+N}\circ h^s \circ \varphi^{-n}(z)]&=\varphi^{\ell}[\varphi^{N-n}[x,z],[\varphi^{N-n}(z), \varphi^N(v)]]\\
&=\varphi^{\ell}[\varphi^{N-n}[x,z],\varphi^N(v)].
\end{align*}
For $\ell=n-N$ we obtain that 
\begin{align*}
[x,\varphi^n\circ h^s \circ \varphi^{-n}(z)]&=[[x,z], \varphi^n\circ h^s \circ \varphi^{-n}(z)]\\
&= \varphi^{n-N}[\varphi^{N-n}[x,z],\varphi^N(v)]\\
&= \varphi^n\circ h^s \circ \varphi^{-n}[x,z],
\end{align*}
and this completes the proof of the claim. In fact, we can assume that $n_3$ is slightly larger (and still independent of $y$) so that, 
\begin{equation}\label{eq:KK_1_lift_convergence_6}
\begin{split}
[\varphi^{-n}\circ h^u\circ \varphi^{n+i}(y),\varphi^{r+i}(g_{2m,j})]&\in X^u(\varphi^{r+i}(g_{2m,j}),\varepsilon_X/2),\\
[\varphi^{r+i}(g_{2m,j}),\varphi^n\circ h^s\circ \varphi^{-n+i}(y)]&\in X^s(\varphi^{r+i}(g_{2m,j}),\varepsilon_X/2). 
\end{split}
\end{equation}
This last condition allows us to use the adjoint $\iota_{2m,r+i}^*$ on $(\alpha_u^{-n}(b) u^{i} \otimes \alpha_s^n(a) u^{i})\iota_{2m,r}(\delta_y)$ which (from the proof so far) is a linear combination of the basis vectors $$\delta_{[\varphi^{-n}\circ h^u\circ \varphi^{n+i}(y),\varphi^{r+i}(g_{2m,j})]}\otimes \delta_{[\varphi^{r+i}(g_{2m,j}),\varphi^n\circ h^s\circ \varphi^{-n+i}(y)]},$$ where $j\in \{1,\ldots, \# \mathcal{R}_{2m}^\delta\}$ with $f_{2m,j}(\varphi^{-r}(y))\neq 0$. For these $j$ we have that the expression $$\iota_{2m,r+i}^*(\delta_{[\varphi^{-n}\circ h^u\circ \varphi^{n+i}(y),\varphi^{r+i}(g_{2m,j})]}\otimes \delta_{[\varphi^{r+i}(g_{2m,j}),\varphi^n\circ h^s\circ \varphi^{-n+i}(y)]})$$ is equal to $$f_{2m,j}(\varphi^{-(r+i)}[\varphi^{-n}\circ h^u\circ \varphi^{n+i}(y), \varphi^n\circ h^s\circ \varphi^{-n+i}(y)])\delta_{[\varphi^{-n}\circ h^u\circ \varphi^{n+i}(y), \varphi^n\circ h^s\circ \varphi^{-n+i}(y)]},$$ which is equivalently written as $$f_{2m,j}(\varphi^{-(r+i)}\circ \varphi^{-n}\circ h^u\circ \varphi^{2n}\circ h^s \circ \varphi^{-n+i}(y))\delta_{\varphi^{-n}\circ h^u\circ \varphi^{2n}\circ h^s \circ \varphi^{-n+i}(y)}.$$ 

A remark is in order. In general, the points $\varphi^{-n}\circ h^u\circ \varphi^{n+i}(y),\, \varphi^n\circ h^s\circ \varphi^{-n+i}(y)$ might not be in the same rectangle with $\varphi^{r+i}(g_{2m,j})$, namely the rectangle $\varphi^{r+i}(R_{2m,j}^{\delta})$. In turn, this might lead to $$[\varphi^{-n}\circ h^u\circ \varphi^{n+i}(y), \varphi^n\circ h^s\circ \varphi^{-n+i}(y)]\not \in \varphi^{r+i}(R_{2m,j}^{\delta}),$$ and since $f_{2m,j}(\varphi^{-(r+i)}(x))\neq 0$ if and only if $x\in \varphi^{r+i}(R_{2m,j}^{\delta})$, we will have that $$\iota_{2m,r+i}^*(\delta_{[\varphi^{-n}\circ h^u\circ \varphi^{n+i}(y),\varphi^{r+i}(g_{2m,j})]}\otimes \delta_{[\varphi^{r+i}(g_{2m,j}),\varphi^n\circ h^s\circ \varphi^{-n+i}(y)]})=0.$$ However, this is exactly what the slow-down sequence $(\gamma_{n})_{n\in \mathbb N}$ fixes; it forces the index $2m$ to go slower to infinity than $n$. In this way, the refining process of $(\mathcal{R}_n^{\delta})_{n\in \mathbb N}$ has a lag so that both $\varphi^{-n}\circ h^u\circ \varphi^{n+i}(y),\, \varphi^n\circ h^s\circ \varphi^{-n+i}(y)$ manage to converge to $\varphi^i(y)$ fast enough so that they tend to lie in the same rectangle(s) with $\varphi^i(y)$, and hence (eventually) with $\varphi^{r+i}(g_{2m,j})$. This can be achieved by using the Lebesgue covering numbers of the $\delta$-enlarged Markov partitions which can be explicitly described using the self-similarity of the metric $d$, see Theorem \ref{thm:theoremgraphSmalespaces}.

At this point let us summarise what we have proved so far. Let $n\geq n_3$ and $y\in X^h(P,Q)$ so that $\alpha_u^{-n}(b)\alpha_s^n(a)u^i \delta_y \neq 0$ or $(\alpha_u^{-n}(b) u^{i} \otimes \alpha_s^n(a) u^{i})\iota_{2m,r}(\delta_y)\neq 0$. Then, the following expressions are well-defined 
\begin{align*}
y_1^s&=\varphi^{-n+i}(y)\\
y_2^s&=h^s\circ \varphi^{-n+i}(y)\\
y_1^u&=\varphi^{2n}\circ h^s \circ \varphi^{-n+i}(y)\\
y_2^u&=h^u\circ \varphi^{2n}\circ h^s \circ \varphi^{-n+i}(y)\\
y^{s,u}&=\varphi^{-n}\circ h^u\circ \varphi^{2n}\circ h^s \circ \varphi^{-n+i}(y),
\intertext{and for every $j\in \{1,\ldots, \# \mathcal{R}_{2m}^\delta\}$ with $f_{2m,j}(\varphi^{-r}(y))\neq 0$, the same holds for the expressions}
y_{1,j}^s&=\varphi^{-n+i}[\varphi^r(g_{2m,j}),y]\\
y_{2,j}^s&=h^s\circ \varphi^{-n+i}[\varphi^r(g_{2m,j}),y]\\
y_{1,j}^u&=\varphi^{n+i}[y,\varphi^r(g_{2m,j})]\\
y_{2,j}^u&=h^u\circ \varphi^{n+i}[y,\varphi^r(g_{2m,j})].
\end{align*}
Moreover, the operator $\iota_{2m,r+i}^*(\alpha_u^{-n}(b)u^i\otimes \alpha_s^n(a)u^i)\iota_{2m,r}-\alpha_u^{-n}(b)\alpha_s^n(a)u^i $ evaluated on the basis vector $\delta_y$ is given by $$(\sum_{j}f_{2m,j}(\varphi^{-r}(y))f_{2m,j}(\varphi^{-r-i}(y^{s,u}))b(y_{2,j}^u,y_{1,j}^u)a(y_{2,j}^s,y_{1,j}^s)-b(y_2^u,y_1^u)a(y_2^s,y_1^s))\delta_{y^{s,u}},$$ where the sum is taken over the $j\in \{1,\ldots, \# \mathcal{R}_{2m}^\delta\}$ with $f_{2m,j}(\varphi^{-r}(y))\neq 0$. As a result, for $n\geq n_3$, we have that $$\|\iota_{2m,r+i}^*(\alpha_u^{-n}(b)u^i\otimes \alpha_s^n(a)u^i)\iota_{2m,r}-\alpha_u^{-n}(b)\alpha_s^n(a)u^i\|$$ $$=\sup_y |\sum_{j}f_{2m,j}(\varphi^{-r}(y))f_{2m,j}(\varphi^{-r-i}(y^{s,u}))b(y_{2,j}^u,y_{1,j}^u)a(y_{2,j}^s,y_{1,j}^s)-b(y_2^u,y_1^u)a(y_2^s,y_1^s)|,$$ where the supremum is taken over the $y\in X^h(P,Q)$ such that $\alpha_u^{-n}(b)\alpha_s^n(a)u^i \delta_y \neq 0$ or $(\alpha_u^{-n}(b) u^{i} \otimes \alpha_s^n(a) u^{i})\iota_{2m,r}(\delta_y)\neq 0$.

By writing $f_{2m,j}(\varphi^{-r-i}(y^{s,u}))$ as $f_{2m,j}(\varphi^{-r-i}(y^{s,u}))-f_{2m,j}(\varphi^{-r}(y))+f_{2m,j}(\varphi^{-r}(y))$, we first consider $$|\sum_{j}f_{2m,j}(\varphi^{-r}(y))(f_{2m,j}(\varphi^{-r-i}(y^{s,u}))-f_{2m,j}(\varphi^{-r}(y)))b(y_{2,j}^u,y_{1,j}^u)a(y_{2,j}^s,y_{1,j}^s)|$$ $$\leq \|b\|_{\infty}\|a\|_{\infty}\sum_j|f_{2m,j}(\varphi^{-r-i}(y^{s,u}))-f_{2m,j}(\varphi^{-r}(y))|.$$ Following Theorem \ref{thm:theoremgraphSmalespaces}, the number of $j\in \{1,\ldots, \# \mathcal{R}_{2m}^\delta\}$ with $f_{2m,j}(\varphi^{-r}(y))\neq 0$ is at most $(\# \mathcal{R}_1^\delta)^2$. Also, for such $j$ it holds that 
\begin{align*}
|f_{2m,j}(\varphi^{-r-i}(y^{s,u}))-f_{2m,j}(\varphi^{-r}(y))|&\leq |f_{2m,j}^2(\varphi^{-r-i}(y^{s,u}))-f_{2m,j}^2(\varphi^{-r}(y))|^{1/2}\\
&\leq \Lip(f_{2m,j}^2\circ \varphi^{-r})^{1/2}d(\varphi^{-i}(y^{s,u}),y)^{1/2}.
\end{align*}
Again from Theorem \ref{thm:theoremgraphSmalespaces}, since the metric $d$ is self-similar, there is an $0<\eta\leq \varepsilon_X$ so that $\Leb(\mathcal{R}_{2m}^{\delta})\geq \lambda^{-2m+1}\eta$. Then, using the fact that $\varphi$ is $\lambda$-bi-Lipschitz and that $|r|\leq m\leq 2\gamma_{2n+k}< n/4+k/8+2$, from Proposition \ref{prop:Lip_pou_con} we obtain a constant $K_1>0$ that $$\Lip(f_{2m,j}^2\circ \varphi^{-r})<K_1\lambda^{3n/4}.$$ Moreover, from (\ref{eq:KK_1_lift_convergence_1}) we have that $d(y^{s,u},\varphi^i(y))\leq K_0\lambda^{-n}$ and hence $$d(\varphi^{-i}(y^{s,u}),y)\leq K_0\lambda^{|i|}\lambda^{-n}.$$ Consequently, it holds that 
\begin{equation}\label{eq:KK_1_lift_convergence_7}
|\sum_{j}f_{2m,j}(\varphi^{-r}(y))(f_{2m,j}(\varphi^{-r-i}(y^{s,u}))-f_{2m,j}(\varphi^{-r}(y)))b(y_{2,j}^u,y_{1,j}^u)a(y_{2,j}^s,y_{1,j}^s)|< K_2\lambda^{-n/8},
\end{equation}
where $K_2=\|b\|_{\infty}\|a\|_{\infty}(\# \mathcal{R}_1^\delta)^2 (K_0K_1\lambda^{|i|})^{1/2}.$

For the other term of the triangle inequality we use the fact that $$\sum_{j}f_{2m,j}(\varphi^{-r}(y))^2=1,$$ and consider $$|\sum_{j}f_{2m,j}(\varphi^{-r}(y))^2(b(y_{2,j}^u,y_{1,j}^u)a(y_{2,j}^s,y_{1,j}^s)-b(y_2^u,y_1^u)a(y_2^s,y_1^s))|$$ $$\leq \sum_{j}f_{2m,j}(\varphi^{-r}(y))^2|b(y_{2,j}^u,y_{1,j}^u)a(y_{2,j}^s,y_{1,j}^s)-b(y_2^u,y_1^u)a(y_2^s,y_1^s)|.$$ Since $a,b$ are Lipschitz, for every $j$ we have that 
\begin{align*}
|a(y_{2,j}^s,y_{1,j}^s)-a(y_2^s,y_1^s)|&\leq \Lip(a)D_{s,d}((y_{2,j}^s,y_{1,j}^s), (y_2^s,y_1^s)),\\
|b(y_{2,j}^u,y_{1,j}^u)-b(y_2^u,y_1^u)|&\leq \Lip(b)D_{u,d}((y_{2,j}^u,y_{1,j}^u),(y_2^u,y_1^u)).
\end{align*}
To estimate $D_{s,d}((y_{2,j}^s,y_{1,j}^s), (y_2^s,y_1^s))$ recall that $y_1^s=\varphi^{-n+i}(y),\, y_2^s=h^s\circ \varphi^{-n+i}(y)$ and also $y_{1,j}^s=\varphi^{-n+i}[\varphi^r(g_{2m,j}),y],\, y_{2,j}^s=h^s\circ \varphi^{-n+i}[\varphi^r(g_{2m,j}),y]$. Moreover, note that $n$ is large enough so that \[
\varphi^n(y_1^s)\in X^s(\varphi^n(y_2^s),\varepsilon_X'/2) \quad \text{ and } \quad  \varphi^n(y_{1,j}^s)\in X^s(\varphi^n(y_{2,j}^s),\varepsilon_X'/2).
\]
We aim to show that $(y_{2,j}^s,y_{1,j}^s)$ lies in a sufficiently small bisection around $(y_2^s,y_1^s)$, and this requires to be precise about the distance of $y_1^s$ with $y_{1,j}^s$. We have that both $y,\, \varphi^r(g_{2m,j})$ lie in the rectangle $\varphi^{r}(R_{2m,j}^{\delta})\in \varphi^r(\mathcal{R}_{2m}^{\delta})$. From Theorem \ref{thm:theoremgraphSmalespaces} we see that the cover $\varphi^r(\mathcal{R}_{2m}^{\delta})$ refines $\mathcal{R}_{\gamma_{2n+k}}^{\delta}$ and using the fact that $\gamma_{2n+k}\geq n/8+k/16$, we have that 
\[
\text{diam}(\varphi^r(R_{2m,j}^{\delta}))\leq K_3\lambda^{-n/8},
\]
where $K_3=\lambda^{1-k/16}\varepsilon_X.$ Assuming that $n$ is sufficiently large so that $K_3\lambda^{-n/8}<\varepsilon_X$, one has that $[\varphi^r(g_{2m,j}),y]\in X^u(y,K_3\lambda^{-n/8})$ and hence $$\varphi^{-n+i}[\varphi^r(g_{2m,j}),y]\in X^u(\varphi^{-n+i}(y), \lambda^{i}K_3\lambda^{-n-n/8}).$$ Now, assuming that $n$ is a bit larger so that $\lambda^{i}K_3\lambda^{-n/8}<\varepsilon_X'/2$, we can consider the bisection $V^s(y_2^s,y_1^s,\lambda^{i}K_3\lambda^{-n-n/8},n)$ and it is straightforward to show that $$(y_{2,j}^s,y_{1,j}^s)\in V^s(y_2^s,y_1^s,\lambda^{i}K_3\lambda^{-n-n/8},n).$$ Then, using \cite[Proposition 6.5]{Gero2} we can find a constant $K_4>0$ (independent of the points $(y_{2,j}^s,y_{1,j}^s), (y_2^s,y_1^s)$ and $n$)  so that $$D_{s,d}((y_{2,j}^s,y_{1,j}^s), (y_2^s,y_1^s))\leq K_4 2^{-n/(8\lceil \log_{\lambda}3 \rceil)}.$$ Working in exactly the same way, but assuming $n$ is slightly larger than before, we can find a constant $K_5>0$ so that $$D_{u,d}((y_{2,j}^u,y_{1,j}^u),(y_2^u,y_1^u))\leq K_5 2^{-n/(8\lceil \log_{\lambda}3 \rceil)}.$$ To conclude, there is a constant $K_6>0$ (independent of $y$ and $n$) so that 
\begin{equation}\label{eq:KK_1_lift_convergence_8}
|\sum_{j}f_{2m,j}(\varphi^{-r}(y))^2(b(y_{2,j}^u,y_{1,j}^u)a(y_{2,j}^s,y_{1,j}^s)-b(y_2^u,y_1^u)a(y_2^s,y_1^s))|\leq K_6 2^{-n/(8\lceil \log_{\lambda}3 \rceil)}.
\end{equation}

As a result, from (\ref{eq:KK_1_lift_convergence_7}) and (\ref{eq:KK_1_lift_convergence_8}), if $n_3$ is sufficiently large, for every $n\geq n_3$ we have that $$\|\iota_{2m,r+i}^*(\alpha_u^{-n}(b)u^i\otimes \alpha_s^n(a)u^i)\iota_{2m,r}-\alpha_u^{-n}(b)\alpha_s^n(a)u^i\|\leq C_2(\lambda^{-n/8}+2^{-n/(8\lceil \log_{\lambda}3 \rceil)}),$$ where $C_2=\max\{K_2,K_6\}$. If either $a$ or $b$ is not supported on a bisection, then the same result holds for large enough $C_2, n_3.$ This completes the proof.
\end{proof}

The next result is an application of Lemmas \ref{lem:KK_1_lift_orthogonality} and \ref{lem:KK_1_lift_convergence}.

\begin{lemma}\label{lem:KK_1_lift_convergence2}
Let $a\in \Lip_c(G^s(Q),D_{s,d}),\, b\in \Lip_c(G^u(P),D_{u,d})$ and $i,k,l\in \mathbb Z$. Then, there is a constant $C_3>0$ and $n_4\in \mathbb N$ so that, for every $n\geq n_4$, we have 
\[
\|V_{2n+l}^*(\alpha_u^{-n}(b) u^{i} \otimes \alpha_s^n(a) u^{i}) V_{2n+k} -\alpha_u^{-n}(b)\alpha_s^n(a)u^{i}\|\leq \frac{C_3}{n}.
\]
\end{lemma}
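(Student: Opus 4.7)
The plan is to expand both $V_{2n+l}$ and $V_{2n+k}$ via $V_m = W_{\gamma_{|m|}}$, $W_p = c_p^{-1}\sum_{m=p}^{2p}\theta_m$, and $\theta_m = \sum_{r=-m}^{m}\iota_{2m,r}$, then collapse the resulting quadruple sum using the orthogonality Lemma \ref{lem:KK_1_lift_orthogonality}, and approximate each surviving term by $\alpha_u^{-n}(b)\alpha_s^n(a)u^i$ via Lemma \ref{lem:KK_1_lift_convergence}. Writing $p = \gamma_{|2n+l|}$, $q = \gamma_{|2n+k|}$, and $X = \alpha_u^{-n}(b)u^i \otimes \alpha_s^n(a)u^i$, one obtains
\[
V_{2n+l}^* X V_{2n+k} = \frac{1}{c_p c_q}\sum_{m=p}^{2p}\sum_{m'=q}^{2q}\sum_{r=-m}^{m}\sum_{r'=-m'}^{m'}\iota_{2m,r}^*\, X\,\iota_{2m',r'}.
\]

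For $n$ larger than the $n_2$ of Lemma \ref{lem:KK_1_lift_orthogonality} (applied to $a,b,i$) and large enough that $p,q \ge |i|+1$, every term with $(2m,r)\ne (2m',r'+i)$ vanishes by Lemma \ref{lem:KK_1_lift_orthogonality}. Since $|p-q|$ is bounded by a constant depending only on $|l-k|$, I may assume without loss of generality that $p \le q \le 2p$ for large $n$. The surviving index set is then $\{(m,r')\colon q \le m \le 2p,\ |r'|\le m,\ |r'+i|\le m\}$; for $m \ge |i|$ the $r'$-range has cardinality $N_m = 2m+1-|i|$. By Lemma \ref{lem:KK_1_lift_convergence}, each surviving $\iota_{2m,r'+i}^* X \iota_{2m,r'}$ differs from $\alpha_u^{-n}(b)\alpha_s^n(a)u^i$ by an operator of norm at most $C_2(\lambda^{-n/8} + 2^{-n/(8\lceil \log_\lambda 3 \rceil)})$, so
\[
V_{2n+l}^* X V_{2n+k} = \lambda_n \cdot \alpha_u^{-n}(b)\alpha_s^n(a)u^i + R_n,
\]
where $\lambda_n = (c_p c_q)^{-1}\sum_{m=q}^{2p} N_m$ and $\|R_n\|$ is bounded, using that the total number of surviving pairs $(m,r')$ is comparable to $c_p c_q$, by a constant multiple of $C_2(\lambda^{-n/8} + 2^{-n/(8\lceil \log_\lambda 3 \rceil)})$.

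The arithmetic-progression identity $\sum_{m=q}^{2p}(2m+1-|i|) = (2p-q+1)(2p+q+1-|i|)$, combined with $c_p^2 = (p+1)(3p+1)$, the asymptotics $p \sim q \sim n/16$, and $|p-q| = O(1)$, gives $\lambda_n = 1 + O(1/n)$ by a direct calculation. Since $\|\alpha_u^{-n}(b)\alpha_s^n(a)u^i\| \le \|a\|\|b\|$ and $\|R_n\|$ is exponentially small in $n$, I conclude
\[
\bigl\|V_{2n+l}^* X V_{2n+k} - \alpha_u^{-n}(b)\alpha_s^n(a)u^i\bigr\| \le |\lambda_n - 1|\,\|a\|\|b\| + \|R_n\| \le \frac{C_3}{n}
\]
for a suitable constant $C_3 > 0$ and large enough $n_4 \in \mathbb N$.

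The main obstacle is to ensure, uniformly over all $(m,r,m',r')$ in the quadruple sum, that the large-$n$ hypotheses of Lemmas \ref{lem:KK_1_lift_orthogonality} and \ref{lem:KK_1_lift_convergence} are met simultaneously, so that only the ``diagonal'' terms $(m',r')=(m,r-i)$ contribute and each is exponentially close to $\alpha_u^{-n}(b)\alpha_s^n(a)u^i$. Once this uniform vanishing is established, the remaining work is the combinatorial estimate $\lambda_n = 1 + O(1/n)$, which is routine; the slight asymmetry between the ranges $[p,2p]$ and $[q,2q]$ only contributes bounded edge corrections because $|p-q|$ stays bounded as $n \to \infty$.
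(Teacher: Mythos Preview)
Your proposal is correct and follows essentially the same approach as the paper's proof: expand the quadruple sum, use Lemma~\ref{lem:KK_1_lift_orthogonality} to kill all off-diagonal terms (your surviving index set $\{(m,r'):q\le m\le 2p,\ |r'|\le m,\ |r'+i|\le m\}$ is exactly the paper's), apply Lemma~\ref{lem:KK_1_lift_convergence} to each diagonal term, and then verify that the normalising constant $\lambda_n$ (the paper calls it $\zeta_n$) satisfies $1-\lambda_n=O(1/n)$. Your treatment of the bound on $\|R_n\|$ via $\lambda_n\le 1$ and of the arithmetic-progression estimate for $\lambda_n$ is slightly more explicit than the paper's, but the argument is the same.
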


\begin{proof}
Without loss of generality we can assume that $l\leq k$ and $i\geq 0$. Let us consider $n\in \mathbb N$ large enough so that $2n+k,\, 2n+l>0,\,\gamma_{2n+k}\geq i+1$. Then, it holds that $V_{2n+l}^*(\alpha_u^{-n}(b) u^{i} \otimes \alpha_s^n(a) u^{i}) V_{2n+k}$ is equal to $$c_{\gamma_{2n+l}}^{-1}c_{\gamma_{2n+k}}^{-1}\sum_{m'=\gamma_{2n+l}}^{2\gamma_{2n+l}}\sum_{r'=-m'}^{m'}\sum_{m=\gamma_{2n+k}}^{2\gamma_{2n+k}}\sum_{r=-m}^{m}\iota_{2m',r'}^*(\alpha_u^{-n}(b) u^{i} \otimes \alpha_s^n(a) u^{i})\iota_{2m,r}.$$ From the \enquote{orthogonality} Lemma \ref{lem:KK_1_lift_orthogonality} we can find $n_2\in \mathbb N$ so that, for every $n\geq n_2$, we have $\gamma_{2n+k}\leq 2\gamma_{2n+l}$ and the last expression is equal to $$c_{\gamma_{2n+l}}^{-1}c_{\gamma_{2n+k}}^{-1}\sum_{m=\gamma_{2n+k}}^{2\gamma_{2n+l}}\sum_{r=-m}^{m-i}\iota_{2m,r+i}^*(\alpha_u^{-n}(b) u^{i} \otimes \alpha_s^n(a) u^{i})\iota_{2m,r}.$$ At this point, let us consider the numbers $\zeta_n\in (0,1]$ given by $$\zeta_n=c_{\gamma_{2n+l}}^{-1}c_{\gamma_{2n+k}}^{-1}\sum_{m=\gamma_{2n+k}}^{2\gamma_{2n+l}}\sum_{r=-m}^{m-i}1,$$ and with elementary computations one can see that there is a constant $K>0$ such that $1-\zeta_n\leq K/n$. Then, we have that 
\begin{equation}\label{eq:KK_1_lift_convergence2_1}
\|(1-\zeta_n)\alpha_u^{-n}(b)\alpha_s^n(a)u^{i}\|\leq \frac{K\|b\|\|a\|}{n}.
\end{equation}
From Lemma \ref{lem:KK_1_lift_convergence}, there is a constant $C_2>0$ and $n_3\in \mathbb N$ so that, if in addition $n\geq n_3$ then 
\begin{equation}\label{eq:KK_1_lift_convergence2_2}
\|V_{2n+l}^*(\alpha_u^{-n}(b) u^{i} \otimes \alpha_s^n(a) u^{i}) V_{2n+k} -\zeta_n \alpha_u^{-n}(b)\alpha_s^n(a)u^{i}\|\leq C_2(\lambda^{-n/8}+2^{-n/(8\lceil \log_{\lambda}3 \rceil)}).
\end{equation}
The proof follows from (\ref{eq:KK_1_lift_convergence2_1}) and (\ref{eq:KK_1_lift_convergence2_2}).
\end{proof}
 
\begin{proof}[Proof of Theorem \ref{thm:KK_1_lift_Ruelle}]

Recall the representations $\overline{\rho_s}:\mathcal{R}^s(Q)\to \mathcal{B}(\mathscr{H}\otimes \ell^2(\mathbb Z))$ and $\overline{\rho_u}:\mathcal{R}^u(P)\to \mathcal{B}(\mathscr{H}\otimes \ell^2(\mathbb Z))$ in \eqref{eq:inflatedstableRuelle} and \eqref{eq:inflatedusntableRuelle}, which commute modulo compacts and whose product in the Calkin algebra gives $\tau_{\Delta}$. Also, recall the unitary $U=\bigoplus_{n\in \mathbb Z} u^{-\lfloor n/2 \rfloor}$ and the symmetrised extension $\ad_{\pi}(U)\circ \tau_{\Delta}$ in (\ref{eq:tau_Delta_sym}) which on $x\in \mathcal{R}^s(Q)\otimes_{\text{alg}} \mathcal{R}^u(P)$ is given by
\begin{equation}\label{eq:KK_1_lift_eq_2}
(\ad_{\pi}(U)\circ \tau_{\Delta})(x)=(\ad(U)\circ (\overline{\rho_s}\cdot \overline{\rho_u}))(x)+\mathcal{K}(\mathscr{H}\otimes \ell^2(\mathbb Z)).
\end{equation}
In \cite[Proposition 7.9]{Gero2}, it is proved that for every $p>2\ent(\varphi)\lceil \log_{\lambda}3 \rceil/ \log 2$, the algebras $\overline{\rho_s}(\Lambda_{s,d}(Q,\alpha_s))$ and $\overline{\rho_u}(\Lambda_{u,d}(P,\alpha_u))$ commute modulo the Schatten $p$-ideal 
$\mathcal{I}=\mathcal{L}^p(\mathscr{H}\otimes \ell^2(\mathbb Z))$. Therefore, for every $x,y\in \Lambda_{s,d}(Q,\alpha_s)\otimes_{\text{alg}}\Lambda_{u,d}(P,\alpha_u)$, keeping in mind that $U\in \mathcal{B}(\mathscr{H}\otimes \ell^2(\mathbb Z))$, we have that 
\begin{equation}\label{eq:KK_1_lift_eq_3}
\begin{split}
(\ad(U)\circ(\overline{\rho_s}\cdot \overline{\rho_u}))(xy)-(\ad(U)\circ(\overline{\rho_s}\cdot \overline{\rho_u}))(x)(\ad(U)\circ(\overline{\rho_s}\cdot \overline{\rho_u}))(y)&\in \mathcal{I}\\
\text{and}\enspace (\ad(U)\circ(\overline{\rho_s}\cdot \overline{\rho_u}))(x^*)-(\ad(U)\circ(\overline{\rho_s}\cdot \overline{\rho_u}))(x)^*&\in \mathcal{I}.
\end{split}
\end{equation}
Consequently, the extension $\ad_{\pi}(U)\circ \tau_{\Delta}$ is $p$-smooth on $\Lambda_{s,d}(Q,\alpha_s)\otimes_{\text{alg}}\Lambda_{u,d}(P,\alpha_u)$.

Further, from Lemma \ref{lem:KK_1_lift_ranks}, Lemma \ref{lem:KK_1_lift_convergence2} and \cite[Lemma 7.4]{Gero2} we have that for all $x\in \Lambda_{s,d}(Q,\alpha_s)\otimes_{\text{alg}}\Lambda_{u,d}(P,\alpha_u)$:
\begin{equation}\label{eq:KK_1_lift_eq_4}
V^*\rho(x)V-(\ad(U)\circ (\overline{\rho_s}\cdot \overline{\rho_u}))(x)\in \Li (\mathscr{H}\otimes \ell^2(\mathbb Z)).
\end{equation}
Then, following Theorem \ref{thm:Liftingtheorem}, the triple $(\mathscr{H}\otimes \mathscr{H}\otimes \ell^2(\mathbb Z), \rho, 2V V^*-1)$ defines an odd Fredholm module over $\mathcal{R}^s(Q)\otimes \mathcal{R}^u(P)$. In particular, it is $\theta$-summable on $\Lambda_{s,d}(Q,\alpha_s)\otimes_{\text{alg}}\Lambda_{u,d}(P,\alpha_u)$ and represents the extension class $[\ad_{\pi}(U)\circ \tau_{\Delta}]=[\tau_{\Delta}]$, hence represents the fundamental class $\Delta$ from \cite{KPW}.
\end{proof}

\begin{Acknowledgements}
The first author would like to thank Siegfried Echterhoff, Heath Emerson, James Gabe, Magnus Goffeng, Bram Mesland, Ian Putnam and Christian Voigt for many useful conversations on index theory. Also, this author thanks EPSRC (grants NS09668/1, M5086056/1) and the London Mathematical Society together with the Heilbronn Institute for Mathematical Research (Early Career Fellowship) for supporting this research.
\end{Acknowledgements}

\noindent
Conflict of Interest Statement: On behalf of all authors, the corresponding author states that there is no conflict of interest.

\end{document}